\author{Claire Amiot}
\address{Institut Fourier, 100 rue des maths, 38402 Saint Martin d'H\`eres}
\email{claire.amiot@ujf-grenoble.fr}
\numberwithin{equation}{subsection}
\numberwithin{figure}{section}
\newtheorem{theorem}{Theorem}[section]
\newtheorem{lemma}[theorem]{Lemma}
\newtheorem{corollary}[theorem]{Corollary}
\newtheorem{proposition}[theorem]{Proposition}
\theoremstyle{remark}
\newtheorem{remark}[theorem]{Remark}
\theoremstyle{definition}
\newtheorem{definition}[theorem]{Definition}
\newcommand{\intS}{S\backslash \partial S}
\DeclareMathOperator{\Jac}{J}
\DeclareMathOperator{\MCG}{\mathcal{M}}
\DeclareMathOperator{\Homeo}{Homeo}
\renewcommand{\mod}{\operatorname{mod}}
\thanks{2010 {\em Mathematics Subject Classification.} 16E35,16G20, 14F35}
\keywords{Quiver representations, Derived categories, Triangulations of surfaces, Cluster combinatorics, Quiver mutation}
\title[The derived category of surface algebras]{The derived category of surface algebras: the case of the torus with one boundary component}
\begin{document}
\maketitle

\begin{abstract}
In this paper we refine the main result of a previous paper of the author with Grimeland on derived invariants of surface algebras. We restrict to the case where the surface is a torus with one boundary component and  give an easily computable derived invariant for such surface algebras. This result permits to give answers to open questions on gentle algebras: it provides examples of gentle algebras with the same AG-invariant (in the sense of Avella-Alaminos and Geiss) that are not derived equivalent and gives a partial positive answer to a conjecture due to Bobi\'nski and Malicki on gentle $2$-cycle algebras.

\end{abstract}

\tableofcontents
\section{Introduction}

Let $k$ be a field and $\Lambda$ be a finite dimensional $k$-algebra. The bounded derived category $\mathcal{D}^b(\mod \Lambda)$ is an essential tool in representation theory: this triangulated category encodes homological information about the algebra $\Lambda$ and about its module category. Therefore it is of great interest to determine when two given algebras are derived equivalent (i.e. have equivalent derived categories). This question has lead to one very important tool in representation theory: tilting theory, and has been studied a lot since the 80's (see \cite{Hap88}, or \cite{AHHK} for a broad overview of the subject).

In \cite{AG}, we focus on a very special class of algebras called surface algebras. These algebras are constructed from combinatorial datas: a triangulation $\Delta$ of an oriented compact surface $S$ with marked points $M$ on its boundary and an amdissible cut (see Definition \ref{def admissible cut}). From this data one defines a function $d:\pi_1^{\rm free}(S)\to \mathbb Z$, where $\pi_1^{\rm free}(S)$ is the set of loops on $S$ modulo free homotopy. The main result of \cite{AG} (Theorem~\ref{thmAG}) asserts that two algebras $\Lambda=(S,M,\Delta,d)$ and $\Lambda'=(S',M',\Delta',d')$ are derived equivalent if and only if there exists a homeomorphism $\Phi:S\to S'$ such that $d=d'\circ\Phi$ as functions on $\pi_1^{\rm free}(S)$. This result is very simple to state but not so easy to use in practice since for genus $\geq 1$ the mapping class group of $S$ and its action on $\pi_1^{\rm free}(S)$ are difficult to understand. 

In this paper, we restrict to the case of surface algebras coming from the torus with one boundary component. In that special case, we have a good understanding of the mapping class group of the surface and of its action on the degree map $d$. Using topological and combinatorial arguments we prove that $d=d'\circ\Phi$ if and only if $\gcd(d(a),d(b))=\gcd(d'(a),d'(b))$ where $a$ and $b$ are generators of the fundamental group of the surface. As a consequence, we obtain answers to open questions: we exhibit gentle algebras with the same AG-invariant (in the sense of \cite{AAG}) that are not derived equivalent (Corollary \ref{cor AG}). We also prove a part of a conjecture by Bobi\'nski and Malicki \cite{BM} on gentle $2$-cycles algebras (Proposition \ref{propgentle2}). 

\subsection*{Acknowledgment} The author thanks Fran\c cois Dahmani for helpful discussions on mapping class groups and Luis Paris for pointing out a mistake in a previous version (see Remark \ref{remark Paris}), and the reference \cite{PR}.

\section{Preliminaries}

In this section we recall the definition of surface algebras, and give some basic topological properties that are needed in the proof of the main result. 

\subsection{Surface algebras}

Let $S$ be an oriented connected closed compact surface of genus $g$, with $b$ boundary components and $M$ be a finite set of marked points on the boundary of $S$, such that there is at least one marked point on each component of the boundary.  By an \emph{arc}, we mean the isotopy class of a simple curve in the interior of $S$ with endpoints being in $M$ that does not cut out a monogon or a digon of the surface. A pair of arcs is called \emph{admissible} if there exist representants in their respective isotopy class that do not intersect in the interior of $S$. A \emph{triangulation} of $(S,M)$ is a maximal collection of pairwise admissible arcs.

Fomin, Shapiro and Thurston associate a quiver $Q^\Delta$ (via signed adjacency matrices \cite[Definition 4.1]{FST}) to each triangulation $\Delta$ as follows: the set of vertices $Q^\Delta_0$ of $Q^\Delta$ is the set of arcs of $\Delta$, and the set of arrows $Q^\Delta_1$ is in bijection with the set of oriented internal angles between two arcs of $\Delta$. We also denote by $Q^\Delta_2$ the set of oriented internal triangles of $\Delta$, that are the triangles whose three sides are arcs (and not boundary segments) of the triangulation. For each such triangle $\tau\in \Delta$, there exist three arrows $\alpha_\tau$, $\beta_\tau$ and $\gamma_\tau$. The Jacobian algebra associated to $\Delta$ has been introduced in \cite{ABCP}. It is defined as $$ \Jac (\Delta):= kQ^\Delta/ \langle \alpha_\tau\beta_\tau,\ \beta_\tau\gamma_\tau,\ \gamma_\tau\alpha_\tau,\  \forall \tau\in Q^\Delta_2\rangle.$$

\begin{definition}\cite[Def. 2.8]{AG}\label{def admissible cut}
A \emph{degree-$1$ map} of $\Delta$ is a map $d:Q_1\to \mathbb Z$ such that for each triangle $\tau$ in $Q_2$, $d(\alpha_\tau)+d(\beta_\tau)+d(\gamma_\tau)=1$.
An \emph{admissible cut} of $\Delta$ is  a degree-$1$ map $d:Q_1\to \{0,1\}$ such that $d(\alpha)=0$ for each angle $\alpha$ in a triangle which is not internal. 

An admissible cut defines a positive $\mathbb Z$-grading on the Jacobian algebra $\Jac(\Delta)$. The \emph{surface algebra} (first introduced in \cite{DRS}) $\Lambda:=(\Delta,d)$  associated to $\Delta$ and $d$, is the degree zero subalgebra of $\Jac(\Delta,d)$. 
\end{definition}

Note that given a graded triangulation $(\Delta,d)$ (that is a triangulation with a degree-$1$ map $d$) and an arc $i$ of $\Delta$, on can flip it to obtain another graded triangulation denoted by $\mu_i^L(\Delta,d)$ (see \cite[Def 2.13]{AG}). 

\subsection{Fundamental group and free homotopy}

A \emph{loop} in $S$ is  a continuous map $[0,1]\to S\backslash \partial S$ with $f(0)=f(1)$. Two loops $f$ and $g$ are \emph{free homotopic} if there exists a continuous map $$h:[0,1]\times[0,1]\to \intS$$ such that  $h(0,-)=f$, $h(1,-)=g$ and $h(-,0)=h(-,1)$. We denote by $\pi_1^{\rm free}(S)$ the set of loops modulo free homotopy. Note that if $f$ is a loop in $S$ and $t_0\in [0,1]$, then the loop $g$ defined by 
\begin{equation}\label{free htp rmk} g(t)=\left\{\begin{array}{ll} f(t+t_0) & \textrm{if } 0\leq t\leq 1-t_0\\ f(t+t_0-1) & \textrm{if } 1-t_0\leq t\leq 1\end{array}\right. \end{equation}is free homotopic to $f$.

For $x_0$ a point in $S\backslash \partial S$, we denote by $\pi_1(S,x_0)$ the \emph{fundamental group} of $S$ with base point $x_0$, that is the set of  loops with base point $x_0$ modulo homotopy among loops based in $x_0$. Since the surface is connected the group $\pi_1(S,x_0)$ does not depend on the choice of $x_0$, so we will often denote it by $\pi_1(S)$. If two loops based in $x_0$ are homotopic, they obviously are free homotopic, so there is a natural map $\pi_1(S,x_0)\to\pi_1^{\rm free}(S)$. However the converse is not true, for instance if $f$ and $g$ are loops based in $x_0$, then $f*g$ is not homotopic to $g*f$ in general, but these two loops are free homotopic by (\ref{free htp rmk}). Hence the product of the group $\pi_1(S,x_0)$ is not well defined at the level of $\pi_1^{\rm free}(S)$, which does not have any group structure. In fact we have the following classical result (see 1.2.1 of \cite{FM}) for which we give a proof for the convenience of the reader.
\begin{lemma}
The set $\pi_1^{\rm free}(S)$ is in natural bijection with the set of conjugacy classes of $\pi_1(S,x_0)$. 
\end{lemma}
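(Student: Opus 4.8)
The plan is to construct the bijection explicitly and then verify that it is well defined and bijective in both directions. The map $\pi_1(S,x_0)\to\pi_1^{\rm free}(S)$ described in the excerpt sends a based loop to its free homotopy class; the key claim is that this map is constant on conjugacy classes and induces the desired bijection on the quotient. So first I would show that two conjugate based loops are freely homotopic. Given $[f],[g]\in\pi_1(S,x_0)$ with $[g]=[h]^{-1}[f][h]$ for some based loop $h$, I want to see that $f$ and $g$ are freely homotopic. The cleanest way is to observe that $h^{-1}*f*h$ is freely homotopic to $f*(h*h^{-1})$ by the cyclic-permutation principle recorded in (\ref{free htp rmk}) (moving the initial segment $h^{-1}$ to the end), and $h*h^{-1}$ contracts to the constant loop, so $h^{-1}*f*h$ is freely homotopic to $f$. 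This shows the map factors through conjugacy classes, giving a well-defined map $\mathrm{Conj}(\pi_1(S,x_0))\to\pi_1^{\rm free}(S)$.

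Next I would prove surjectivity. Given any loop $\ell:[0,1]\to S\backslash\partial S$, since $S$ is path connected I can choose a path $\omega$ from $x_0$ to $\ell(0)$ inside $S\backslash\partial S$; then $\omega*\ell*\omega^{-1}$ is a based loop at $x_0$, and a straightforward free homotopy (sliding the basepoint along $\omega$) shows it is freely homotopic to $\ell$. Hence every free homotopy class contains a based loop, so the map is onto.

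The main obstacle, and the step needing the most care, is injectivity: if $f$ and $g$ are based loops at $x_0$ that are \emph{freely} homotopic, I must show they are conjugate in $\pi_1(S,x_0)$. Let $h:[0,1]\times[0,1]\to S\backslash\partial S$ be a free homotopy with $h(0,-)=f$, $h(1,-)=g$, and $h(-,0)=h(-,1)$. The track of the basepoint, namely $\delta(s):=h(s,0)=h(s,1)$, is a path from $x_0$ to $x_0$, i.e.\ a based loop whose class I call $[\delta]\in\pi_1(S,x_0)$. The restriction of $h$ to the boundary of the square, suitably oriented, yields a null-homotopic based loop, and reading off the four sides gives the relation $[f]\,[\delta]=[\delta]\,[g]$ in $\pi_1(S,x_0)$, hence $[g]=[\delta]^{-1}[f][\delta]$. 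The care here is purely bookkeeping: one must set up the homotopy on the square correctly so that the top and bottom edges give $f$ and $g$ while the two vertical edges both give $\delta$, and then use the standard fact that a map from the square restricting to a given loop on the boundary exhibits that boundary loop as null-homotopic. This establishes that freely homotopic based loops are conjugate, completing injectivity and hence the bijection.
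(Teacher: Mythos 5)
Your proof is correct, and it handles the one genuinely delicate direction by a different (and in fact more complete) route than the paper. The first two steps coincide with the paper's: both use the cyclic-permutation observation (\ref{free htp rmk}) to show that conjugate based loops are freely homotopic, and both use conjugation by a path to the basepoint to show every free homotopy class is hit. Where you diverge is the remaining direction. The paper packages it as the construction of an explicit inverse map $\gamma\mapsto \mathcal{C}\ell(f*\gamma*f^{-1})$ on $\pi_1^{\rm free}(S)$, checks that this is independent of the choice of connecting path $f$, and then verifies it is a two-sided inverse; notably, the paper does not explicitly verify that the recipe is independent of the chosen \emph{representative} of the free homotopy class, which is exactly the content of injectivity. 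You instead prove injectivity directly: given a free homotopy $h$ between based loops $f$ and $g$, the basepoint track $\delta(s)=h(s,0)=h(s,1)$ is a based loop, and since $h$ extends the boundary loop of the square over the disc, reading off the four sides gives $[f][\delta][g]^{-1}[\delta]^{-1}=1$, hence $[g]=[\delta]^{-1}[f][\delta]$. This is the standard argument, it is correct as you set it up, and it supplies precisely the verification the paper's inverse-map construction leaves implicit; the paper's route, in exchange, is shorter and stays entirely within the cyclic-permutation formalism it has already established.
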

\begin{proof}
Let $\pi_1(S,x_0)\to \pi_1^{\rm free}(S)$ be the natural map. Then if $\alpha, \beta$ are loops on $S$ based on $x_0$, then $\alpha*\beta*\alpha^{-1}$ is free homotopic to $\beta*\alpha^{-1}*\alpha$ by the remark above, and hence is free homotopic to $\beta$. Hence this map yields a well defined map $\mathcal{C}\ell(\pi_1(S,x_0))\to \pi_1^{\rm free}(S)$. To see that it is onto, we define an inverse map. Let $\gamma\in\pi_1^{\rm free}(S)$, and let $f:[0,1]\to S$ be a continuous map with $f(0)=\gamma(0)$ and $f(1)=x_0$, it exists since $S$ is connected. Then define a map $\pi_1^{\rm free}(S)\to \mathcal{C}\ell (\pi_1(S,x_0))$ by $\gamma\mapsto {\mathcal C}\ell (f*\gamma*f^{-1})$. This map is well defined, indeed if $g$ is another curve on $S$ linking $\gamma(0)$ to $x_0$ then we have $$\mathcal{C}\ell (g*\gamma*g^{-1})=\mathcal{C}\ell((g*f^{-1})*(f*\gamma*f^{-1})*(g*f^{-1})^{-1})=\mathcal{C}\ell ((f*\gamma*f^{-1}))$$ since $g*f^{-1}$ is a loop on $S$ based on $x_0$.  
This map is clearly a right inverse for the map $\mathcal{C}\ell (\pi_1(S,x_0))\to \pi_1^{\rm free}(S)$. One only has to prove that it is a left inverse. Let $\gamma$ be a loop on $S$, and $f$ be a curve linking $\gamma(0)$ to $x_0$. Then the loop $f*\gamma*f^{-1}$ is free homotopic to $\gamma*f^{-1}*f$ by (\ref{free htp rmk}), and so free homotopic to $\gamma$, which ends the proof. 
\end{proof}

We denote by $H_1(S,\mathbb Z)$ the first group of the singular homology of $S$. There is a natural morphism of groups $\pi_1(S,x_0)\to H_1(S,\mathbb Z)$ which is the abelianization. By the lemma above, this map factors through  
 $\pi_1(S,x_0)\to \pi_1^{\rm free}(S)$ in a map $$[-]:\pi_1^{\rm free}(S)\to H_1(S,\mathbb Z).$$ 
\subsection{Deformation retract}
Let $T$ be triangle in $\mathbb R^2$ with counterclockwise orientation with vertices $A$, $B$ and $C$. For each triangle $\tau$ of $\Delta$ fix a contiuous map $\varphi_\tau:T\to \tau$ which preserves orientation and which is injective when restricted at $T\backslash \{A,B,C\}$. Define $A'$ (resp. $B'$ and $C'$) the middle of $[BC]$ (resp. $[AC]$, and $[AB]$), and $G$ a point in the interior of $T$. For each internal triangle $\tau$ of $\Delta$ define 
\[R_\tau:= \varphi_{\tau}([A'G]\cup[B'G]\cup[C'G]).\]

\[\scalebox{0.7}{\begin{tikzpicture}[scale=1.2,>=stealth]

\node at (8,0) [xshift=-8pt]{$A$};
\node at (9,2) [yshift=8pt]{$C$};
\node at (10,0) [xshift=8pt]{$B$};

\draw (8,0)--(9,2)--(10,0)--(8,0);
\draw[thick, blue] (8.5,1)--(9.5,1);
\node at (8.5,1) [xshift=-8pt]{$B'$};\node at (9.5,1) [xshift=8pt]{$A'$};

\draw (8,-0.2)--(8.2,0);
\draw (8.2,-0.2)--(8.4,0);\draw (8.4,-0.2)--(8.6,0);\draw (8.6,-0.2)--(8.8,0);\draw (8.8,-0.2)--(9,0);\draw (9,-0.2)--(9.2,0);
\draw (9.2,-0.2)--(9.4,0);\draw (9.4,-0.2)--(9.6,0);\draw (9.6,-0.2)--(9.8,0);\draw (9.8,-0.2)--(10,0);

\node at (4,0) [xshift=-8pt]{$A$};
\node at (5,2) [yshift=8pt]{$C$};
\node at (6,0) [xshift=8pt]{$B$};
\draw (4,0)--(5,2)--(6,0)--(4,0);
\draw[thick, blue] (4.5,1)--(5,0.7)--(5,0);
\draw[thick, blue] (5.5,1)--(5,0.7);
\node at (4.5,1) [xshift=-8pt]{$B'$};\node at (5.5,1) [xshift=8pt]{$A'$};
\node at (5,0) [yshift=-8pt]{$C'$};\node at (5,0.7)[yshift=8pt]{$G$};

\node at (12,0) [xshift=-8pt]{$A$};
\node at (13,2) [yshift=8pt]{$C$};
\node at (14,0) [xshift=8pt]{$B$};

\draw (12,0)--(13,2)--(14,0)--(12,0);
\node at (12.5,1) [xshift=-8pt]{$B'$};
\node at (12.5,1)[blue]{$\bullet$};

\draw (12,-0.2)--(12.2,0);
\draw (12.2,-0.2)--(12.4,0);\draw (12.4,-0.2)--(12.6,0);\draw (12.6,-0.2)--(12.8,0);\draw (12.8,-0.2)--(13,0);\draw (13,-0.2)--(13.2,0);
\draw (13.2,-0.2)--(13.4,0);\draw (13.4,-0.2)--(13.6,0);\draw (13.6,-0.2)--(13.8,0);\draw (13.8,-0.2)--(14,0);

\draw (13.25,2)--(13.125,1.75); \draw (13.375,1.75)--(13.25,1.5);\draw (13.5,1.5)--(13.375,1.25); \draw (13.625,1.25)--(13.5,1); \draw (13.75,1)--(13.625,0.75); \draw (13.875,0.75)--(13.75,0.5);\draw (14,0.5)--(13.875,0.25);

\end{tikzpicture}}\]

If $\tau$ is a triangle with exactly one side being a boundary segment, say $\varphi_\tau([AB])$, then define
\[ R_\tau:=\varphi_\tau([A'B']).\]

If $\tau$ is a triangle with exaclty two sides being boundary segments, say $\varphi_\tau([AB])$ and $\varphi_\tau([BC])$, then define
\[R_\tau:= \varphi_\tau(\{B'\}).\]

And define a graph $R$ on $\intS$ as $R=\bigcup_{\tau\in \Delta} R_\tau$.

\begin{lemma}\label{lemma retract}
There exists a map $r:S\backslash \partial S\to R$ that is a deformation retract of $S\backslash\partial S$. So $r$ induces an isomorphism of groups $\pi_1(S,x_0)\to\pi_1(R,r(x_0))$ and a bijection $\pi_1^{\rm free}(S)\to\pi_1^{\rm free}(R)$.
\end{lemma}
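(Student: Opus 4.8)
The plan is to build the deformation retraction triangle by triangle and glue the pieces along the arcs of $\Delta$. The guiding principle is that it suffices to construct, for each triangle $\tau$, a strong deformation retraction of the open piece $\varphi_\tau(T)\cap\intS$ onto $R_\tau$ whose restriction to each arc-side depends only on that arc and not on $\tau$; two such local homotopies then agree, at every time, on the open arc shared by adjacent triangles, and so they glue to a single continuous homotopy $H\colon\intS\times[0,1]\to\intS$ (pushed forward by the interior-injective, orientation-preserving maps $\varphi_\tau$). Concretely, I would fix once and for all, for every internal arc $i$ of $\Delta$, the ``contraction to the midpoint'' of the open arc $i\setminus M$ onto $\{m_i\}$ (say the affine contraction of an open interval onto its centre), and require every local retraction below to induce exactly this map on each of its arc-sides. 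Note that $\intS$ is covered by the open triangles together with the open arcs, that all vertices lie in $M\subset\partial S$ and all boundary segments lie in $\partial S$, so that exactly the vertices and boundary segments are deleted, while the midpoints and the central points $G$ genuinely lie in $\intS$.

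Working in the model triangle $T$, I would treat the three types of triangle separately. For an \emph{internal} triangle the three tripod legs $[A'G],[B'G],[C'G]$ cut $T$ into three quadrilateral corner regions, one at each of $A,B,C$; after deleting the vertex I retract each corner region onto the two tripod legs bounding it, arranging the motion so that each half-arc on its boundary slides toward the corresponding midpoint among $A',B',C'$. For a triangle with exactly one boundary side $\varphi_\tau([AB])$, I delete the closed side $\varphi_\tau([AB])$ together with the apex $C$ and retract what remains onto $[A'B']$: the lower trapezoid is pushed up off the deleted boundary while the open arcs $AC,BC$ slide to $B',A'$, and the upper triangle (apex $C$ deleted) collapses onto the base $[A'B']$ with its two sides sliding to the base endpoints $B',A'$ — crucially \emph{not} by vertical projection, since compatibility forces the upper halves of the arcs $AC,BC$ to reach their midpoints $B',A'$. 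For a triangle with two boundary sides I delete both boundary sides and the three vertices they meet, leaving a region homeomorphic to a half-open disk, which retracts onto the single point $\varphi_\tau(\{B'\})$, again compatibly with the midpoint contraction on the lone arc-side $AC$. In each case the induced map on every arc-side is, by construction, the prescribed contraction to the midpoint.

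Once these local homotopies are glued, $H$ is a strong deformation retraction of $\intS$ onto $R$, so $r:=H(-,1)$ is a deformation retract. A deformation retract is a homotopy equivalence fixing $r(x_0)$, hence induces an isomorphism $\pi_1(S,x_0)\xrightarrow{\sim}\pi_1(R,r(x_0))$; and since any homotopy equivalence $f$ induces a bijection $[S^1,X]\to[S^1,Y]$, $g\mapsto f\circ g$, on free homotopy classes of maps from the circle, $r$ induces the asserted bijection $\pi_1^{\rm free}(S)\to\pi_1^{\rm free}(R)$.

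I expect the main obstacle to be precisely the compatibility along shared arcs: one must design the three model homotopies so that each of them restricts to the single fixed midpoint-contraction on every arc-side, \emph{uniformly in the time parameter} and not merely at $s=0,1$, and one must verify continuity of the corner retractions near the deleted vertices and near the deleted boundary segments, where whole sides collapse to a single endpoint (this uses that the offending apex or vertex has been removed). The remaining verifications — that the glued map is well defined and continuous through the $\varphi_\tau$ (taking care of possibly self-folded triangles via interior-injectivity) and that exactly $\partial S$ and the marked points are deleted — are routine bookkeeping.
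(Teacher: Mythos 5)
Your proposal follows essentially the same route as the paper: the retraction is built in the model triangle $T$, case by case according to the number of boundary sides (collapse to the point $B'$, retraction onto $[A'B']$, retraction onto the tripod $[A'G]\cup[B'G]\cup[C'G]$), and the pieces are glued along the arcs via the maps $\varphi_\tau$, each local map collapsing every open arc-side to its midpoint so that the gluing is consistent. The paper simply gives explicit formulas (radial projection from the opposite vertex) and leaves the verification as an ``easy exercise''; your additional attention to time-uniform compatibility of the homotopies on shared arcs fills in exactly the point the paper glosses over, so the two arguments are the same in substance.
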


\begin{proof} 
For $x\in T\backslash([AB]\cup[BC])$, we define $r_1(x)=B'$. For $x\in T\backslash([AB]\cup\{C\})$ we define $r_2(x)$ as the interection point between the line $(Cx)$ and $[A'B']$. Then for $x\in T\backslash\{A,B,C\}$ we define 
\[r_3(x)=\left\{
\begin{array}{ll}  (Cx)\cap ([B'G]\cup[GA']) & \textrm{ if }x  \textrm{ is in the quadrilateral } B'CA'G;\\
(Ax)\cap ([B'G]\cup[GC']) & \textrm{ if }x  \in C'AB'G;\\
(Bx)\cap ([A'G]\cup[GC']) & \textrm{ if }x  \in A'BC'G;
\end{array}\right.\]

\[\scalebox{0.8}{\begin{tikzpicture}[scale=1.4,>=stealth]

\draw (0,0)--(1,2)--(2,0)--(0,0);
\draw[thick, blue] (0.5,1)--(1.5,1);

\draw (0,-0.2)--(0.2,0);
\draw (0.2,-0.2)--(0.4,0);\draw (0.4,-0.2)--(0.6,0);\draw (0.6,-0.2)--(0.8,0);\draw (0.8,-0.2)--(1,0);\draw (1,-0.2)--(1.2,0);
\draw (1.2,-0.2)--(1.4,0);\draw (1.4,-0.2)--(1.6,0);\draw (1.6,-0.2)--(1.8,0);\draw (1.8,-0.2)--(2,0);
\draw[thick, red] (1,2)--(0.75,1);
\draw[thick, red] (1,2)--(1.5,0.5);
\node at (0.87,1.5) {$*$};
\node at (0.6, 1.5) {$x$};
\node at (0.75,1) {$*$}; 
\node at (1.5,0.5) {$*$};
\node at (1.33,1) {$*$};
\node at (1.3,0.5) {$y$};
\node at (1.9,1) {$r_2(y)$};
\node at (0.1,1) {$r_2(x)$};

\draw (4,0)--(5,2)--(6,0)--(4,0);
\draw[thick, blue] (4.5,1)--(5,0.7)--(5,0);
\draw[thick, blue] (5.5,1)--(5,0.7);
\draw[thick, red] (4,0)--(5,0.5);
\node at (4.5,0.25) {$*$};
\node at (4.5,0.4) {$x$};
\node at (5,0.5) {$*$};
\node at (5.4,0.5) {$r_3(x)$};

\end{tikzpicture}}\]

Then for $x\in\intS$, if $x\in \tau$ where $\tau$ is a triangle of $\Delta$ we define $r(x):=\varphi_\tau\circ r_i\circ\varphi^{-1}_\tau (x)$ where $i$ is the number of sides of $\tau$ which are internal arcs of $\Delta$. It is an easy exercise to prove that $r$ is well defined map $\intS\to R$ and a deformation retract.

\end{proof}

\subsection{Intersection between a loop and an arc}\label{subsection intersection}
Let $i$ be an oriented arc of $(S,M)$.  For $\gamma$ a loop that cuts finitely and transversally $i$, we denote by ${\rm Int}^+(\gamma,i)$ (resp. ${\rm Int}^-(\gamma,i)$)  the number of times where $\gamma$ and $i$ intersect in such a way the intersection agrees (resp. disagrees) with the orientation of $S$. Define by 
${\rm Int}(\gamma,i)={\rm Int}^+(\gamma,i)-{\rm Int}^-(\gamma,i)$ the algebraic number of intersections between $\gamma$ and $i$. 

The next lemma is classical in the case of the intersection of two loops. We give here a proof for the convenience of the reader.

\begin{lemma}\label{lemma intersection}The map ${\rm Int}(-,i)$ induces a map $\pi_1^{\rm free}(S)\to \mathbb Z$ and  a $\mathbb Z$-linear map $H_1(S,\mathbb Z)\to \mathbb Z$ such that the following diagram commutes.
\[\xymatrix{\pi_1^{\rm free}(S)\ar[r]^{[-]}\ar[d] & H_1(S,\mathbb Z)\ar[dl]\\ \mathbb Z & }.\]
\end{lemma}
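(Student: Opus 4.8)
The plan is to isolate the only nontrivial point, namely that the signed count ${\rm Int}(\gamma,i)$ depends on $\gamma$ only through its free homotopy class; once this is known, both asserted maps and the commutativity of the diagram follow formally. I would first record two reductions: by general position every free homotopy class of loops contains a representative meeting $i$ transversally in finitely many points and avoiding the endpoints of $i$ (which lie on $\partial S$, while loops run in $\intS$), and any two such representatives in the same class are joined by a smooth homotopy $H\colon S^1\times[0,1]\to\intS$ that I may take transverse to the open arc $i^\circ:=i\cap\intS$.

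The heart of the argument is then a one-dimensional cobordism computation. Since $i^\circ$ is properly embedded in $\intS$ and $H$ has compact image, $H^{-1}(i^\circ)$ is a \emph{compact} $1$-manifold in $S^1\times[0,1]$ whose boundary lies in $S^1\times\{0,1\}$ and consists exactly of the intersection points of $\gamma_0=H(-,0)$ and $\gamma_1=H(-,1)$ with $i$; here compactness is where the boundary hypothesis enters, as it is precisely what prevents a crossing from escaping through an endpoint of $i$ during the homotopy. The orientation of $S$ together with the orientation of $i$ gives $i^\circ$ a co-orientation, which pulls back along $H$ to a co-orientation of $H^{-1}(i^\circ)$; combined with the product orientation of $S^1\times[0,1]$ this orients $H^{-1}(i^\circ)$, and one checks that at each boundary point the induced sign agrees with the crossing sign defining ${\rm Int}$. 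As the signed number of boundary points of a compact oriented $1$-manifold vanishes, the contributions from $S^1\times\{0\}$ and $S^1\times\{1\}$ cancel, giving ${\rm Int}(\gamma_0,i)={\rm Int}(\gamma_1,i)$. This is the step I expect to be the main obstacle, the delicate part being the orientation bookkeeping that matches the boundary orientation of $H^{-1}(i^\circ)$ with the $\pm$ convention of Section~\ref{subsection intersection}.

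With free homotopy invariance in hand, the map ${\rm Int}(-,i)\colon\pi_1^{\rm free}(S)\to\mathbb Z$ is well defined. Choosing the basepoint $x_0\notin i$ and noting that the crossings of a concatenation $\alpha*\beta$ are the disjoint union of those of $\alpha$ and of $\beta$, the induced map $\pi_1(S,x_0)\to\mathbb Z$ is a group homomorphism. Since $\mathbb Z$ is abelian it factors uniquely through the abelianization, which is exactly the map $\pi_1(S,x_0)\to H_1(S,\mathbb Z)$, yielding the desired $\mathbb Z$-linear map $H_1(S,\mathbb Z)\to\mathbb Z$. Finally, by the identification of $\pi_1^{\rm free}(S)$ with the conjugacy classes of $\pi_1(S,x_0)$ and the surjectivity of $\pi_1(S,x_0)\to\pi_1^{\rm free}(S)$, the two composites $\pi_1^{\rm free}(S)\to\mathbb Z$ agree, so the triangle commutes by construction.
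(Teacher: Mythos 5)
Your proof is correct, but it reaches the key point --- invariance of ${\rm Int}(\gamma,i)$ under free homotopy --- by a genuinely different route than the paper. The paper reuses the deformation retract $r\colon S\backslash\partial S\to R$ onto the ribbon graph $R$ built from the triangulation: a loop retracts to an edge-path on $R$ meeting $r(i)$ transversally, and homotopic edge-paths on a graph differ only by inserting or deleting backtracks (``U-turns''), which visibly do not change the signed crossing count. You instead run the standard differential-topology argument: put the homotopy $H\colon S^1\times[0,1]\to S\backslash\partial S$ in general position with respect to the properly embedded open arc $i\cap(S\backslash\partial S)$, observe that $H^{-1}$ of it is a compact oriented $1$-manifold with boundary in $S^1\times\{0,1\}$, and conclude because the signed boundary count of such a manifold vanishes. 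Your identification of where properness matters (the endpoints of $i$ sit on $\partial S$ while loops live in the interior, so crossings cannot escape off the end of the arc) is exactly the right remark, and the remaining steps (homomorphism property, factoring through the abelianization, commutativity via surjectivity of $\pi_1(S,x_0)\to\pi_1^{\rm free}(S)$ onto conjugacy classes) match the paper. The trade-off: the paper's argument is purely combinatorial and piggybacks on the retract it has already constructed, avoiding any appeal to smooth structures or transversality; yours is self-contained on the topology side, requires a word about smooth approximation of continuous loops and homotopies, demands the orientation bookkeeping you flag, and in exchange generalizes immediately beyond graphs and surfaces. Either is acceptable here.
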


\begin{proof} Let $r:\intS\to R$ be the deformation retract defined above, and $\gamma$ be a loop on $S$ based on $x_0$. Then $r(\gamma)$ is a loop on $R$ based on $r(x_0)$ that meets $r(i)$ finitely many times and that does not make U-turn on $r(i)$ and we have ${\rm Int}(\gamma,i)={\rm Int}(r(\gamma),i)$. Moreover if two loops on $R$ are homotopic then one can pass from one to the other by adding and removing U-turns, so  they have the same algebraic number of intersection with $i$. Hence the map ${\rm Int}_R(-,r(i))$ yields well-defined and compatible maps $\pi_1(R,r(x_0))\to \mathbb Z$. Therefore by Lemma~\ref{lemma retract}, ${\rm Int}(-,i)$ induces a map $\pi_1(S,x_0)\to \mathbb Z$. Moreover, by definition ${\rm Int}(\gamma*\gamma',i)={\rm Int}(\gamma,i)+{\rm Int}(\gamma',i)$ so ${\rm Int}(-,i):\pi_1(S,x_0)\to \mathbb Z$ is a morphism of groups.  Since $\mathbb Z$ is abelian, this morphism is constant on conjugacy classes and factors through $H_1(S,\mathbb Z)\to \mathbb Z$ since $H_1(S,\mathbb Z)$ is the abelianization of $\pi_1(S,x_0)$.

\end{proof}

\subsection{Degree of a loop}\label{subsection degree}

In this section we show that any degree-$1$ map $d$ on a triangulation $\Delta$ defines a map $d:\pi_1^{\rm free}(S)\to \mathbb Z$ and describe the properties of this map.

Let $\gamma$ be a loop that cuts transversally each arc of $\Delta$ and does not cut the same arc twice in succession. For an arrow $\alpha:i\to j$ in a triangle $\tau\in \Delta$ we say that $\gamma$ \emph{intersects  positively (resp. negatively)} $\alpha$  if $\gamma$ intersects the arc $i$ (resp. $j$), then the arc $j$  (resp. $i$) and stay in the interior of $\tau$ in between. 

\begin{lemma}There is a well defined map $\overline{(-)}^\Delta:\pi_1^{\rm free}(S)\to \mathbb Z Q_1$ such that if $\gamma$ is a closed curve that cuts transversally each arc of $\Delta$ and not the same arc twice in succession then $\overline{\gamma}^\Delta=\sum_{\alpha}(n_\alpha^+-n_\alpha^-) \alpha$ where $n_\alpha^+$ (resp. $n_\alpha^-$) is the number of times  where $\gamma$ intersects positively (resp. negatively) the arrow $\alpha$.
\end{lemma}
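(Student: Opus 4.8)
The plan is to establish the claimed map in two stages: first defining $\overline{\gamma}^\Delta$ for a loop $\gamma$ in sufficiently generic position, and then showing that this assignment descends to free homotopy classes. For the first stage, given a loop $\gamma$ that cuts each arc transversally and never cuts the same arc twice in succession, the formula $\overline{\gamma}^\Delta=\sum_{\alpha}(n_\alpha^+-n_\alpha^-)\alpha$ makes sense as soon as we check that each transversal passage of $\gamma$ through the interior of the triangulation contributes to exactly one arrow. I would argue that each time $\gamma$ crosses an arc $i$, enters a triangle $\tau$, and crosses a second arc $j$ of $\tau$ without first leaving $\tau$, this determines a unique oriented internal angle of $\Delta$, hence a unique arrow $\alpha:i\to j$ together with a sign recording whether the crossing agrees with the orientation of $\alpha$. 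The hypothesis that $\gamma$ does not cut the same arc twice in succession guarantees that consecutive crossings lie on distinct arcs, so every arc-to-arc segment of $\gamma$ inside a triangle is counted, and the hypothesis of transversality ensures these are finite in number. Thus $\overline{\gamma}^\Delta$ is a well-defined element of $\mathbb Z Q_1$ for such $\gamma$.

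The main content, and the main obstacle, is the second stage: showing that $\overline{\gamma}^\Delta$ depends only on the free homotopy class of $\gamma$. The strategy I would adopt mirrors the proof of Lemma~\ref{lemma intersection}: I would pass through the deformation retract $r:\intS\to R$ of Lemma~\ref{lemma retract}. Observe that on the graph $R$, the edges $R_\tau$ passing through the barycentric picture of an internal triangle encode precisely the oriented angles, so a loop on $R$ in reduced form (no immediate backtracking, i.e.\ no U-turns) reads off a word in the arrows, and this word is exactly $\overline{\gamma}^\Delta$ when one records signed multiplicities. The key point is that any free homotopy of loops in $S$ can be realized, after retracting to $R$, as a finite sequence of elementary moves: insertions and deletions of U-turns (backtracks) along edges of $R$, together with cyclic rotations of the loop coming from the freedom in the basepoint as in~(\ref{free htp rmk}).

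I would then verify that each elementary move leaves $\sum_{\alpha}(n_\alpha^+-n_\alpha^-)\alpha$ unchanged. A U-turn along an edge of $R_\tau$ corresponds to $\gamma$ crossing an arc $i$, entering $\tau$, and crossing back over $i$ (or crossing the partner arc and immediately returning); in arrow-language this produces a pair $\alpha$ and its formal inverse, or a cancelling pair of positive and negative intersections with the same arrow $\alpha$, whose net contribution $(+1)+(-1)=0$ vanishes in $\mathbb Z Q_1$. Cyclic rotation of the loop permutes the crossings cyclically without altering the signed count of any arrow. Since every free homotopy decomposes into these moves and each preserves the signed arrow sum, the element $\overline{\gamma}^\Delta$ is a free homotopy invariant. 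The delicate step to get right is the faithful translation between backtracks of the retracted loop in $R$ and cancelling signed pairs of angle-crossings of $\gamma$ in $S$; here the precise combinatorics of the graph $R$ near internal, one-boundary-side, and two-boundary-side triangles (as set up before Lemma~\ref{lemma retract}) must be used to confirm that only internal angles contribute and that each homotopy move acts as a trivial or sign-cancelling operation on the coefficients.
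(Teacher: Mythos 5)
Your proof follows essentially the same route as the paper's: both pass to the deformation retract $R$ of Lemma~\ref{lemma retract}, identify free homotopy classes of loops on the graph with cyclically reduced edge-words, and observe that the signed arrow-count is invariant under cancellation of backtracks and under cyclic rotation. Your write-up simply spells out more explicitly the U-turn cancellations that the paper treats as immediate, so there is nothing to correct.
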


\begin{proof}
The free homotopy class of a loop on a graph is determined by a word $e_1^{\pm}\ldots e_n^{\pm}$ where the $e_i$ are edges of the graph modulo the relation $e^+e^-=e^-e^+=\emptyset$ and cyclic equivalence. Hence it is clear how to associate an element in $\mathbb Z Q_1$ to an element in $\pi_1^{\rm free}(R)$. We conclude by Lemma \ref{lemma retract}.
\end{proof}

\begin{definition}
Let $d$ be a degree-$1$ map on $\Delta$. Then we set $d(\gamma)=d(\overline{\gamma}^\Delta)$ which defines a map $d:\pi_1^{\rm free}(S)\to \mathbb Z$.
\end{definition}

Note that $d$ does not factor through $[-]:\pi_1^{\rm free}(S)\to H_1(S,\mathbb Z)$ in general (cf Remark \ref{remark}).  However we have the following result.

\begin{lemma}\label{lemma d-d' homology}
Let $d$ and $d'$ be degree-$1$ maps $\mathbb Z Q_1\to \mathbb Z$. Then the map $d-d':\pi_1^{\rm free}(S)\to \mathbb Z$ factors through $[-]$ and the induced map $[d-d']:H_1(S,\mathbb Z)\to \mathbb Z$ is $\mathbb Z$-linear.
\end{lemma}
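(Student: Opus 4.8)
The plan is to exploit the single algebraic constraint that distinguishes $d-d'$ from an arbitrary function on arrows. Writing $e:=d-d'$, the hypothesis that $d$ and $d'$ are both degree-$1$ maps means that $e$ is a \emph{degree-$0$} functional, i.e.\ $e(\alpha_\tau)+e(\beta_\tau)+e(\gamma_\tau)=0$ for every internal triangle $\tau\in Q_2$. I want to show that this one relation is exactly what forces $e\circ\overline{(-)}^\Delta$ to depend only on the homology class $[\gamma]$.

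First I would reinterpret $\overline{(-)}^\Delta$ through the cellular chain complex of the graph $R$ of Lemma~\ref{lemma retract}. Orienting each edge of $R$ from an arc-midpoint $m_i$ towards the centre $G_\tau$ of its triangle, I define a homomorphism $\Phi\colon\mathbb ZQ_1\to C_1(R)$ sending an arrow $\alpha\colon i\to j$ inside an internal triangle $\tau$ to the corner $1$-chain $[m_i,G_\tau]-[m_j,G_\tau]$, and an arrow in a one-boundary triangle to its single edge. By construction a positive (resp.\ negative) intersection of $\gamma$ with $\alpha$ contributes $+\Phi(\alpha)$ (resp.\ $-\Phi(\alpha)$) to the cellular $1$-chain traced out by the loop $r(\gamma)$ in $R$, so that $\Phi(\overline\gamma^\Delta)$ is the $1$-cycle of $r(\gamma)$; under the identification $H_1(S,\mathbb Z)\cong H_1(R)=Z_1(R)$ coming from Lemma~\ref{lemma retract} this cycle is precisely $[\gamma]$. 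Since the map $[-]$ is onto and every homology class is thus $[\gamma]$ for some loop, this also shows $\Phi(\mathbb ZQ_1)\supseteq Z_1(R)$.

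The heart of the argument is then a kernel computation. The edges of $R$ are indexed by pairs (internal triangle, arc) together with the single edges of one-boundary triangles, so both $\mathbb ZQ_1$ and $C_1(R)$ split as direct sums over triangles and $\Phi$ is block-diagonal. On a one-boundary triangle $\Phi$ is an isomorphism onto the corresponding edge; on an internal triangle $\tau$ it is the map $\mathbb Z^3\to\mathbb Z^3$ whose image is the sum-zero subgroup and whose kernel is the rank-one group generated by $\alpha_\tau+\beta_\tau+\gamma_\tau$. Hence $\ker\Phi=\bigoplus_{\tau\in Q_2}\mathbb Z(\alpha_\tau+\beta_\tau+\gamma_\tau)$, and the degree-$0$ condition says exactly that $e$ kills each generator, so $\ker\Phi\subseteq\ker e$.

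Consequently $e$ descends to a homomorphism $\overline e$ on $\Phi(\mathbb ZQ_1)\subseteq C_1(R)$, and restricting $\overline e$ to $Z_1(R)=H_1(S,\mathbb Z)$ yields a $\mathbb Z$-linear map $[d-d']$ satisfying $(d-d')(\gamma)=e(\overline\gamma^\Delta)=\overline e(\Phi(\overline\gamma^\Delta))=[d-d']([\gamma])$, which is the claim. The step I expect to require the most care is the identification $\Phi(\overline\gamma^\Delta)=[\gamma]$: one must check that the orientation conventions for ``positive intersection'' of Subsection~\ref{subsection degree} match the chosen orientations of the edges of $R$, so that the corner chains genuinely assemble into the homology class, and that no relations beyond the triangle relations enter $\ker\Phi$. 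Both points rely on the explicit triangle-by-triangle description of $R$, but are otherwise routine.
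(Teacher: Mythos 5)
Your proof is correct, and it rests on the same key algebraic observation as the paper's: the degree-$1$ hypothesis forces $e=d-d'$ to vanish on every triangle sum $\alpha_\tau+\beta_\tau+\gamma_\tau$, and these sums span exactly the ambiguity of $\overline{\gamma}^\Delta$ within a fixed homology class. The difference lies in how that second fact is established. The paper introduces the complex $C_\bullet^\Delta:\mathbb ZQ_2\xrightarrow{\partial_1}\mathbb ZQ_1\xrightarrow{\partial_0}\mathbb ZQ_0$ with $\partial_1(\tau)=\alpha_\tau+\beta_\tau+\gamma_\tau$, cites \cite[Lemma 2.2]{AG} for $H_1(C_\bullet)\simeq H_1(S,\mathbb Z)$, and concludes in two lines that $[\gamma]=[\gamma']$ implies $\overline{\gamma}^\Delta-\overline{\gamma'}^\Delta=\partial_1(T)$. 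You instead re-derive this identification from scratch: your map $\Phi:\mathbb ZQ_1\to C_1(R)$ is block-diagonal over triangles, its kernel is computed to be $\bigoplus_{\tau\in Q_2}\mathbb Z(\alpha_\tau+\beta_\tau+\gamma_\tau)=\operatorname{im}\partial_1$, and $\Phi(\overline{\gamma}^\Delta)$ is identified with the cellular cycle of $r(\gamma)$, hence with $[\gamma]$ via Lemma~\ref{lemma retract} and $H_1(R)=Z_1(R)$. What your route buys is self-containedness (the only external input is the deformation retract, already constructed in the paper) and a cleaner justification of the $\mathbb Z$-linearity, which the paper passes over rather quickly; the cost is the orientation bookkeeping you flag at the end, which is indeed routine and is essentially the verification underlying the cited lemma. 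One small caveat: if a triangle has two of its sides lying on the same arc, the edges of $R_\tau$ must be indexed by sides of $\tau$ rather than by arcs for your formula $\Phi(\alpha)=[m_i,G_\tau]-[m_j,G_\tau]$ to be unambiguous, but this does not affect the kernel computation or the rest of the argument.
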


\begin{proof}

Let consider the following complex of $\mathbb Z$-modules 
\[\xymatrix{C_\bullet^\Delta: \mathbb Z Q_2\ar[r]^{\partial_1} & \mathbb Z Q_1\ar[r]^{\partial_0} &\mathbb Z Q_0 }\]
where $\partial_1(\tau)=\alpha_\tau+\beta_\tau+\gamma_\tau$ and $\partial_0(\alpha)=t(\alpha)-s(\alpha)$.  It satisfies $H_1(C_\bullet)\simeq H_1(S,\mathbb Z)$ by \cite[Lemma 2.2]{AG}.

Therefore if  $\gamma$ and $\gamma'$ are loops in $\pi_1^{\rm free}(S)$ with $[\gamma]=[\gamma']$ then 
$\overline{\gamma}^\Delta-\overline{\gamma'}^\Delta=\partial_1 (T)$ for some $T\in\mathbb Z Q_2$. So since $(d-d')(\tau)=d(\tau)-d'(\tau)=0$ for each $\tau\in Q_2$ and since $d$ and $d'$ are $\mathbb Z$-linear maps $\mathbb Z Q_1\to \mathbb Z$, we get 
\begin{align*}(d-d')(\gamma)-(d-d')(\gamma') & =d(\gamma)-d(\gamma')+d'(\gamma')-d'(\gamma)\\ & = d(\overline{\gamma}^\Delta)-d(\overline{\gamma'}^\Delta)+d'(\overline{\gamma'}^\Delta)-d'(\overline{\gamma}^\Delta\\  & = d(\overline{\gamma}^\Delta-\overline{\gamma'}^\Delta)-d'(\overline{\gamma}^\Delta-\overline{\gamma'}^\Delta) \\ &=d(\partial_1(T))-d'(\partial_1(T))=0.\end{align*}
Thus $d-d'$ is well-defined on the homology classes. The $\mathbb Z$-linearity folows directly from the $\mathbb Z$-linearity of $(d-d'):\mathbb Z Q_1\to \mathbb Z$. 
\end{proof}

\begin{corollary}\label{corollary d-d' homology}
Let $(\Delta,d)$ and $(\Delta',d')$ be graded triangulations. Then the map $d-d':\pi_1^{\rm free}(S)\to \mathbb Z$ factors through $[-]$ in a $\mathbb Z$-linear map $[d-d']:H_1(S,\mathbb Z)\to \mathbb Z$.
\end{corollary}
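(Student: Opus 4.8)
The plan is to reduce Corollary~\ref{corollary d-d' homology} to Lemma~\ref{lemma d-d' homology} by handling the extra generality: in the lemma, $d$ and $d'$ are both degree-$1$ maps on \emph{the same} triangulation $\Delta$, whereas in the corollary we are given two graded triangulations $(\Delta,d)$ and $(\Delta',d')$ with a priori \emph{different} triangulations $\Delta$ and $\Delta'$. So the essential new content is to compare degree maps attached to different triangulations of the same surface $(S,M)$. I would first observe that any two triangulations of $(S,M)$ are connected by a finite sequence of flips, so it suffices to understand what happens under a single flip $\Delta' = \mu_i^L(\Delta,d)$, and then argue by induction on the number of flips.

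First I would fix a triangulation $\Delta$ and consider the flip of an arc $i$, producing $\Delta'$ with its degree map $d'$. The key point is that the combinatorial datum $\overline{\gamma}^\Delta \in \mathbb{Z}Q_1^\Delta$ and the corresponding datum $\overline{\gamma}^{\Delta'} \in \mathbb{Z}Q_1^{\Delta'}$ for the same loop $\gamma$ are related in a controlled way. The cleanest route is to transport everything onto a common target: since by \cite[Lemma 2.2]{AG} both complexes $C_\bullet^\Delta$ and $C_\bullet^{\Delta'}$ compute $H_1(S,\mathbb{Z})$ in degree one, and since the degree-$1$ condition says precisely that $d$ vanishes on the image of $\partial_1$, each degree map $d$ descends to a homomorphism on $Z_1(C_\bullet^\Delta) = \ker\partial_0$ modulo $\operatorname{im}\partial_1$ — but that is exactly where the \emph{difference} of two degree maps lives. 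So the strategy is to show that $d$ and $d'$, pulled back along the natural identifications of the two chain complexes with a fixed model of $H_1(S,\mathbb{Z})$, differ by a map that factors through $[-]$.

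The main obstacle, and the step I would spend the most care on, is making precise the statement that flipping relates $\overline{\gamma}^\Delta$ and $\overline{\gamma}^{\Delta'}$ compatibly with the homology identifications and with the degree maps. Concretely, one must track how the arrows of $Q^\Delta$ are replaced by arrows of $Q^{\Delta'}$ in the neighborhood of the flipped arc $i$, and verify that the local contribution of $d$ to $d(\gamma)$ changes by an amount depending only on the homology class $[\gamma]$. The defining relation $\Delta' = \mu_i^L(\Delta,d)$ from \cite[Def 2.13]{AG} prescribes the new degrees on the new arrows in terms of the old, and the content of a single-flip step is that this prescription is exactly the one making $d - d'$ (suitably interpreted across the two triangulations) a homology-linear functional. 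I expect this local verification to be the crux; once it is established for one flip, the corollary follows by composing the homology-linear corrections over a sequence of flips, using that a composite of $\mathbb{Z}$-linear maps factoring through $[-]$ again factors through $[-]$ and is $\mathbb{Z}$-linear.

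An alternative, and perhaps more economical, line of attack avoids flips entirely: given $(\Delta,d)$ and $(\Delta',d')$, choose a loop representative $\gamma$ that is transverse to \emph{both} triangulations simultaneously, and apply Lemma~\ref{lemma intersection} to the arcs of $S$. Indeed Lemma~\ref{lemma intersection} already shows that each algebraic intersection number $\operatorname{Int}(-,i)$ factors through $[-]$ and is $\mathbb{Z}$-linear. If one can express the difference $d - d'$ as a $\mathbb{Z}$-linear combination of intersection functionals $\operatorname{Int}(-,i)$ over the arcs $i$ of $S$, then the corollary is immediate from Lemma~\ref{lemma intersection}, since a linear combination of maps that factor through $[-]$ again factors through $[-]$. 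I would lean toward whichever of these two presentations keeps the bookkeeping lightest, but in both the decisive ingredient is the homological reinterpretation of the degree-$1$ condition via $C_\bullet$, which is precisely what Lemma~\ref{lemma d-d' homology} packages for a single triangulation.
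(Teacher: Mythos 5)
Your primary route is exactly the paper's: connect $\Delta'$ to $\Delta$ by a sequence of flips, transport $d'$ along the left graded flips to a degree-$1$ map $d''$ on $\Delta$, and then invoke Lemma~\ref{lemma d-d' homology} for the pair $(d,d'')$. The single-flip crux you isolate is settled in the paper by citing the proof of \cite[Lemma 2.14]{AG}, which gives something stronger than the homology-linear correction you anticipate, namely that the graded flip preserves the degree of every loop exactly, so $d'=d''$ as maps $\pi_1^{\rm free}(S)\to \mathbb Z$ and no correction term needs to be tracked.
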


\begin{proof}
There let $s=i_1,\ldots, i_\ell$ be a sequence of flips such that $\mu_s(\Delta')=\Delta$. Then we have $\mu_s^L(\Delta',d')=(\Delta,d'')$ for some degree-$1$ map $d''$, and where $\mu_i^L$ is the left graded flip defined in \cite[Definition 2.13]{AG}. By the proof of \cite[Lemma 2.14]{AG} we get that $d'(\gamma)=d''(\gamma)$ for any closed curve $\gamma$ in $S$, that is $d'=d''$ as maps $\pi_1^{\rm free}(S)\to \mathbb Z$. 
Since $d$ and $d''$ are degree-$1$ maps $\mathbb Z Q_1^{\Delta}\to \mathbb Z$, we conclude by Lemma~\ref{lemma d-d' homology}.

\subsection{Derived invariant for general surface algebras}

Let us start with a piece of notation.
For an oriented surface $X$, with $Y,Z$ closed subsets of  $X$, we denote by $\Homeo^{+,Y}(X,Z)$ the group of orientation preserving homeomorphisms of $X$ whose restriction to $Y$ is the identity and which fix globally $Z$. The corresponding mapping class group, that is the group of elements in $\Homeo^{+,Y}(X,Z)$ up to isotopy among homeomorphisms of the same kind, is denoted by $\MCG^{Y}(X,Z)$. When $Y$ (resp. $Z$) is empty, we denote $\Homeo^{+}(X,Z)$ (resp. $\Homeo^{+,Y}(X)$) and $\MCG (X,Z)$ (resp. $\MCG ^{Y}(X)$) the corresponding groups.

The following result  is the main theorem in \cite{AG}.

\begin{theorem}\label{thmAG}\cite[Thm 3.13]{AG}
Let $(S,M)$ be an oriented surface with marked points on the boundary, and let $(\gamma_1,\ldots,\gamma_s)$ be a set of simple closed curves generating $H_1(S,\mathbb Z)$. Let $(\Delta,d)$ and $(\Delta',d')$ be triangulations with admissible cuts and $\Lambda=(\Delta,d)$ and $\Lambda'=(\Delta',d')$ be the corresponding surface algebras. Then the following are equivalent
\begin{enumerate}
\item $\Lambda$ and $\Lambda'$ are derived equivalent;
\item There exists $\Phi\in \Homeo^+(S,M)$ such that for any $i=1,\ldots,s$ $d(\gamma_i)=d'(\Phi(\gamma_i)).$
\end{enumerate}
\end{theorem}

Note that if $\Phi\in \Homeo^+(S,M)$ is isotopic to identity, then $\Phi(\gamma)$ is isotopic to $\gamma$ for any simple closed curve $\gamma$, so $(2)$ is equivalent to  
\begin{itemize}
\item[(2')]
\textit{There exists }$\Phi\in \MCG(S,M)$\textit{ such that for any }$i=1,\ldots,s$ $d(\gamma_i)=d'(\Phi(\gamma_i))$.
\end{itemize}

By Corollay \ref{corollary d-d' homology}, $(2)$ is also equivalent to 
\begin{itemize}
\item[(2'')] \textit{There exists }$\Phi\in \MCG(S,M)$ such that $[d-d'\circ\Phi]=0$ as a map $H_1(S,\mathbb Z)\to \mathbb Z$.
\end{itemize}

\end{proof}

\section{Derived invariant in the case $g=b=1$}

The aim of this section is to reformulate statement $(2)$ of Theorem~\ref{thmAG} in the case of the torus with one boundary component in order to get the following result.

\begin{theorem}\label{thmgcd} Let $\Lambda=(\Delta,d)$ and $\Lambda'=(\Delta',d')$ be surface algebras associated to a surface of genus $1$ with one boundary component. Let $(a,b)$ be simple closed curves that generate $H_1(S,\mathbb Z)$. Then the following are equivalent
\begin{enumerate}
\item the algebras $\Lambda$ and $\Lambda'$ are derived equivalent;
\item $\gcd (d(a),d(b))=\gcd (d'(a),d'(b))$;
\end{enumerate}
where $\gcd (0,n)$ is defined to be $n$.
\end{theorem}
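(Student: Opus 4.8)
The plan is to reduce the derived-equivalence condition (1) to the purely arithmetic condition (2) by exploiting Theorem~\ref{thmAG} together with the explicit structure of the mapping class group of the torus with one boundary component. By the equivalences recalled after Theorem~\ref{thmAG}, condition (1) is equivalent to the existence of $\Phi\in\MCG(S,M)$ with $d(\gamma)=d'(\Phi(\gamma))$ on a generating set of $H_1(S,\mathbb{Z})$. So first I would fix the generators $(a,b)$ of $H_1(S,\mathbb{Z})\cong\mathbb{Z}^2$ and translate the action of $\Phi$ on free homotopy classes into an action on homology. The key point is that $S$ is a torus with one boundary component: its mapping class group $\MCG(S,M)$ (fixing the marked points on the single boundary component) is well understood, and the induced action on $H_1(S,\mathbb{Z})\cong\mathbb{Z}^2$ is the whole of $\mathrm{SL}_2(\mathbb{Z})$ --- this surjectivity, generated by the two Dehn twists along $a$ and $b$, is the topological input I would cite or establish.

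Next I would combine this with Corollary~\ref{corollary d-d' homology}, which tells us that $d-d'$ (and more generally $d-d'\circ\Phi$) factors through $[-]:\pi_1^{\rm free}(S)\to H_1(S,\mathbb{Z})$ as a $\mathbb{Z}$-linear map. The subtlety is that $d$ itself does not factor through homology, so I must be careful: I would express everything through the $\mathbb{Z}$-linear functionals $[d]$ and $[d']$ on $H_1(S,\mathbb{Z})\cong\mathbb{Z}^2$ determined by the values $d(a),d(b)$ and $d'(a),d'(b)$. Using condition (2'') --- namely that (1) holds iff there is $\Phi$ with $[d-d'\circ\Phi]=0$ --- the problem becomes: when can a linear functional with integer coordinates $(d'(a),d'(b))$ be carried to $(d(a),d(b))$ by the transpose (or inverse-transpose) action of some matrix in $\mathrm{SL}_2(\mathbb{Z})$? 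This is the heart of the argument.

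The arithmetic core is then the classical fact that the $\mathrm{SL}_2(\mathbb{Z})$-orbit of a nonzero integer vector $(m,n)\in\mathbb{Z}^2$ is exactly the set of integer vectors with the same gcd: the orbit of $(m,n)$ under $\mathrm{SL}_2(\mathbb{Z})$ equals $\{(p,q)\in\mathbb{Z}^2 : \gcd(p,q)=\gcd(m,n)\}$, which follows from the Smith normal form / Euclidean algorithm, since one can reduce any primitive vector to $(1,0)$. I would phrase the functional $[d]$ via its coordinate vector $(d(a),d(b))$ and observe that the relevant invariant of an $\mathrm{SL}_2(\mathbb{Z})$-orbit is precisely this gcd, with the degenerate cases (one coordinate zero, whence the convention $\gcd(0,n)=n$) handled by the same normal-form reduction. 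Thus $[d]$ lies in the same orbit as $[d']$ --- equivalently some $\Phi$ satisfies $[d-d'\circ\Phi]=0$ --- if and only if $\gcd(d(a),d(b))=\gcd(d'(a),d'(b))$, giving the equivalence of (1) and (2).

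The main obstacle, and the step requiring the most care, is the precise identification of the action of $\MCG(S,M)$ on $H_1(S,\mathbb{Z})$ as all of $\mathrm{SL}_2(\mathbb{Z})$: one must confirm that fixing the marked points on the boundary does not shrink the induced homological action below the full special linear group, and that no additional constraint (for instance, from the determinant being forced to $+1$ by orientation-preservation, or from the boundary-fixing condition) obstructs realizing any prescribed matrix. I anticipate that the $\det=+1$ constraint is exactly what the gcd invariant is insensitive to --- since gcd ignores sign and $\mathrm{GL}_2(\mathbb{Z})$ and $\mathrm{SL}_2(\mathbb{Z})$ have the same orbits on the gcd of integer vectors --- so the orientation constraint causes no loss. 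Verifying this carefully, and reconciling the free-homotopy-level statement of (2') with the homology-level statement (2''), is where the genuine work lies; the remainder is the standard Euclidean-algorithm argument for $\mathrm{SL}_2(\mathbb{Z})$-orbits.
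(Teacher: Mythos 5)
Your overall strategy matches the paper's: reduce via Theorem~\ref{thmAG} to the existence of $\Phi\in\MCG(S,M)$ realizing a prescribed relation between $(d(a),d(b))$ and $(d'(a),d'(b))$, use the surjection $\MCG(S,M)\to{\rm SL}(2,\mathbb Z)$ (Proposition~\ref{MCG}), and finish with the classification of ${\rm SL}(2,\mathbb Z)$-orbits on $\mathbb Z^2$ by the gcd (Lemma~\ref{orbit}). But there is a genuine gap at what you call the ``heart of the argument''. To turn the condition $d(a)=d'(\Phi(a))$, $d(b)=d'(\Phi(b))$ into the orbit statement that $(d(a),d(b))$ is obtained from $(d'(a),d'(b))$ by the matrix $M=G^\epsilon(\Phi)$, you must evaluate $d'$ on the simple closed curves $\Phi(a)$ and $\Phi(b)$ knowing only their homology classes, i.e.\ you need $d'(\gamma)=\lambda d'(a)+\mu d'(b)$ whenever $\gamma$ is a simple closed curve with $[\gamma]=\lambda[a]+\mu[b]\neq 0$. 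You propose to ``express everything through the $\mathbb Z$-linear functionals $[d]$ and $[d']$ determined by the values $d(a),d(b)$ and $d'(a),d'(b)$'', but no such functionals exist a priori: only the \emph{difference} of two degree-$1$ maps factors through $[-]$ (Corollary~\ref{corollary d-d' homology}), and $d$ itself is genuinely non-linear on $\pi_1^{\rm free}(S)$ --- the curve $c$ around the boundary has $[c]=0$ yet $d(c)=2$ (Remark~\ref{remark}). Acknowledging the subtlety and then treating $(d'(a),d'(b))$ as the coordinate vector of a functional does not resolve it; writing $[d-d'\circ\Phi]=[d-d_0]-[d'-d_0]\circ\Phi_*-[d_0\circ\Phi-d_0]$ for a reference cut $d_0$ shows the leftover term $[d_0\circ\Phi-d_0]$ is exactly what your reduction sweeps under the rug.

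The missing statement is precisely Proposition~\ref{propgenus1.2}, and it is the technical core of the paper: one first proves the analogue on the once-punctured torus via the local moves $(M33)$, $(M42)$, $(M51)$, $(M60)$ of Lemmas~\ref{move33}--\ref{move60} (showing $d(\gamma)$ depends only on $[\gamma]$ for curves with simple lift to $\mathbb R^2\setminus\mathbb Z^2$, then computing $d$ on the explicit representative $a^\lambda b^\mu$), and then transfers to the torus with boundary by splitting $\overline{\gamma}^\Delta$ into contributions from triangles based on the boundary (handled by the intersection-number Lemma~\ref{lemma intersection}) and from the two uncontractible triangles. None of this follows formally from (2') or (2''). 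By contrast, the step you single out as the main obstacle --- that $\MCG(S,M)$ surjects onto ${\rm SL}(2,\mathbb Z)$ in its action on $H_1(S,\mathbb Z)$ --- is comparatively routine (extend homeomorphisms to the once-punctured torus and invoke Nielsen's theorem, as in Proposition~\ref{MCG}). The ${\rm SL}_2(\mathbb Z)$-orbit/gcd part of your argument is correct and is the paper's Lemma~\ref{orbit}.
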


This result is proved in subsection \ref{subsection proof}. The idea of the proof is first to understand the action of $\MCG(S,M)$ on $H_1(S,\mathbb Z)$ (Proposition \ref{MCG}), and then to show that $d$ defines a $\mathbb Z$-linear map $[d]:H_1(S,\mathbb Z)\to \mathbb Z$. For both steps we need first to consider the case of the once punctured torus which is easier.

\subsection{The mapping class group: Case of the once punctured torus}\label{classical}

We are interested in the action of $\mathcal{M}(S,M)$ on $H_1(S,\mathbb Z)$. This action is well known for the once punctured torus $\Sigma_{1,1}$. Our case is slightly more complicated as we have boundary and  do not impose homeomorphism to be the identity on it. We start this section by recalling results on the classical case of the once punctured torus that are useful for our purpose.

Let $\Sigma_{1,1}$ be the once punctured torus. Its fundamental group $\pi_1(\Sigma_{1,1})$ is free of rank two. Any choice of generators $a$ and $b$ induces bijections $\pi_1(\Sigma_{1,1})\simeq \mathbb{F}_2$ and $H_1(\Sigma_{1,1},\mathbb Z)\simeq \mathbb Z^2$. The abelianization map $\pi_1(\Sigma_{1,1})\to H_1(\Sigma_{1,1},\mathbb Z)$ induces a group morphism 
$$f_2:{\rm Out}(\pi_1(\Sigma_{1,1}))\to {\rm Aut}(H_1(\Sigma,\mathbb Z))\simeq {\rm GL}(2,\mathbb Z)$$ where ${\rm Out}(\pi_1(\Sigma_{1,1}))={\rm Aut}(\pi_1(\Sigma_{1,1}))/{\rm Inn}(\pi_1(\Sigma_{1,1})$ is the quotient of automorphisms by inner automorphisms.

The mapping class group $\MCG(\Sigma_{1,1})=\MCG^{\partial \Sigma_{1,1}}(\Sigma_{1,1})$ acts naturally on the fundamental group by outer automorphisms. Hence we have a map $f_1:\MCG(\Sigma_{1,1})\to {\rm Out}(\pi_1(\Sigma_{1,1}))$. Consider the following composition 
$$\xymatrix{f: \MCG(\Sigma_{1,1})\ar[r]^{f_1} & {\rm Out}(\pi_1(\Sigma_{1,1}))\ar[r]^{f_2}& {\rm GL}(2,\mathbb Z).}$$ The image is clearly contained in ${\rm SL}(2,\mathbb Z)$ since the homeomorphisms we are considering preserve orientation. Denote by $\tau_a$ and $\tau_b$ the Dehn twists around $a$ and $b$.  One verifies that 
$$\xymatrix@R=0.1cm{\tau_a: a\ar@{|->}[r] & a & & \tau_b: a\ar@{|->}[r] & ab\\ \quad b\ar@{|->}[r] & ba & & \quad  b\ar@{|->}[r] & b. }$$
Therefore we obtain $$f(\tau_a)=\begin{pmatrix}1 & 1\\ 0 &1\end{pmatrix} \qquad \textrm{and }\qquad f(\tau_b)=\begin{pmatrix}1 & 0\\ 1 &1\end{pmatrix}.$$ These two matrices generate ${\rm SL}(2,\mathbb Z)$ hence the image of  $f$ is ${\rm SL}(2,\mathbb Z)$.  Now by a result due to Nielsen the map $f_2$ is injective (see \cite[p. 233]{FM}). Hence if we denote by ${\rm Out}^+(\pi_1(\Sigma_{1,1}))$ the preimage of ${\rm SL}(2,\mathbb Z)$ through $f_2$ we get the surjective composition 
\begin{equation}\label{morphism}\xymatrix{ \MCG(\Sigma_{1,1})\ar[r]^-{f_1}  &{\rm Out}^{+}(\pi_1(\Sigma_{1,1}))\ar[r]^-{f_2}_-{\sim} & {\rm SL}(2,\mathbb Z)}.\end{equation} Note that it turns out that $f_1$ is also an isomorphism (see \cite[p.57]{FM}).

\subsection{The mapping class group: Case of the torus with one boundary component}

Let glue a once punctured disc on the boundary of $S$, we obtain a once punctured torus $\Sigma_{1,1}$. Denote by $i:S\to \Sigma_{1,1}$ the inclusion. There is a deformation retract $r:\Sigma_{1,1}\to S$ such that $r\circ i={\rm id}_S$, hence there are  isomorphisms $\pi_1(S)\simeq \mathbb F_2$  and $\varphi:H_1(S,M)\to\mathbb Z^2$.

For $\epsilon$ a basis of $H_1(S,\mathbb Z)$ and $\gamma\in \pi_1^{\rm free}(S)$, we denote by $[\gamma]^\epsilon\in\mathbb Z^2$ the coordinates of $\varphi([\gamma])$ in the basis $\varphi(\epsilon)$.

\begin{proposition}\label{MCG}
If $g=b=1$ then the mapping class group $\MCG(S,M)$ is a central extension of ${\rm SL}(2,\mathbb Z)$ by $\mathbb Z$.

 Moreover any basis $\epsilon$ of $H_1(S,\mathbb Z)$ induces a projection $G^{\epsilon}:\MCG(S,M)\to {\rm SL}(2,\mathbb Z)$ which is compatible with the action of $\MCG(S,M)$ on $H_1(S)$. More precisely for any $\Phi\in {\rm Homeo}^+(S,M)$ and for any closed curve $\gamma$ on $S$, we have $[\Phi(\gamma)]^\epsilon=G^\epsilon(\Phi).[\gamma]^\epsilon$.
\end{proposition}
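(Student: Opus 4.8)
The plan is to prove the statement in two parts: first the structural claim that $\MCG(S,M)$ is a central extension of ${\rm SL}(2,\mathbb Z)$ by $\mathbb Z$, and second the compatibility of the induced projection $G^\epsilon$ with the homological action. The central tool is the comparison between $S$ (the torus with one boundary component, with its marked points $M$) and the once punctured torus $\Sigma_{1,1}$ obtained by capping off the boundary with a once punctured disc. The inclusion $i\colon S\to \Sigma_{1,1}$ and the deformation retract $r$ described just before the statement give $r\circ i=\mathrm{id}_S$ and isomorphisms $\pi_1(S)\simeq \mathbb F_2$, $\varphi\colon H_1(S)\to \mathbb Z^2$, so that everything homological on $S$ is identified with the corresponding object on $\Sigma_{1,1}$, where section~\ref{classical} already supplies the full description via the isomorphism $f_2\circ f_1\colon \MCG(\Sigma_{1,1})\xrightarrow{\sim}{\rm SL}(2,\mathbb Z)$.

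First I would establish the central extension. The natural comparison map here is capping: an orientation-preserving homeomorphism of $S$ fixing $M$ extends (after collapsing the boundary to a puncture, or capping with the punctured disc) to an orientation-preserving homeomorphism of $\Sigma_{1,1}$, yielding a group homomorphism $\MCG(S,M)\to \MCG(\Sigma_{1,1})$. Composed with the isomorphism of \eqref{morphism} this defines the desired projection to ${\rm SL}(2,\mathbb Z)$; surjectivity follows since the Dehn twists $\tau_a,\tau_b$ on $\Sigma_{1,1}$ have preimages (the twists already live away from the boundary). The kernel is exactly the part of the mapping class group that becomes trivial after capping, and the standard capping/forgetting exact sequence for surfaces with boundary identifies this kernel with $\mathbb Z$, generated by the Dehn twist $\tau_{\partial S}$ about a curve parallel to the boundary component. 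I would invoke the Birman-type exact sequence (the reference \cite{FM}, and presumably \cite{PR} flagged in the acknowledgments for the subtlety with marked points) to get both that the kernel is cyclic generated by $\tau_{\partial S}$ and that it is \emph{central}: a boundary-parallel Dehn twist commutes with every mapping class supported in the interior, which gives centrality directly. This yields the short exact sequence
\[
1\longrightarrow \mathbb Z\longrightarrow \MCG(S,M)\xrightarrow{\ G^\epsilon\ }{\rm SL}(2,\mathbb Z)\longrightarrow 1
\]
with central kernel.

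For the compatibility statement, the key point is that the boundary-parallel twist $\tau_{\partial S}$, which generates the kernel, acts trivially on $H_1(S,\mathbb Z)$: the boundary curve is null-homologous in $S$ (it bounds, being the sum of the two generating handle curves as a commutator, hence zero in homology), so its Dehn twist is homologically trivial. Thus the action of $\Phi\in\MCG(S,M)$ on $H_1(S,\mathbb Z)\simeq\mathbb Z^2$ factors through the quotient ${\rm SL}(2,\mathbb Z)$, and I must check that this factored action is precisely matrix multiplication by $G^\epsilon(\Phi)$ in the basis $\varphi(\epsilon)$. This is where the retract $r$ does the work: since $r\circ i=\mathrm{id}_S$, the isomorphism $\varphi\colon H_1(S)\to\mathbb Z^2$ is compatible with the corresponding isomorphism $H_1(\Sigma_{1,1})\to\mathbb Z^2$, and the homological action of the capped-off homeomorphism on $\Sigma_{1,1}$ is by construction the image $f_2(f_1(\cdot))=G^\epsilon(\Phi)$. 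Chasing the commuting square $\varphi\circ\Phi_* = G^\epsilon(\Phi)\circ\varphi$ on homology then gives exactly $[\Phi(\gamma)]^\epsilon = G^\epsilon(\Phi).[\gamma]^\epsilon$ for every closed curve $\gamma$.

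I expect the main obstacle to be the precise form of the central extension, specifically pinning down the kernel as $\mathbb Z$ and verifying centrality in the presence of the marked points $M$ on the boundary. Because we do \emph{not} require homeomorphisms to fix the boundary pointwise but only to fix $M$ and preserve orientation, the relevant mapping class group is not literally $\MCG^{\partial S}(\Sigma_{1,1})$, and the capping sequence must be applied with care to the configuration of marked points; this is exactly the delicate issue the acknowledgment attributes to L.~Paris and the reference \cite{PR}. I would isolate this as the one genuinely technical lemma, handling the bookkeeping of marked points separately, while the homological compatibility reduces to the routine diagram chase above once the structural result and the null-homology of $\partial S$ are in hand.
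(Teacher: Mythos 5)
Your overall strategy is the same as the paper's: cap off the boundary with a once-punctured disc, use the classical isomorphism $\MCG(\Sigma_{1,1})\simeq{\rm SL}(2,\mathbb Z)$ from Subsection~\ref{classical} to define the projection, identify the kernel with $\mathbb Z$, and check centrality and the homological compatibility. The surjectivity argument and the diagram chase for $[\Phi(\gamma)]^\epsilon=G^\epsilon(\Phi).[\gamma]^\epsilon$ are fine and match the paper.

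The genuine gap is your identification of the kernel. You assert that the standard capping/Birman sequence identifies the kernel with $\mathbb Z$ generated by the boundary Dehn twist $\tau_{\partial S}$, but that sequence applies to homeomorphisms fixing $\partial S$ pointwise; here the homeomorphisms only preserve $M$ setwise and may rotate the boundary. Consequently the kernel contains the fractional $\tfrac1p$-Dehn twist that cyclically permutes the $p$ marked points (it extends over the capping disc, hence dies in $\MCG(\Sigma_{1,1})$), and $\tau_{\partial S}$ is only its $p$-th power: for $p\ge 2$ your claimed generator generates a proper subgroup of the kernel. The kernel is still infinite cyclic, but this requires an actual argument, which is exactly the content the paper supplies and which you defer to a ``technical lemma'' without proving: one uses Nielsen's theorem to isotope a kernel element to fix the generating curves $a,b$ pointwise, cuts $S$ along $a\cup b$ to get an annulus with marked points on one boundary component, and computes $\MCG^{B_1}(A,M)\simeq\mathbb Z$ via the universal cover (Lemma~\ref{lemma MCG}). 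Your centrality argument inherits the same problem: ``a boundary-parallel twist commutes with classes supported in the interior'' does not cover the correct generator, nor elements of $\MCG(S,M)$ that permute $M$ and hence are not supported in the interior; the paper instead isotopes an arbitrary $\Phi'$ to fix a curve $c$ surrounding $\partial S$ pointwise and uses that $\MCG^{c}(A,M)\simeq\mathbb Z$ is abelian. Note that getting the generator right is not cosmetic: Remark~\ref{remark Paris} records that the extension is not split and that $\MCG(S,M)$ is a braid group, facts incompatible with a picture in which the kernel is the boundary twist sitting inside a standard capping sequence.
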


We start with the following lemma for which we give a proof for completness. The argument follows the proof of Proposition 2.4 in \cite{FM}.

\begin{lemma}\label{lemma MCG}
Let $A$ be an annulus with boundary components $B_1$ and $B_2$ and with a finite set $M$ of marked points on $B_2$. Then there is an isomorphism of groups $\MCG^{B_1}(A,M)\simeq \mathbb Z$.
\end{lemma}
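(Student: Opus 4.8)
The plan is to mirror the proof of Proposition 2.4 of \cite{FM}, replacing the Dehn twist by a fractional twist that rotates the marked boundary by one marked point. I would model $A$ as $S^1\times[0,1]$ with $B_1=S^1\times\{0\}$ and $B_2=S^1\times\{1\}$, and place the $n:=|M|$ marked points evenly on $B_2$. Fix the vertical arc $\delta_0=\{1\}\times[0,1]$ joining the point $p_0=(1,0)\in B_1$ to a marked point $m_0\in M$. The candidate generator is $\rho\in\Homeo^{+,B_1}(A,M)$ defined by $\rho(e^{2\pi i\theta},t)=(e^{2\pi i(\theta+t/n)},t)$; it is the identity on $B_1$, rotates $B_2$ by one marked point, and satisfies $\rho^n=$ the Dehn twist about the core of $A$.

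First I would introduce a winding invariant to detect nontriviality. Passing to the universal cover $\tilde A=\mathbb R\times[0,1]$, any $\phi$ that is the identity on $B_1$ admits a canonical lift $\tilde\phi$ that is the identity on $\mathbb R\times\{0\}$; setting $w(\phi)\in\mathbb Z$ to be $n$ times the first coordinate of $\tilde\phi(0,1)$ (equivalently, the winding of the arc $\phi(\delta_0)$), I would check that $w$ is constant on isotopy classes: during an isotopy the endpoint $\phi_s(m_0)$ moves continuously inside the discrete set $M$, hence is constant, so the lifted endpoint cannot jump. A direct computation with $\tilde\rho(x,t)=(x+t/n,t)$ gives $w(\rho^k)=k$, so $\rho$ has infinite order.

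The heart of the argument is to show that $\rho$ generates $\MCG^{B_1}(A,M)$ via the Alexander method. Given $\phi$, the arc $\phi(\delta_0)$ runs from $p_0$ to a marked point, and by the classification of arcs in the annulus (lifts to $\tilde A$ are determined up to isotopy by their endpoints) it is isotopic rel endpoints to $\rho^k(\delta_0)$ for a unique $k$; then $\psi:=\rho^{-k}\phi$ fixes $\delta_0$ up to isotopy rel endpoints, so after an isotopy supported near $\delta_0$ I may assume $\psi$ fixes $\delta_0$ pointwise. Since $\psi$ is orientation preserving, fixes $B_1$ and the endpoints of $\delta_0$, and preserves $M$, its restriction to the arc obtained by cutting $B_2$ at $m_0$ preserves the order of the marked points and fixes its endpoints, hence fixes every marked point; one then isotopes $\psi$, rel $B_1$ and rel $M$, to the identity on all of $B_2$. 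Now $\psi$ fixes $\partial A\cup\delta_0$ pointwise, so cutting $A$ along $\delta_0$ yields a disk on which Alexander's lemma gives an isotopy of $\psi$ to the identity rel boundary. This shows $\phi\simeq\rho^k$, so $k\mapsto\rho^k$ is a surjection $\mathbb Z\to\MCG^{B_1}(A,M)$, injective because $\rho$ has infinite order; hence the group is infinite cyclic.

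The main obstacle I anticipate is the careful bookkeeping of the marked points in the Alexander step: because $M$ is only fixed setwise, I must argue (via orientation preservation on the boundary arc) that fixing $\delta_0$ forces the remaining marked points to be fixed pointwise, and then promote the combinatorial fixing of $\partial A$ to an honest ambient isotopy to the identity before invoking the disk lemma. Everything else reduces to the standard annulus computation.
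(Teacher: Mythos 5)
Your proof is correct and follows essentially the same route as the paper's: both mirror Proposition 2.4 of \cite{FM}, extract the integer invariant from the lift to the universal cover $\mathbb R\times[0,1]$ (your winding number $w$ is the paper's translation integer on $\mathbb R\times\{1\}$), and identify the $\tfrac{1}{n}$-Dehn twist as the generator. The only difference is organizational --- you build the map $\mathbb Z\to\MCG^{B_1}(A,M)$ and spell out the Alexander-method step that the paper defers to \cite[Prop 2.4]{FM}, while the paper builds the inverse homomorphism directly.
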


\begin{proof}
Denote by $\pi:\widetilde A=\mathbb R\times [0,1]\to A$ the universal cover of $A$, and assume that $\pi^{-1}(M)=\mathbb Z\times\{1\}$. Let $\Phi\in {\rm Homeo}^{+,B_1}(A,M)$ and $\widetilde \Phi$ be the lifting of $\Phi$ on $\widetilde A$ that fixes $(0,0)$. The homeomorphism $\Phi$ fixes globally $B_2$ since $B_2$ is a boundary component and since $B_1$ is fixed. Hence $\widetilde{\Phi}$ induces a homeomorphism of $\mathbb R\times \{1\}$. Moreover this homeomorphism induces a bijection $\mathbb Z\times\{1\}\to \mathbb Z\times \{1\}$ since $\Phi$ fixes globally $M$. Thus $\widetilde{\Phi}_{|_{\mathbb R\times \{1\}}}$ is a translation by an integer. Therefore we obtain a morphism of groups $\MCG^{B_1}(A,M)\to \mathbb Z$. It is surjective since for any $n\in \mathbb Z$ the transformation given by the matrix $$\begin{pmatrix} 1
 & n\\ 0 & 1\end{pmatrix}$$ is a homeomorphism of $\widetilde A$ which is $\pi$-equivariant. It induces then a homeomorphism of $A$ that preserves globally $M$ and fixes pointwise $B_1$. Now let $\Phi$ such that $\widetilde \Phi$ fixes pointwise $\mathbb Z\times \{1\}$. This homeomorphism is then isotopic to a homeomorphism $\Phi'$ such that $\widetilde{\Phi'}$ is the identity on $\mathbb R\times \{1\}$. Then the injectivity can be proven as in \cite[Prop 2.4]{FM}.
 
The mapping class group of this annulus is then generated by an homeomorphism that we could call the $\frac{1}{p}$-Dehn twist, where $p=\sharp M$.

\[\scalebox{0.7}{
\begin{tikzpicture}[scale=0.8,>=stealth]
\draw [black, thick, fill=gray!30](0,0) circle (0.5);
\draw [black, thick](0,0) circle (2);
\draw[->,thick] (3,0)--(5,0);
\node at (4,0.5) {$\Phi$};
\draw [black, thick, fill=gray!30](8,0) circle (0.5);
\draw [black, thick](8,0) circle (2);
\draw[red, thick] (-2,0)-- (-0.5,0);
\draw[red, thick] (6,0).. controls (6,0) and (7,0.5).. (8,0.5);
\draw[red, thick] (14,0).. controls (15,1) and (16.5,1).. (16.5,0);
\draw [black, thick, fill=gray!30](16,0) circle (0.5);
\draw [black, thick](16,0) circle (2);
\draw[->,thick] (11,0)--(13,0);
\node at (12,0.5) {$\Phi$};

\node at (-0.5,0) {$\bullet$};
\node at (0,0.5) {$\bullet$};
\node at (0.5,0) {$\bullet$};
\node at (0,-0.5) {$\bullet$};
\node at (7.5,0) {$\bullet$};
\node at (8,0.5) {$\bullet$};
\node at (8.5,0) {$\bullet$};
\node at (8,-0.5) {$\bullet$};
\node at (15.5,0) {$\bullet$};
\node at (16,0.5) {$\bullet$};
\node at (16.5,0) {$\bullet$};
\node at (16,-0.5) {$\bullet$};

\end{tikzpicture}}\]

\end{proof}

\begin{proof}[Proof of Proposition \ref{MCG}.]

Fix $a$ and $b$ simple closed curves in minimal position on $S$ and generating $H_1(S,M)$, and fix an inclusion $i:S\to \Sigma_{1,1}$.  Let $\Phi$ be in $\Homeo^+(S,M)$, then $\Phi$ can be extended to a positive homeomorphism $\bar{\Phi}$ of the once punctured torus $\Sigma_{1,1}$.  Moreover $\bar{\Phi}$ is  uniquely defined up to homotopy since the homeomorphisms of the once punctured disc (not necessarily identity on the boundary) are isotopic to the identity. Moreover if $\Phi_1$ and $\Phi_2$ are equal in $\MCG(S,M)$ we cleraly have $\bar{\Phi}_1=\bar{\Phi}_2$ in $\MCG(\Sigma_{1,1})$. 

Now let $\Phi$ be a homeomorphism of $\Sigma_{1,1}$, and let $c$ be a curve surrounding the puncture. Then $\Phi(c)$ is isotopic to $c$, hence there exists $\Phi'$ isotopic to $\Phi$ such that $\Phi'$ fixes $c$ pointwise (by \cite[Proposition 1.11]{FM}). Hence the map $\MCG(S,M)\to \MCG(\Sigma_{1,1})$ is surjective.

Thus by \ref{morphism}, we have a surjective morphism 

$$\xymatrix{G:\MCG(S,M)\ar[r] & \MCG(\Sigma_{1,1}) \ar[r]& {\rm Out}^+(\mathbb F_2)\ar[r]^\sim_{\rm Nielsen} & {\rm SL}(2,\mathbb Z)}$$
which is compatible with the action of $\MCG(S,M)$ on $H_1(S,\mathbb Z)$ as $([i(a)],[i(b)])$ is a basis of $H_1(\Sigma_{1,1},\mathbb Z)$.

Now let $\Phi$ be a homeomorphism of $S$ fixing globally $M$ which is the kernel of $G$. Then by Nielsen theorem $\Phi$ acts trivially on $\pi_1(S)$, hence $\Phi(a)$ is isotopic to $a$ and $\Phi(b)$ is isotopic to $b$. Then up to isotopy we can assume that $\Phi$ fixes pointwise $a$ and $b$. If we cut $S$ along $a\cup b$ we obtain an annulus $A$ as in Lemma \ref{lemma MCG}, and $\Phi$ induces a homeomorphism of $A$ that fixes globally $M$ and pointwise the boundary component $a\cup b$. Hence by Lemma~\ref{lemma MCG}, we obtain the following short exact sequence of groups :
\[\xymatrix{(*)&  0\ar[r] & \mathbb Z\ar[r] & \MCG(S,M)\ar[r]^G & {\rm SL}(2,\mathbb Z)\ar[r] & 1}.\] 

\[\scalebox{0.9}{
\begin{tikzpicture}[scale=0.8,>=stealth]

\draw[->] (0,0) to (0,1.5); \draw[ ->] (3,0) to (3,1.5);\draw[ ->] (0,0) to (1.5,0);\draw[thick, ->] (0,3) to (1.5,3);
\draw[thick] (0,0) rectangle (3,3);
\node at (-0.5,1.5) {$a$};
\node at (1.5,3.5) {$b$};
\node at (1.5,-0.5) {$b$};
\node at (3.5,1.5) {$a$};

\draw[thick, fill=gray!30] (1.5,1.5) circle (0.5);

\draw[thick, red] (1.5,1.5) circle (1);
\draw[->,red] (1.48,2.5)--(1.52,2.5);

\node at (1,1.5) {$\bullet$};
\node at (1.5,1) {$\bullet$};
\node at (1.5,2) {$\bullet$};
\node at (2,1.5) {$\bullet$};

\end{tikzpicture}}\]

Let us prove now that ${\rm Ker}\; G$ is in the center of $\MCG(S,M)$. Let $c$ be a curve surrounding $\partial S$, it cuts out $S$ into two connected components. Denote by $A$ the connected component containing $\partial S$, it is homeomorphic to an annulus with marked points on one boundary component.  Let $\Phi$ a homeomorphism in the kernel of $G$. Then up to isotopy we can assume that $\Phi$ is the identity on $S\backslash A$. Now let $\Phi'$ be a homeomorphism of $S$ fixing globally $M$, then up to isotopy we can assume that $\Phi'$ fixes pointwise $c$. Then we clearly have $\Phi\circ\Phi'=\Phi'\circ\Phi$ on $S\backslash A$. And we have $(\Phi'\circ\Phi)_{|_A}=(\Phi\circ \Phi')_{|_A}$ as elements in $\MCG^c(A,M)$ since the group $\mathbb Z$ is abelian. 

\end{proof}

\begin{remark}\label{remark Paris}
The exact sequence $(*)$ is not split and $\MCG(S,M)$ is not the direct product of ${\rm SL}(2,\mathbb Z)$ by $\mathbb Z$ as stated in a previous version. In fact $\MCG(S,M)$ is isomorphic to the Artin braid group $\langle A,B\, |\, ABA=BAB\rangle$ (see \cite{Chow}). We also refer to \cite[Section 5]{PR} for a precise and geometric desciption of the center of this group.
\end{remark}

\subsection{The grading factors through homology: Case of the once punctured torus}\label{subsection 2}

Let $\Sigma:=\Sigma_{1,1}$ be the once punctured torus, and $\Delta$ be an ideal triangulation of $\Sigma$.  Any ideal triangulation of $\Sigma$ has exactly two triangles $\tau$ and $\tau'$ and the associated quiver $Q^\Delta$ is the Markoff quiver. 

\[\scalebox{0.8}{
\begin{tikzpicture}[scale=1,>=stealth]
\node (A1) at (0,0) {$1$};
\node (A2) at (2,2) {$2$};
\node (A3) at (4,0) {$3$};
\draw[-> ] (0.1,0.3)-- node[xshift=-4mm,yshift=1mm]{$\alpha_{12}$}(1.7,1.9);
\draw[-> ] (0.3,0.1)-- node[xshift=4mm]{$\alpha'_{12}$}(1.9,1.7);
\draw[-> ] (2.1,1.7)-- node[xshift=-4mm]{$\alpha'_{23}$}(3.7,0.1);
\draw[-> ] (2.3,1.9)-- node[xshift=4mm]{$\alpha_{23}$}(3.9,0.3);
\draw[<- ] (0.4,0)-- node[yshift=3mm]{$\alpha'_{31}$}(3.6,0);
\draw[<- ] (0.4,-0.2)-- node[yshift=-2mm]{$\alpha_{31}$}(3.6,-0.2);

\end{tikzpicture}}\]

The aim of this subsection is to prove the following.

\begin{proposition}\label{propgenus1}
Let $\Sigma$ be the once punctured torus, and $\Delta$ be an ideal triangulation of $\Sigma$. Let $d$ be a degree-$1$ map of $Q^\Delta$. Let $\gamma$ be a simple closed non contractible curve on $\Sigma$. Then $d(\gamma)$ only depends on the homology class $[\gamma]$ of $\gamma$. 

Moreover, if $a$ and $b$ are simple closed curves such that $([a],[b])$ is a basis of $H_1(\Sigma,\mathbb Z)$ and if $[\gamma]=\lambda[a]+\mu[b]\neq 0$ then $d(\gamma)=\lambda d(a)+\mu d(b)$. 
\end{proposition}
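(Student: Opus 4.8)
### Proof strategy for Proposition~\ref{propgenus1}

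\textbf{Overview.} The plan is to establish the two assertions in the natural order: first that $d(\gamma)$ depends only on $[\gamma]\in H_1(\Sigma,\mathbb{Z})$, and then the explicit linear formula $d(\gamma)=\lambda d(a)+\mu d(b)$. The key observation is that on the once punctured torus $H_1(\Sigma,\mathbb{Z})\simeq\mathbb{Z}^2$ is free of rank $2$, so I can hope to reduce everything to a handful of computations on representatives of homology classes and then propagate the answer by $\mathrm{SL}(2,\mathbb{Z})$-symmetry, using the mapping class group material already developed in Section~\ref{classical}.

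\textbf{Step 1: reduce ``depends only on homology'' to a comparison statement.} I would not try to prove the homology-invariance directly from the definition of $d(\gamma)=d(\overline{\gamma}^\Delta)$, since $d$ does \emph{not} factor through $[-]$ in general (as noted before Lemma~\ref{lemma d-d' homology}). Instead I would exploit the degree-$1$ condition more cleverly. The most economical route is to fix one convenient degree-$1$ map $d_0$ for which the claim is transparent—for instance a map constant on the three pairs of arrows, symmetric under the Markoff symmetry—and then apply Corollary~\ref{corollary d-d' homology} to the pair $(\Delta,d)$ and $(\Delta,d_0)$: the difference $d-d_0$ factors through $[-]$ and induces a $\mathbb{Z}$-linear map $[d-d_0]:H_1(\Sigma,\mathbb{Z})\to\mathbb{Z}$. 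Thus it suffices to prove the statement for the single model map $d_0$, because $d(\gamma)=(d-d_0)(\gamma)+d_0(\gamma)$ and the first summand already depends only on $[\gamma]$.

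\textbf{Step 2: prove it for the model map via the mapping class group.} For the model map $d_0$ I would compute $d_0(a)$, $d_0(b)$ directly on a standard triangulation where $a,b$ are the two obvious simple closed curves meeting each internal arc in a controlled way, reading off $\overline{a}^\Delta,\overline{b}^\Delta$ from the Markoff quiver. Then for a general simple closed $\gamma$ with $[\gamma]=\lambda[a]+\mu[b]$ and $\gcd(\lambda,\mu)=1$ (the only primitive classes are realized by simple closed curves), I use that $\mathrm{SL}(2,\mathbb{Z})$ acts transitively on primitive vectors of $\mathbb{Z}^2$, together with Proposition~\ref{MCG}/the surjection $f:\MCG(\Sigma_{1,1})\to\mathrm{SL}(2,\mathbb{Z})$ of Section~\ref{classical}: choose $\Phi\in\MCG(\Sigma_{1,1})$ with $G(\Phi)$ sending $(1,0)\mapsto(\lambda,\mu)$, so $\Phi(a)$ is freely homotopic to $\gamma$. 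Since $d_0$ is a derived/flip invariant on free homotopy classes (this is exactly the mechanism behind Corollary~\ref{corollary d-d' homology}: applying $\Phi$ corresponds to a sequence of graded flips under which the degree of a closed curve is unchanged), I get $d_0(\gamma)=d_0(\Phi(a))=\lambda\,d_0(a)+\mu\,d_0(b)$ by decomposing the image vector and checking additivity on the model. This simultaneously proves homology-invariance \emph{and} the linear formula for $d_0$, hence (combining with Step~1) for $d$.

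\textbf{Main obstacle.} The delicate point is Step~2: I must verify that the degree $d_0$ of a simple closed curve is genuinely invariant under the $\MCG$-action, i.e. that $d_0(\Phi(a))$ equals the value predicted by homology. The danger is precisely that $d_0$ does not factor through homology for \emph{all} loops, so I cannot invoke linearity cheaply; I must restrict to simple closed curves and use that any two simple closed curves in the same primitive homology class are related by a homeomorphism of $\Sigma$ (true on the torus), then transport the grading through the associated flip sequence via the argument in the proof of \cite[Lemma 2.14]{AG}. Concretely, the obstacle is controlling $\overline{\Phi(a)}^\Delta$ well enough to apply $d$; I expect the cleanest path is to realize $\Phi$ as a product of the two Dehn twists $\tau_a,\tau_b$ whose action on homology is given by the explicit matrices $f(\tau_a),f(\tau_b)$ computed in Section~\ref{classical}, and to check the additivity $d_0(\gamma)=\lambda d_0(a)+\mu d_0(b)$ inductively along this word, the base cases $\lambda=0$ or $\mu=0$ and the single relation $\gcd(\lambda,\mu)=1$ keeping the induction finite.
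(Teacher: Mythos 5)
Your Step~1 reduction is sound but buys essentially nothing. By Lemma~\ref{lemma d-d' homology} it does suffice to treat a single degree-$1$ map $d_0$; however, no degree-$1$ map is ``transparent'' in the sense you hope for. There is no integer-valued choice that is symmetric and constant on the arrows (the symmetric solution of $d(\alpha_\tau)+d(\beta_\tau)+d(\gamma_\tau)=1$ is $1/3$), and more importantly no degree-$1$ map has the property that $d_0(\gamma)$ is visibly a homology invariant: every degree-$1$ map assigns the value $2$ to the peripheral simple closed curve $c$, whose homology class vanishes (Remark~\ref{remark}). So the model map presents exactly the same difficulty as $d$, and the entire weight of the argument falls on your Step~2.

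Step~2 is where the genuine gap lies. Granting that $\Phi(a)$ is freely homotopic to $\gamma$ (which does hold on $\Sigma_{1,1}$, since essential non-peripheral simple closed curves there are classified up to isotopy by their primitive homology class), the identity you still need is $d(\Phi(a))=\lambda d(a)+\mu d(b)$ --- and this \emph{is} the proposition; your justification via ``applying $\Phi$ corresponds to graded flips under which the degree of a closed curve is unchanged'' does not deliver it. Making that mechanism precise along the lines of Corollary~\ref{corollary d-d' homology}, the flip sequence carrying $\Phi^{-1}(\Delta)$ back to $\Delta$ transports the grading to a new degree-$1$ map $d''$ on $\Delta$ satisfying $d''(\Phi(a))=d(a)$; comparing $d$ with $d''$ via Lemma~\ref{lemma d-d' homology} then yields $d(\Phi(a))=d(a)+[d-d'']\bigl(\lambda[a]+\mu[b]\bigr)$ for an \emph{unknown} linear functional $[d-d'']$ on $H_1(\Sigma,\mathbb Z)$, and nothing in your argument determines its values on $[a]$ and $[b]$. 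The proposed induction over a word in $\tau_a,\tau_b$ suffers from the same defect: it requires a twist formula relating $d(\tau_a(\delta))$ to $d(\delta)$ for an arbitrary simple closed curve $\delta$, which you do not establish and which is of the same order of difficulty as the statement being proved. The paper circumvents all of this combinatorially: it encodes free homotopy classes as cyclic reduced words in the crossing sequence with the three arcs, proves four explicit degree-preserving local moves (Lemmas~\ref{move33}, \ref{move42}, \ref{move51}, \ref{move60}) that connect the lift of $\gamma$ in $\mathbb R^2\setminus\mathbb Z^2$ to the lift of $a^\lambda b^\mu$, and then computes $\overline{a^\lambda b^\mu}^\Delta=\lambda\overline a^\Delta+\mu\overline b^\Delta+(\alpha'_{12}+\alpha'_{23}+\alpha'_{31})-(\alpha_{12}+\alpha_{23}+\alpha_{31})$ directly, so that the degree-$1$ condition kills the correction term. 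Some such explicit control of $\overline{\gamma}^\Delta$ is unavoidable, and it is exactly what your outline omits.
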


The proof of this Proposition requires several lemmas (\ref{move33}, \ref{move42}, \ref{move51} and \ref{move60}) that are proved in the rest of the section.

\medskip

The once punctured torus $\Sigma$ with an ideal triangulation is presented  in Figure \ref{figure torus} by a square where parallel sides are identified (note that the puncture is at the corner of the square).  We fix a point $x_0$  in the triangle $\tau'$ containing the arrows $\alpha'_{12}$, $\alpha_{23}'$ and $\alpha_{31}'$.  We define $a$ and $b$ the generators of $\pi_1(\Sigma_{1,1},x_0)$ such that $\overline{a}^\Delta=\alpha_{23}-\alpha'_{23}$ and $\overline{b}^{\Delta}=\alpha_{12}-\alpha'_{12}$ (drawn in green in Figure \ref{figure torus}).

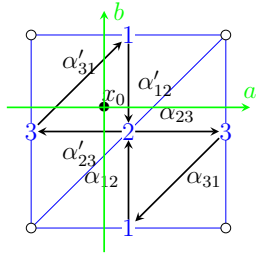
\begin{figure}\scalebox{0.8}{
\begin{tikzpicture}[scale=0.8,>=stealth]

\draw[blue] (0,0)--node (A1)[fill=white, inner sep=0pt]{$1$}(4,0)-- node (A3) [fill=white, inner sep=0pt]{$3$}(4,4)--node (A2)[fill=white, inner sep=0pt]{$2$}(0,0)--node (B3)[fill=white, inner sep=0pt]{$3$}(0,4)--node (B1)[fill=white, inner sep=0pt]{$1$}(4,4);
\node at (1.5,2.5) {$\bullet$};
\node at (1.7,2.7) {$x_0$};
\draw [->, thick] (A1)--node [left] {$\alpha_{12}$}(A2);\draw [->, thick] (A2)--node [above] {$\alpha_{23}$}(A3); \draw [->, thick] (A3)--node [right]{$\alpha_{31}$}(A1);

\draw [->, thick] (B3)--node [above] {$\alpha'_{31}$}(B1);\draw [->, thick] (B1)--node [right] {$\alpha'_{12}$}(A2); \draw [->, thick] (A2)--node [below]{$\alpha'_{23}$}(B3);

\draw[thick, green, ->] (-0.5,2.5)--(4.5,2.5);
\node[green] at (4.5,2.8) {$a$};
\draw[thick, green, ->] (1.5,-0.5)--(1.5,4.5);
\node[green] at (1.8,4.5){$b$};
\draw[fill=white] (0,0) circle (0.1);\draw[fill=white] (0,4) circle (0.1);\draw[fill=white] (4,0) circle (0.1);\draw[fill=white] (4,4) circle (0.1);

\end{tikzpicture}}
\caption{The once punctured torus $\Sigma$ with a triangulation and generators of $\pi_1(\Sigma,x_0)$.}\label{figure torus}
\end{figure}

Let $P:\mathbb R^2\backslash \mathbb Z^2\to \Sigma$ be the natural projection. 
For any loop $\gamma$ on $\Sigma$ starting at $x_0$, there exists $\widetilde{\gamma}:[0,1]\to \mathbb R^2\backslash \mathbb Z^2$ such that $P\circ \widetilde{\gamma}=\gamma$. Then $\widetilde{\gamma}(1)-\widetilde{\gamma}(0)$ is a vector in $\mathbb Z^2$ which is the homology class of $\gamma$ in the basis $([a],[b])$.
Note that if $\gamma$ is simple, so is $\widetilde{\gamma}$.

Let $G$ be the subgroup of words of even length of $\langle i_1,i_2,i_3, \ i_1^2=i_2^2=i_3^2=1\rangle$ . One can associate to $\gamma$ an element $w$ in $G$, determined by the successive arcs of the triangulation crossed by $\gamma$. This map induces an isomorphism of groups $\pi_1(\Sigma,x_0)\to G$. We define $G^{\rm cyc}$ as the set of conjugacy classes of $G$, that is $G^{\rm cyc}$ is the quotient of $G$ by the equivalence relation $w_1w_2\simeq w_2w_1$ where $w_1$ and $w_2$ have both even length. Note that $G^{\rm cyc}$ is in natural bijection with $\pi_1^{\rm free}(\Sigma)$, and so is not a group.

We define map $A:G\to \mathbb{Z}Q_1$ as follows:

$$ A(i_{\ell_1}\ldots i_{\ell_{2p}})= \sum_{k=1}^{p} \alpha_{\ell_{2k-1}\ell_{2k}} +\sum_{k=1}^{p-1}\alpha'_{\ell_{2k}\ell_{2k+1}}$$ for any $w=i_1\ldots i_{2p}$ reduced word in $G$, and where we use the convention $\alpha_{k\ell}=-\alpha_{\ell k}$. We define a map $A^{\rm cyc}:G^{\rm cyc}\to \mathbb Z Q_1$ by $ A^{\rm cyc}(i_{\ell_1}\ldots i_{\ell_{2p}})=A(i_{\ell_1}\ldots i_{\ell_{2p}})+\alpha'_{\ell_{2p}\ell_1},$ if $w=i_1\ldots i_{2p}$ is a reduced word in $G^{\rm cyc}$.

If $w\in G$ is the element corresponding to the loop $\gamma$ then we have $\overline{\gamma}^\Delta=A^{\rm cyc}(w)$.

\begin{lemma}\label{move33} Let $\gamma\in \pi_1(\Sigma,x_0)$ corresponding to a reduced word in $G^{\rm cyc}$ of the shape $w=w_1 i_1 i_2 i_1 i_3 i_1 w_2$. Let $\gamma'\in \pi_1(\Sigma,x_0)$  corresponding to the word $w'=w_1 i_1i_3i_1i_2i_1w_2$, then we have $d(\gamma)=d(\gamma')$. 
\end{lemma}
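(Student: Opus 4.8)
The plan is to prove the stronger equality $\overline{\gamma}^\Delta = \overline{\gamma'}^\Delta$ in $\mathbb{Z}Q_1$; the statement $d(\gamma)=d(\gamma')$ is then immediate, since by definition $d(\gamma)=d(\overline{\gamma}^\Delta)$ and $d$ is a fixed $\mathbb{Z}$-linear map on $\mathbb{Z}Q_1$ (in particular the degree-$1$ condition is not even needed for this lemma). Because $\overline{\gamma}^\Delta=A^{\rm cyc}(w)$ and $\overline{\gamma'}^\Delta=A^{\rm cyc}(w')$, everything reduces to checking that $A^{\rm cyc}(w)=A^{\rm cyc}(w')$.

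First I would record the effect of the move at the level of letters: $w$ and $w'$ have the same length and agree in every position except the two interior letters $i_2$ and $i_3$ of the block $i_1i_2i_1i_3i_1$, which get exchanged. Writing $w=i_{\ell_1}\cdots i_{\ell_{2p}}$ and calling these two positions $q$ and $q+2$, I would use that in the cyclic reading underlying $A^{\rm cyc}$ each consecutive pair $(j,j+1)$ (indices modulo $2p$) contributes exactly one term, of type $\alpha$ when $j$ is odd and of type $\alpha'$ when $j$ is even — a datum depending only on the parity of $j$, not on the letters. Hence the only terms that can change are the four coming from the pairs $(q-1,q)$, $(q,q+1)$, $(q+1,q+2)$, $(q+2,q+3)$; every other term, including all terms internal to $w_1$ and $w_2$ and the closure term $\alpha'_{\ell_{2p}\ell_1}$, is literally the same for $w$ and $w'$.

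The key step is then a parity observation: since the exchanged positions differ by $2$, the pairs $(q-1,q)$ and $(q+1,q+2)$ share a common type $T$, and the pairs $(q,q+1)$ and $(q+2,q+3)$ share the opposite type $T'$. The four affected terms are therefore $T(i_1,i_2)+T'(i_2,i_1)+T(i_1,i_3)+T'(i_3,i_1)$ for $w$ and $T(i_1,i_3)+T'(i_3,i_1)+T(i_1,i_2)+T'(i_2,i_1)$ for $w'$, which are the same sum. This gives $A^{\rm cyc}(w)=A^{\rm cyc}(w')$ and finishes the argument.

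The computation is short, and the only place demanding care is the bookkeeping of which pairs yield $\alpha$- versus $\alpha'$-terms — that is, correctly tracking the parity of $q$ (equivalently of $|w_1|$) and respecting the sign convention $\alpha_{k\ell}=-\alpha_{\ell k}$. The degenerate situations where $w_1$ or $w_2$ is empty, so that the block abuts the cyclic closure, would be checked separately, but the same parity argument applies verbatim to the wrap-around pair, so they pose no genuine obstacle.
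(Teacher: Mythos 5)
Your proof is correct and follows essentially the same route as the paper: both reduce the lemma to the exact identity $A^{\rm cyc}(w)=A^{\rm cyc}(w')$ in $\mathbb{Z}Q_1$ (so the degree-$1$ hypothesis is indeed not used here, unlike in the later moves) and verify it by inspecting the terms of $A^{\rm cyc}$. Your parity bookkeeping merely repackages the paper's explicit computation, with the small benefit of treating both parities of $|w_1|$ uniformly where the paper computes one case and declares the other similar.
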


\begin{proof}
Assume that $w_1$ has even length, then we may assume that $w_1=\emptyset$ since $A^{\rm cyc}$ is well defined on elements of $G^{\rm cyc}$.
Let $\ell=2,3$ such that $w_2=w_2'i_\ell$. Then the curves $\widetilde{\gamma}$ (in red) and $\widetilde{\gamma'}$ (in blue) differ locally as in the following picture.
\[\scalebox{0.8}{\begin{tikzpicture}[scale=0.6,>=stealth]
\node at (-4,2) {(M33)};
\draw (0,0)--(2,0)--(2,4)--(4,4)--(4,2)--(0,2)--(0,0)--(4,4);
\draw (2,0)--(4,2);\draw (0,2)--(2,4);
\draw [thick, red,->](0.9,-0.5)..controls (0.9,2) and (2.9,3.5).. (2.9,4.5);
\draw [thick, blue,->](1.1,-0.5)..controls (1.1,0) and (3.1,2).. (3.1,4.5);
\end{tikzpicture}}\] 
Then we have the following equalities
\begin{align*}
A^{\rm cyc}(i_1 i_2 i_1 i_3 i_1 w_2)& = A(i_1i_2i_1i_3)+ \alpha'_{31}+A(w_2)+\alpha'_{\ell 1}\\
 &=  \alpha_{12} -\alpha'_{12} -\alpha_{31} +\alpha'_{31} +A(w_2) +\alpha'_{\ell 1}\\ 
  &=  A(i_1i_3i_1i_2)-\alpha'_{12}+A(w_2)+\alpha'_{\ell 1}\\
  &= A^{\rm cyc}(i_1i_3i_1i_2i_1w_2).
\end{align*}
The proof is similar for the case where $w_1$ has odd length and $w_2$ even length.
\end{proof}

\begin{lemma}\label{move42} Let $\gamma\in \pi_1(\Sigma,x_0)$ corresponding to a reduced word in $G^{\rm cyc}$ of the shape $w=w_1i_1i_2i_1i_3i_2i_3w_2$. Let $\gamma'$ be a loop corresponding to the word $w'=w_1i_1i_3i_1i_3w_2$, then we have $d(\gamma)=d(\gamma')$.\end{lemma}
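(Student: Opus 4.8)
The plan is to follow the template of the proof of Lemma~\ref{move33}: I would express $\overline{\gamma}^\Delta$ and $\overline{\gamma'}^\Delta$ via the map $A^{\rm cyc}$, localise the discrepancy to the middle blocks $i_1i_2i_1i_3i_2i_3$ and $i_1i_3i_1i_3$, and compare the two contributions after applying $d$. First I would use the cyclic invariance of $A^{\rm cyc}$ on $G^{\rm cyc}$ to normalise the word, assuming $w_1=\emptyset$ when $w_1$ (hence also $w_2$, since the total length is even) has even length, and running the case where both have odd length in parallel, just as the two parities are treated at the end of Lemma~\ref{move33}.

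The key observation is that $w$ and $w'$ share the prefix $w_1i_1$ and the suffix $i_3w_2$. Since the two middle blocks have lengths $6$ and $4$, they differ by an even amount, so for a fixed parity of $w_1$ the parity of the starting position of $w_2$ is the same for $w$ and $w'$, and the $\alpha$-versus-$\alpha'$ type of every arrow linking the block to $w_1$, to $w_2$ and to the cyclic closure $\alpha'_{\ell_{2p}\ell_1}$ agrees for $w$ and $w'$. Hence all the contributions outside the block, including $A(w_2)$ and the closure arrow, cancel in the difference, and one is reduced to comparing $A(i_1i_2i_1i_3i_2i_3)$ with $A(i_1i_3i_1i_3)$.

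The decisive step, and the point where the argument genuinely departs from Lemma~\ref{move33}, is this comparison. Whereas in Lemma~\ref{move33} the two words produce the same element of $\mathbb Z Q_1$, here I expect
\[ A(i_1i_2i_1i_3i_2i_3)-A(i_1i_3i_1i_3)=(\alpha_{12}+\alpha_{23}+\alpha_{31})-(\alpha'_{12}+\alpha'_{23}+\alpha'_{31}), \]
which is the difference of the two boundaries $\partial_1(\tau)$ and $\partial_1(\tau')$ and is \emph{nonzero} in $\mathbb Z Q_1$. So the equality $d(\gamma)=d(\gamma')$ cannot be a formal word identity: it must use that $d$ is a degree-$1$ map. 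Applying $d$ and invoking $d(\alpha_\tau)+d(\beta_\tau)+d(\gamma_\tau)=1$ for both internal triangles $\tau$ and $\tau'$ gives $d(\partial_1\tau)=d(\partial_1\tau')=1$, so the difference dies and $d(\gamma)=d(\gamma')$. In the odd-length case the same difference appears with the opposite sign, which does not affect the conclusion.

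The main obstacle I anticipate is the junction bookkeeping: one must check carefully, in each parity case, that shifting the start of the block by an even amount leaves the $\alpha/\alpha'$ type of the three connecting arrows untouched, so that everything outside the block really cancels. Once that is verified, recognising the residual difference as $\partial_1(\tau)-\partial_1(\tau')$ and annihilating it with the degree-$1$ relation is immediate.
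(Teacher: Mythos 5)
Your proposal is correct and follows essentially the same route as the paper: normalise $w_1=\emptyset$ by cyclic invariance, observe that the two middle blocks have the same even length parity so all contributions outside the block (including $\alpha'_{3m}$, $A(w_2)$ and the closure arrow) cancel, and compute that the residual difference is exactly $(\alpha_{12}+\alpha_{23}+\alpha_{31})-(\alpha'_{12}+\alpha'_{23}+\alpha'_{31})$, which dies under $d$ because both internal triangles have degree sum $1$. Your identification of this difference as $\partial_1(\tau)-\partial_1(\tau')$ and your remark on the sign flip in the odd-parity case match the paper's argument (which handles that case with ``the proof is similar'').
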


\begin{proof}
Assume that $w_1=\emptyset$ and $w_2=i_mw'_2i_n$ has even length. Then $\widetilde{\gamma}$ and $\widetilde{\gamma'}$ are in the following configuration. 
\[\scalebox{0.8}{
\begin{tikzpicture}[scale=0.6,>=stealth]
\draw (0,0)--(2,0)--(2,4)--(4,4)--(4,2)--(0,2)--(0,0)--(4,4);
\node at (-4,2) {(M42)};
\draw (2,0)--(4,2);\draw (0,2)--(2,4);

\draw [thick, red,->](0.9,-0.5)..controls (0.9,2) and (2,3.1).. (4.5,3.1);
\draw [thick, blue,->](1.1,-0.5)..controls (1.1,0) and (4,2.9).. (4.5,2.9);
\end{tikzpicture}}\]
We have the following equalities:
\begin{align*}
A^{\rm cyc}(w) &= \alpha_{12} -\alpha'_{12} -\alpha_{31}-\alpha'_{23}+\alpha_{23} +\alpha'_{3m}+A(w_2)+\alpha'_{n1}\\ & =\alpha_{31}+\alpha'_{31}-\alpha_{31}+\alpha'_{3m}+A(w_2)+\alpha'_{n1}+ \\ & \qquad +(\alpha_{31}+\alpha_{12}+\alpha_{23})-(\alpha'_{31}+\alpha'_{12}+\alpha'_{23})\\ &= A^{\rm cyc}(w')+  (\alpha_{31}+\alpha_{12}+\alpha_{23})-(\alpha'_{31}+\alpha'_{12}+\alpha'_{23}).
\end{align*}

Therefore one gets $d(\gamma)=d(\gamma')+d( (\alpha_{31}+\alpha_{12}+\alpha_{23})-(\alpha'_{31}+\alpha'_{12}+\alpha'_{23}))=d(\gamma')$ since $d( \alpha_{31}+\alpha_{12}+\alpha_{23})=1=d(\alpha'_{31}+\alpha'_{12}+\alpha'_{23})$.

The proof is similar if $w_1$ and $w_2$ have both odd length.

\end{proof}

\begin{lemma}\label{move51} Let $\gamma\in \pi_1(\Sigma,x_0)$ corresponding to a reduced word in $G^{\rm cyc}$ of the shape $w=w_1i_1i_2i_1i_3i_2i_1i_2w_2$. Let $\gamma'$ be a loop corresponding to the word $w'=w_1i_1i_3i_2w_2$, then we have $d(\gamma)=d(\gamma')$.\end{lemma}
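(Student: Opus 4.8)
The plan is to reduce everything to a computation of $A^{\rm cyc}$ in $\mathbb{Z}Q_1$. Since $d(\gamma)=d(\overline{\gamma}^\Delta)=d(A^{\rm cyc}(w))$, and likewise for $\gamma'$, it suffices to compare $A^{\rm cyc}(w)$ and $A^{\rm cyc}(w')$. First I would draw the local picture (M51) in the square model of Figure \ref{figure torus}, exhibiting representatives $\widetilde{\gamma}$ (in red) and $\widetilde{\gamma'}$ (in blue) that agree outside a neighbourhood of the modified region, exactly as in the pictures accompanying Lemmas \ref{move33} and \ref{move42}; this justifies that the two reduced cyclic words differ only in the displayed subword. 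As in the proof of Lemma \ref{move33}, I would then reduce to the case $w_1=\emptyset$ using that $A^{\rm cyc}$ is invariant under cyclic rotation by an even amount. Since the replaced subword $i_1i_2i_1i_3i_2i_1i_2$ has odd length $7$ and $w$ has even length, this forces $w_2$ to have odd length; the remaining parity case ($w_1$ odd, $w_2$ even, where one cannot simply rotate $w_1$ away) is handled by the same bookkeeping, with the roles of $\alpha$ and $\alpha'$ globally exchanged.

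The heart of the argument is an observation special to this move: the replaced subword has length $7$ while its replacement $i_1i_3i_2$ has length $3$, so the deleted portion has \emph{even} length $4$. Consequently the alternation between $\alpha$-terms (on pairs starting at an odd position) and $\alpha'$-terms (on pairs starting at an even position) is unchanged on the common tail $w_2$ and on the cyclic closure term $\alpha'_{n1}$, and the single term joining the modified head to $w_2$ is the same $\alpha_{2m}$ for both words. Hence the only possible discrepancy comes from the internal pairs of the head. Writing these out with the convention $\alpha_{k\ell}=-\alpha_{\ell k}$, the six internal pairs of $i_1i_2i_1i_3i_2i_1i_2$ sum to $\alpha_{12}+\alpha'_{21}+\alpha_{13}+\alpha'_{32}+\alpha_{21}+\alpha'_{12}=\alpha_{13}+\alpha'_{32}$, the $\alpha_{12}$- and $\alpha'_{12}$-contributions cancelling in pairs; and this is exactly the sum $\alpha_{13}+\alpha'_{32}$ of the two internal pairs of $i_1i_3i_2$.

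Therefore $A^{\rm cyc}(w)=A^{\rm cyc}(w')$ already as elements of $\mathbb{Z}Q_1$, and applying $d$ yields $d(\gamma)=d(\gamma')$. Note that, in contrast with Lemma \ref{move42}, the degree-$1$ condition is not needed here: the equality is exact, not merely an equality after subtracting the triangle relations $\alpha_\tau+\beta_\tau+\gamma_\tau$ (on which $d$ takes value $1$). I expect the only real obstacle to be the error-prone parity bookkeeping — correctly labelling each consecutive pair as $\alpha$ or $\alpha'$ according to the parity of its position, and verifying that the even length drop $7-3=4$ indeed leaves the tail and the closure term untouched in both parity cases for $w_1$ — rather than any conceptual difficulty.
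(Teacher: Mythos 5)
Your proposal is correct and follows essentially the same route as the paper: the paper's proof is precisely the direct verification that $A^{\rm cyc}(w)-A^{\rm cyc}(w')=0$ already in $\mathbb Z Q_1$ (so, as you note, the degree-$1$ relation is not needed here, unlike in Lemma \ref{move42}), with the even-$w_1$ case reduced to $w_1=\emptyset$ by cyclic invariance and the odd case handled symmetrically. Your parity bookkeeping for the tail and the closure term, and your cancellation of the internal pairs down to $\alpha_{13}+\alpha'_{32}$, agree exactly with the paper's displayed computation.
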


\begin{proof}
If $w_1$ has even length, the curves $\widetilde{\gamma}$ and $\widetilde{\gamma'}$ are  in the following configuration.
\[\scalebox{0.8}{
\begin{tikzpicture}[scale=0.6,>=stealth]
\draw (0,0)--(2,0)--(2,4)--(4,4)--(4,2)--(0,2)--(0,0)--(4,4);
\node at (-4,2) {(M51)};
\draw (2,0)--(4,2);\draw (0,2)--(2,4);

\draw [thick, red,->](0.9,-0.5)..controls (0.9,4) and (3.1,4).. (3.1,-0.5);
\draw [thick, blue,->](1.1,-0.5)..controls (1.1,1) and (2.9,1).. (2.9,-0.5);
\end{tikzpicture}}\]

It is enough to check that
\begin{align*}
A^{\rm cyc}(w)-A^{\rm cyc}(w') &= (\alpha_{12}-\alpha'_{12}-\alpha_{31}-\alpha'_{23}-\alpha_{12}+\alpha'_{12})-(-\alpha_{31}-\alpha'_{23})=0
\end{align*}
\end{proof}

\begin{lemma}\label{move60} Let $\gamma\in \pi_1(\Sigma,x_0)$ be a simple curve corresponding to a reduced word in $G^{\rm cyc}$ of containing the subword $i_1i_2i_1i_3i_2i_1i_3i_1$. Let $v$ be the word of maximal length such that $w$ has the following form $w=w_1vi_2i_1i_3i_2i_1i_3v^{-1}w_2$. Let $\gamma'$ be the curve corresponding to the word $w'=w_1w_2$, then we have $d(\gamma)=d(\gamma')$.\end{lemma}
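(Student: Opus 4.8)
The plan is to follow the same template as Lemmas~\ref{move33}, \ref{move42} and \ref{move51}: reduce to the case $w_1=\emptyset$ using that $A^{\rm cyc}$ is well defined on $G^{\rm cyc}$ (the parities of $w_1$ and of $v$ being handled by the same routine case distinctions as in the earlier lemmas), then compute $A^{\rm cyc}(w)-A^{\rm cyc}(w')$ explicitly and recognise it, as in the conclusion of Lemma~\ref{move42}, as the combination $\partial_1(\tau')-\partial_1(\tau)$ of the two triangle relations $\partial_1(\tau)=\alpha_{12}+\alpha_{23}+\alpha_{31}$ and $\partial_1(\tau')=\alpha'_{12}+\alpha'_{23}+\alpha'_{31}$, on each of which $d$ takes the value $1$. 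The new feature compared with the three preceding moves is that the hypothesis that $\gamma$ is simple is genuinely used, and pinpointing where is the main issue.

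First I would record two elementary facts about $A$. For words of even length the concatenation rule $A(uu')=A(u)+A(u')+\alpha'_{\ell,\ell'}$ holds, where $\ell$ (resp. $\ell'$) is the index of the last letter of $u$ (resp. of the first letter of $u'$); and a reversal computation gives $A(v^{-1})=-A(v)$ when $v$ has even length. Applying these to $vXv^{-1}$, with $X=i_2i_1i_3i_2i_1i_3$, the contributions of $v$ and $v^{-1}$ cancel and one is left with
\[A(vXv^{-1})=A(X)+\alpha'_{q,2}+\alpha'_{3,q},\]
where $q$ is the index of the last letter of $v$. Since the prescribed subword is $i_1Xi_1$, the word $v$ necessarily ends in $i_1$, so $q=1$; together with the direct evaluation $A(X)=-(\alpha_{12}+\alpha_{23}+\alpha_{31})-\alpha'_{31}-\alpha'_{12}$ this collapses the primed correction terms, and after including the remaining junction and cyclic-closure terms one obtains
\[A^{\rm cyc}(w)-A^{\rm cyc}(w')=-(\alpha_{12}+\alpha_{23}+\alpha_{31})+\alpha'_{pm}+\alpha'_{mn}+\alpha'_{np},\]
where $p$ is the index of the first letter of $v$ and $m,n$ are the indices of the first and last letters of $w_2$.

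It then remains to understand $\alpha'_{pm}+\alpha'_{mn}+\alpha'_{np}$. I would first check that $p,m,n$ are pairwise distinct: $p\neq m$ and $p\neq n$ because $w$ is reduced as a cyclic word, and $m\neq n$ because $v$ has maximal length (if one had $m=n$ one could absorb one more letter at each end into $v$). Hence $\{p,m,n\}=\{1,2,3\}$ and $\alpha'_{pm}+\alpha'_{mn}+\alpha'_{np}=\pm\partial_1(\tau')$, the sign being $+$ exactly when $(p,m,n)$ is a positive cyclic permutation of $(1,2,3)$. This is precisely where simplicity enters: drawing $\widetilde{\gamma}$ around the lattice point (a preimage of the puncture) at the base of the bump $vXv^{-1}$, the incoming strand crosses arc $n$, the bump departs across arc $p$, and the outgoing strand crosses arc $m$; for $\widetilde{\gamma}$ to be embedded these three arcs must be met in the cyclic order induced by the orientation of $\Sigma$, which forces $(p,m,n)$ to be positive. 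With the sign settled one gets $A^{\rm cyc}(w)-A^{\rm cyc}(w')=\partial_1(\tau')-\partial_1(\tau)$, whence $d(\gamma)-d(\gamma')=d(\partial_1(\tau'))-d(\partial_1(\tau))=1-1=0$.

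The routine algebra (the concatenation bookkeeping and the parity cases for $v$ and $w_1$) I expect to go through just as in the earlier lemmas. The one genuinely delicate step, and the main obstacle, is the orientation claim of the previous paragraph: translating ``the lift of a simple curve is embedded'' into the combinatorial statement that $(p,m,n)$ is a positive cyclic triple. This is the only place where the hypothesis that $\gamma$ is simple is used, and it is what distinguishes Lemma~\ref{move60} from the three preceding moves; making it rigorous will require the explicit local picture of the bump around the puncture rather than pure word manipulation.
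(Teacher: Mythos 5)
Your overall strategy coincides with the paper's: both proofs evaluate $A^{\rm cyc}(w)-A^{\rm cyc}(w')$ explicitly and recognise it as an integer combination of the two triangle relations $\alpha_{12}+\alpha_{23}+\alpha_{31}$ and $\alpha'_{12}+\alpha'_{23}+\alpha'_{31}$ with coefficients summing to zero, so that $d$ kills it. The difference is organisational. The paper runs a case analysis indexed by the first letter of $v$ and the parity of $|v|$, draws the local picture of $\widetilde{\gamma}$ near the base of the finger in each case to read off the letters of $w_1$ and $w_2$ adjacent to $v$ and $v^{-1}$, and computes each case separately. You instead package the (even-$|v|$, $w_1=\emptyset$) computation into the single identity $A^{\rm cyc}(w)-A^{\rm cyc}(w')=-(\alpha_{12}+\alpha_{23}+\alpha_{31})+\alpha'_{pm}+\alpha'_{mn}+\alpha'_{np}$; I checked this against a direct computation and it is correct, as are your observations that $p\neq m$ and $p\neq n$ follow from (cyclic) reducedness and that $m\neq n$ follows from maximality of $v$ (absorbing $i_m$ into $v$ as $\tilde v=i_mv$). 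This bookkeeping is cleaner than the paper's and has the merit of isolating exactly where the simplicity hypothesis is used, which the paper leaves implicit in its pictures.

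That said, the step you defer is the crux and cannot be waved at: if $(p,m,n)$ were a negative cyclic triple your formula would give $-(\alpha_{12}+\alpha_{23}+\alpha_{31})-(\alpha'_{12}+\alpha'_{23}+\alpha'_{31})$, on which $d$ takes the value $-2$, so the lemma would fail. Your heuristic --- the finger wraps its lattice point with the fixed chirality dictated by $X=i_2i_1i_3i_2i_1i_3$, and embeddedness of $\widetilde{\gamma}$ forces the three strands at the base to meet the arcs $n,p,m$ in the cyclic order induced by the orientation --- is exactly the content of the paper's six local configuration pictures, certified there case by case; to complete your argument you must actually draw and justify that local picture (once, in your uniform setting, or by falling back on the paper's case list). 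The remaining deferrals (odd $|v|$, and an odd-length $w_1$ that cannot be rotated away inside $G^{\rm cyc}$) are genuinely routine: the pairing shifts by one, the roles of the primed and unprimed triangles interchange in the formula, and the same positivity argument applies, matching the paper's odd-length configurations.
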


\begin{proof}
Note that since $w$ is reduced, then $v$ is not empty and its last letter has to be $i_1$.
If $w_1v$ has even length, the curve $\widetilde{\gamma}$ looks as follows.

\[\scalebox{0.8}{
\begin{tikzpicture}[scale=0.6,>=stealth]
\draw (0,0)--(2,0)--(2,4)--(4,4)--(4,2)--(0,2)--(0,0)--(4,4);
\node at (-4,2) {(M60)};
\draw (2,0)--(4,2);\draw (0,2)--(2,4);
\draw [thick, red,->](0.9,-0.5)..controls (0.9,4) and (2.5,3.5).. (3,3)..controls (4,2) and (3,1).. (2,1).. controls (1,1) and (1.1,0.5).. (1.1,-0.5) ;
\end{tikzpicture}}\]

Since $\widetilde{\gamma}$ is a simple curve, then we are in one of the following configuration depending on the first letter of $v$ and on the parity of its length. 

\[\scalebox{0.8}{
\begin{tikzpicture}[scale=0.6,>=stealth]
\draw (0,0)--(0,2)--(2,2)--(0,0);
\draw [thick, red,->] (-0.5,1.2)..controls (1,1.2) and (0.9,2)..(0.9,2.5);
\draw [thick, red,->] (1.1,2.5)..controls (1.1,2) and (1,1.2)..(2,1.2);
\draw[thick, blue, ->] (-0.5,0.9)--(2,0.9);
\draw (4,0)--(4,2)--(6,2)--(4,0);
\draw [thick, blue,<-] (3.5,1.2)..controls (5,1.2) and (4.9,2)..(4.9,2.5);
\draw [thick, red,->] (5.1,2.5)..controls (5.1,2) and (5,1.2)..(6,1.2);
\draw[thick, red, <-] (3.5,0.9)--(6,0.9);

\draw (8,0)--(8,2)--(10,2)--(8,0);
\draw [thick, red,->] (7.5,1.2)..controls (9,1.2) and (8.9,2)..(8.9,2.5);
\draw [thick, blue,<-] (9.1,2.5)..controls (9.1,2) and (9,1.2)..(10,1.2);
\draw[thick, red,<-] (7.5,0.9)--(10,0.9);

\draw (12,0)--(14,0)--(14,2)--(12,0);
\draw[thick,red,->] (12,1.2)--(14.5,1.2);
\draw[thick,red,<-] (12,0.9)..controls (13,0.9) and (13,0.5).. (13,-0.5);
\draw[thick,blue,->] (13.2,-0.5).. controls (13.2,0.5) and (13.5,0.9)..(14.5,0.9);

\draw (16,0)--(18,0)--(18,2)--(16,0);
\draw[thick,red,->] (16,1.2)--(18.5,1.2);
\draw[thick,blue,->] (16,0.9)..controls (17,0.9) and (17,0.5).. (17,-0.5);
\draw[thick,red,<-] (17.2,-0.5).. controls (17.2,0.5) and (17.5,0.9)..(18.5,0.9);

\draw (20,0)--(22,0)--(22,2)--(20,0);
\draw[thick,blue,<-] (20,1.2)--(22.5,1.2);
\draw[thick,red,<-] (20,0.9)..controls (21,0.9) and (21,0.5).. (21,-0.5);
\draw[thick,red,<-] (21.2,-0.5).. controls (21.2,0.5) and (21.5,0.9)..(22.5,0.9);
\end{tikzpicture}}\]

In the first configuration where $v$ has even length, $i_1$ as first letter and $i_k$ as last letter, we get  

\begin{align*}A^{\rm cyc}(w)-A^{\rm cyc}(w')= \alpha'_{31} +A(v)+\alpha'_{k1}+\alpha_{12}-\alpha'_{12}-\alpha_{31}-\alpha'_{23}-\alpha_{12}-\alpha'_{31}+\\ \qquad +\alpha_{31}
+\alpha'_{1k}+A(v^{-1})+\alpha'_{12}-(-\alpha'_{23})=0 \end{align*}

In the fourth configuration, $v$ has odd length, $i_2$ as first letter. Then $i_1$ is the last letter of $w_1$ and $i_3$ the first letter of $w_2$. Denote by $i_k$ the last letter of $v$.

\begin{align*} A^{\rm cyc}(w)-A^{\rm cyc}(w') & =  A(i_1v)+\alpha'_{k1}+ \alpha_{12}-\alpha'_{12}-\alpha_{31}-\alpha'_{23}-\alpha_{12}-\alpha'_{31}+\\  & \qquad +\alpha_{31}
+\alpha'_{1k}+A(v^{-1}i_3)-(- \alpha_{31}) \\  &= A(i_1v) -(\alpha'_{12}+\alpha'_{23}+\alpha'_{31})+A(v^{-1}i_3) +\alpha_{31} \\ & = \alpha_{12} +\alpha_{23} +\alpha_{31}  -(\alpha'_{12}+\alpha'_{23}+\alpha'_{31}). \end{align*}

Hence $d(\gamma)=d(\gamma')$.

The other cases can be checked similarly.

\end{proof}

\begin{proof}[Proof of Proposition \ref{propgenus1}]

Let $\gamma'$ be a simple closed curve on $\Sigma$ which is not contractible and with $[\gamma]=[\gamma']$. 
By the local moves $(M33)$, $(M42)$, $(M51)$ and $(M60)$ of lemmas \ref{move33}, \ref{move42}, \ref{move51} and \ref{move60} we can decrease the number of punctures which are in between the paths $\widetilde{\gamma}$ and $\widetilde{\gamma'}$, so that all intermediate steps are simple curves in $\mathbb{R}^2\backslash \mathbb{Z}^2$ having the same degree. Therefore $d(\gamma)$ is constant on the homology class of $\gamma$.

\medskip

Now assume that $[\gamma]\neq 0$, and denote by $a$ and $b$ the generators associated with the triangulation $\Delta$ (see Figure \ref{figure torus}). 

Let $\gamma'$ be the element $\gamma'=a^\lambda b^\mu$ (which corresponds to the reduced word $(i_2i_3)^\lambda(i_1i_2)^\mu=(i_2i_3)^{\lambda-1}(i_1i_2)^{\mu-1}i_1i_3$ in $G^{\rm cyc}$)  where $[\gamma]=\lambda[a]+\mu[b]$. 

\[\scalebox{0.7}{
\begin{tikzpicture}[scale=1,>=stealth]

\draw[style=help lines, step=1] (0,0) grid (6,5);
\draw[style=help lines] (0,0)--(5,5);
\draw[style=help lines] (0,1)--(4,5);
\draw[style=help lines] (0,2)--(3,5);
\draw[style=help lines] (0,3)--(2,5);
\draw[style=help lines] (0,4)--(1,5);\draw[style=help lines] (1,0)--(6,5); \draw[style=help lines] (2,0)--(6,4); \draw[style=help lines] (3,0)--(6,3);
\draw[style=help lines] (4,0)--(6,2);\draw[style=help lines] (5,0)--(6,1);

\draw[thick, blue] (0.25,0.5)..controls (6,0.5) and (5.5,0).. (5.5,3);

\draw[thick, blue] (5.5,3)..controls (5.5,4) and (5.5,4.5).. (6.25,4.5);
\node at (0.25,0.5) {$\bullet$};
\node at (6.25,4.5) {$\bullet$};

\end{tikzpicture}}\]

Then an easy calcultation shows that \begin{align*}\overline{\gamma'}^\Delta& =(\lambda-2)(\alpha_{23}-\alpha'_{23}) +\alpha_{23}-\alpha'_{31}+(\mu-1)(\alpha_{12}-\alpha'_{12})-\alpha_{31}-\alpha'_{23}\\ & =\lambda (\alpha_{23}-\alpha'_{23})+\mu (\alpha_{12}-\alpha_{12}')+(\alpha_{12}'+\alpha'_{23}+\alpha'_{31})-(\alpha_{12}+\alpha_{23}+\alpha_{31})\\ & = \lambda \overline{a}^\Delta +\mu \overline{b}^\Delta +(\alpha_{12}'+\alpha'_{23}+\alpha'_{31})-(\alpha_{12}+\alpha_{23}+\alpha_{31}).\end{align*}
Since $d$ is a degree-$1$ map we get $d(\gamma')=\lambda d(a)+\mu d(b)$. Hence we get $d(\gamma)=\lambda d(a)+\mu d(b)$ by the first part of the proposition. So by putting $d([0]):=0$, $d$ induces a $\mathbb Z$-linear map $H_1(\Sigma,\mathbb Z)\to \mathbb Z$ which coincide with $d$ on the simple non contractible curves. The linearity then of course holds for any other basis of $H_1(\Sigma,\mathbb Z)$.

\end{proof}

\begin{remark}\label{remark} 
Proposition~\ref{propgenus1} is still true for any non contractible curve $\gamma$ in $\pi_1^{\rm free}(\Sigma)$ such that the corresponding curve $\widetilde{\gamma}$ in $\mathbb{R}^2\backslash\mathbb Z^2$ is simple.  Even if $\gamma$  and $\gamma'$ are simple as curves on $\Sigma$, when passing from $\gamma$ to $\gamma'$ by local moves, it is not clear that the intermediate curves are simple as curves on $\Sigma$, but they are simple as curves on  $\mathbb{R}^2\backslash\mathbb Z^2$. 

In the proof we use the fact that  $\widetilde{\gamma}$ is not trivial, which means that $\gamma$ is not homotopic to zero. Indeed the proposition is not true if $\gamma$ is contractible. For instance if $c$ is the curve surrounding the puncture, then its homology class vanishes, but we have $d(c)=2\neq 0=d(0)$. However for any non contractible closed curve $\gamma$ with $[\gamma]=0$ and which corresponds to a curve $\widetilde{\gamma}$ in $\mathbb R^2\backslash\mathbb Z^2$ without selfintersection, one can pass from $\widetilde{\gamma}$ to $\widetilde{c}$ by local moves $(M33)$, $(M42)$, $(M51)$ and $(M60)$, hence  we get $d(\gamma)=2$.

\end{remark}

\subsection{The grading factors through homology: Case of the torus with one boundary component}\label{subsection 3.4}

In this section $(S,M)$ is a surface of genus $1$ with one boundary component and with marked points on the boundary. 

\begin{proposition}\label{propgenus1.2} Let $(\Delta,d)$ be a graded triangulation on $(S,M)$. Let $\gamma$ be a simple closed curve on $S$ such that its class $[\gamma]$ in $H_1(S,\mathbb Z)$ is  not zero. Let $a,b$ be simple closed curves that generate $H_1(S,\mathbb Z)$, and assume $[\gamma]=\lambda [a]+\mu [b]$. Then we have $$d(\gamma)=\lambda d(a)+\mu d(b).$$
\end{proposition}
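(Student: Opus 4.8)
The plan is to reduce the statement to the once punctured torus, which was settled in Proposition~\ref{propgenus1}, using the inclusion $i\colon S\to\Sigma:=\Sigma_{1,1}$ obtained by gluing a punctured disc to $\partial S$. Since $i$ is an embedding inducing the isomorphism $i_*\colon H_1(S,\mathbb Z)\xrightarrow{\sim}H_1(\Sigma,\mathbb Z)$, every simple closed curve $\gamma$ on $S$ maps to a simple closed curve $i(\gamma)$ on $\Sigma$ with $[i(\gamma)]=i_*[\gamma]$, so $[i(\gamma)]=\lambda\, i_*[a]+\mu\, i_*[b]$. The whole point will therefore be to produce a graded triangulation $(\Delta_\Sigma,d_\Sigma)$ of $\Sigma$ for which $d(\gamma)=d_\Sigma(i(\gamma))$ holds for the three curves $\gamma,a,b$; granting this, Proposition~\ref{propgenus1} gives $d_\Sigma(i(\gamma))=\lambda\, d_\Sigma(i(a))+\mu\, d_\Sigma(i(b))$, and transporting back through $i$ yields exactly $d(\gamma)=\lambda d(a)+\mu d(b)$.

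First I would reduce to a convenient triangulation. The quantities $d(\gamma),d(a),d(b)$ are values of $d$ on closed curves, and by the argument in the proof of Corollary~\ref{corollary d-d' homology} (namely \cite[Lemma 2.14]{AG}) the restriction of $d$ to $\pi_1^{\rm free}(S)$ is unchanged when $(\Delta,d)$ is replaced by a left graded flip $\mu_i^L(\Delta,d)$. As the flip graph of $(S,M)$ is connected, I may assume that $\Delta$ is the standard triangulation obtained from a triangulation $\Delta_\Sigma$ of $\Sigma$ by blowing up the puncture into the boundary circle $\partial S$ carrying its marked points and filling the resulting annular collar by a fan of boundary triangles. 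In such a $\Delta$ the only internal triangles are the two triangles of $\Delta_\Sigma$, glued along the three core arcs; I then define $d_\Sigma$ on $\Delta_\Sigma$ to be the restriction of $d$ to the six arrows of these two triangles. The degree-$1$ condition for $d_\Sigma$ on each triangle of $\Delta_\Sigma$ is inherited directly from the degree-$1$ condition for $d$ on the corresponding internal triangle of $\Delta$.

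It remains to match the two degree computations. Because $[\partial S]=0$ in $H_1(S,\mathbb Z)$ while $[\gamma]\neq 0$, the curve $\gamma$ is not isotopic to the boundary, so I can isotope it off the collar neighbourhood of $\partial S$; the same applies to $a$ and $b$. Once pushed into the core, such a curve meets only the three core arcs, crossing exactly the angles lying in the two internal triangles. Hence its word $\overline{\gamma}^{\Delta}$ involves only the arrows of these two triangles and coincides, arrow by arrow, with the word $\overline{i(\gamma)}^{\,\Delta_\Sigma}$ computed in $\Sigma$ (the arcs of $\Delta_\Sigma$ differ from the core arcs of $\Delta$ only in the puncture region, which the isotoped curve avoids). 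Applying $d$, respectively $d_\Sigma$, and using that the two maps agree on those arrows, gives $d(\gamma)=d_\Sigma(i(\gamma))$, and likewise for $a$ and $b$. Combining this with Proposition~\ref{propgenus1} in $\Sigma$ and the compatibility $[i(\gamma)]=\lambda[i(a)]+\mu[i(b)]$ finishes the proof.

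The main obstacle I anticipate lies in the bookkeeping of the second and third steps: making the blow-up/collar correspondence precise enough that the two internal triangles of $\Delta$ are canonically identified with the triangles of $\Delta_\Sigma$ with matching arrows, and checking that a homologically nontrivial simple closed curve can be isotoped into the core so that its recorded sequence of crossings together with the intervening angles is literally the sequence computing $\overline{i(\gamma)}^{\,\Delta_\Sigma}$. Everything beyond that is either a direct invocation of Proposition~\ref{propgenus1} or the $\mathbb Z$-linearity it already supplies, which then extends the identity from the triangulation-adapted generators to the arbitrary basis $(a,b)$.
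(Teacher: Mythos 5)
Your overall strategy --- reduce to the once punctured torus and invoke Proposition~\ref{propgenus1} --- is the right instinct, and your Step~2 identification of the two uncontractible triangles of $\Delta$ with the two triangles of a triangulation of $\Sigma_{1,1}$ is essentially what the paper does. But there is a genuine gap in the reduction you use to get there: the ``standard triangulation'' on which your argument rests cannot exist. If the two uncontractible triangles of $\Delta$ had all three sides in common (your three ``core arcs''), their union would be a \emph{closed} torus, leaving no room for $\partial S$. In any actual triangulation of the torus with one boundary component, the two uncontractible triangles share at most two arcs, and an essential simple closed curve crossing in the remaining direction is forced to traverse triangles \emph{based on the boundary} and to pick up their arrows in $\overline{\gamma}^{\Delta}$. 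This is visible in the paper's own example with $p=2$: there $\overline{a}^{\Delta}=\alpha_{23}-\alpha_{43}-\alpha_{42}$, and the term $\alpha_{43}$ lives in an internal triangle based on the boundary, where a degree-$1$ map may well be nonzero. So the contribution you discard as avoidable is in general present and nonzero, and no sequence of flips removes it (flips preserve $d$ on loops, as you correctly note, but they cannot produce the impossible configuration).

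What is missing is precisely the paper's Step~1: decompose $\overline{\gamma}^{\Delta}=\overline{\gamma}^{\Delta}_{\rm bb}+\overline{\gamma}^{\Delta}_{\rm uc}$ according to the type of triangle, observe that in a triangle based on the boundary the unique relevant arrow $\alpha\colon i\to j$ occurs with coefficient $n_\alpha={\rm Int}(i,\gamma)={\rm Int}(j,\gamma)$, and use Lemma~\ref{lemma intersection} to conclude that $\overline{\gamma}^{\Delta}_{\rm bb}$ depends only on $[\gamma]$ and is linear in it. Only the remaining piece $\overline{\gamma}^{\Delta}_{\rm uc}$ transfers to $\Sigma_{1,1}$ as in your Step~2. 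Without an argument controlling the based-on-boundary contribution, the equality $d(\gamma)=d_\Sigma(i(\gamma))$ you assert for $\gamma$, $a$, $b$ is false in general, and the proof does not go through.
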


\begin{proof} 
Without loss of generality, we assume that $\gamma$ intersects transversally the arcs of $\Delta$ and with a minimal number of intersections.

We consider the following three kind of triangles (not necessarily internal) in $\Delta$: 
\begin{itemize}
\item the triangles \emph{homotopic to the boundary}; that are the triangles where the three sides are homotopic (or equal) to a boundary segment; 
\[\scalebox{1}{
\begin{tikzpicture}[>=stealth,scale=1]

\draw[white,fill=gray!30] (0,0).. controls (0.5,0.5) and (1.5,1).. (2,1)..controls (2.5,1) and (3.5,0.5).. (4,0)--(0,0);

\draw (0,0).. controls (0.5,0.5) and (1.5,1).. (2,1);

\draw[fill=blue!30] (0,0).. controls (0.5,0.8) and (1.5, 1.25)..(2,1).. controls (2.5,1) and (3.5,0.5).. (4,0).. controls (3.5,2) and (0.5,2)..(0,0);

\node  at (0,0) {$\bullet$};
\node  at (2,1) {$\bullet$};
\node  at (4,0) {$\bullet$}; 
\node  at (1,0.7) {$\bullet$};

\node at (2,0.5) {B};

\node at (2,1.25) {$\tau$};

\end{tikzpicture}}\]

\item the triangles \emph{based on the boundary}; that are the triangles where exactly one side is homotopic (or equal) to a boundary segment;

\[\scalebox{1}{
\begin{tikzpicture}[>=stealth,scale=1]

\draw[fill=gray!30] (0,0).. controls (0.5,0.5) and (1.5,1).. (2,1)..controls (2.5,1) and (3.5,0.5).. (4,0)--(0,0);

\draw[white] (0,0)--(4,0);

\draw (0,0).. controls (0.5,0.5) and (1.5,1).. (2,1);

\draw[fill=blue!30] (0,0).. controls (0.5,0.8) and (1.5, 1.25)..(2,1)--(2,3)--(0,0);

\node  at (0,0) {$\bullet$};
\node  at (2,1) {$\bullet$};
\node  at (4,0) {$\bullet$}; 
\node  at (1,0.7) {$\bullet$};
\node at (2,3) {$\bullet$};
\node at (2,0.5) {B};
\node at (1.5,1.5) {$\tau$};

\end{tikzpicture}}\]

 \item and the triangles which are \emph{uncontractible}, that are the triangles where the three side are not homotopic to a boundary segment.  
 \[\scalebox{1}{
\begin{tikzpicture}[>=stealth,scale=1]

\draw[fill=gray!30] (0,0) circle (0.5);
\draw[fill=gray!30](4,0) circle (0.5);
\draw[fill=gray!30] (2,3) circle (0.5);

\draw[fill=blue!30] (0.5,0)--(3.5,0)--(2,2.5)--(0.5,0);

\node at (0.5,0) {$\bullet$};\node at (3.5,0) {$\bullet$};\node at (2,2.5) {$\bullet$};
\node at (0,0) {B}; \node at (4,0) {B}; \node at (2,3) {B}; 
\node at (2,1) {$\tau$};

\end{tikzpicture}}\]

 \end{itemize}
 The arrows coming from triangles homotopic to the boundary do not appear in $\overline{\gamma}^\Delta$. So we are just considering triangles of the other types. Then we can decompose \[\overline{\gamma}^\Delta=\sum_{\alpha\in Q_1}n_\alpha \alpha=\sum_{\underset{\tau\textrm{ based on the bdy}}{\alpha\in \tau }} n_\alpha \alpha +\sum_{\underset{ \tau\textrm{ uncontractible}}{\alpha\in \tau}}n_\alpha \alpha=\overline{\gamma}^\Delta_{\rm bb}+\overline{\gamma}^\Delta_{\rm uc}.\] 

\medskip

\textit{Step 1: We prove that $\overline{\gamma}^\Delta_{bb}$ depends only on the homology class of $\gamma$, and that $\overline{\gamma}^\Delta_{\rm bb}=\lambda \overline{a}^\Delta_{\rm bb}+\mu \overline{b}_{\rm bb}^\Delta$.}

Let $\tau$ be a triangle of $\Delta$ based on the boundary. Denote by $i$ and $j$ its two sides that are not homotopic to a boundary segment, and by $\alpha$ the corresponding arrow $i\to j$ in $Q^\Delta$. Since $\tau$ is based on the boundary, the arrow $\alpha$ is the only arrow of the triangle $\tau$ that may occur in $\overline{\gamma}^\Delta$. Moreover, for a well chosen orientation of the arcs $i$ and $j$, one easily checks that $n_\alpha={\rm Int}(i,\gamma)={\rm Int}(j,\gamma)$, where ${\rm Int}(i,\gamma)$ is the algebraic number of intersections between $i$ and $\gamma$ (see subsection \ref{subsection intersection}). Then by lemma \ref{lemma intersection} we immediately get Step 1. 
  
\medskip

\textit{Step 2: We prove that $d(\overline{\gamma}^\Delta_{\rm uc})$ depends only on the homology class of $\gamma$ and that $d(\overline{\gamma}^\Delta_{\rm uc})=\lambda d( \overline{a}^\Delta_{\rm uc})+\mu d(\overline{b}_{\rm uc}^\Delta)$. .}

The natural homeomorphsim $f:S\backslash \partial S \to \Sigma_{1,1}$ induces a bijection $\pi_1^{\rm free}(S)\simeq \pi_1^{\rm free}(\Sigma_{1,1})$ and a group isomorphism $H_1(S,\mathbb Z)\simeq H_1(\Sigma_{1,1},\mathbb Z)$. Hence we have $[f(\gamma)]=\lambda [f(a)]+\mu [f(b)]\neq 0$.

There are only two uncontractible triangles $\tau$ and $\tau'$ in $\Delta$. Moreover $f(\tau)$ and $f(\tau ')$ give a triangulation $\Delta'$ of $\Sigma_{1,1}$. Then $f$ induces a bijection between the arrows of $Q^\Delta$ coming from $\tau$ and $\tau'$ in $\Delta$ with the arrows of $Q^{\Delta'}$. Denote by $d'\in \mathbb Z^{Q^{\Delta'}_1}$ the grading induced by $d$, it is clearly a degree-$1$ map on $Q^{\Delta'}$.  We immediately get that $\overline{\gamma}^\Delta_{\rm uc}= \overline{f(\gamma)}^{\Delta'}$ through the bijection described above. Hence we get

\begin{align*}
d(\overline{\gamma}^\Delta_{\rm uc}) & = d'(\overline{f(\gamma)}^{\Delta'}) = \lambda d'(\overline{f(a)}^{\Delta'})+\mu d'(\overline{f(b)}^{\Delta'}) & \textrm{by Proposition \ref{propgenus1}}\\ & = \lambda d(\overline{a}^\Delta_{\rm uc})+\mu d(\overline{b}^\Delta_{\rm uc}).
\end{align*}

\medskip

Combining Steps 1 and 2, we immediately get the result.

\end{proof}

\subsection{Proof of Theorem \ref{thmgcd}}\label{subsection proof}

We start with the following classical result for which we give a proof for completeness.

\begin{lemma}\label{orbit} Let $(m,n)\in \mathbb Z^2$. Then there exists $M\in {\rm SL}(2,\mathbb Z)$ such that $\begin{pmatrix}m'\\n'\end{pmatrix}=M.\begin{pmatrix}m\\n\end{pmatrix}$ if and only if $\gcd(m,n)=\gcd(m',n')$.
\end{lemma}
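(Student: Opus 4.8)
The plan is to prove Lemma~\ref{orbit} in two directions. The statement concerns the action of $\mathrm{SL}(2,\mathbb Z)$ on $\mathbb Z^2$ by left multiplication, and the claim is that two integer vectors lie in the same orbit precisely when they share the same gcd (with the convention $\gcd(m,0)=m$ handling the zero vector, up to sign).

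First I would prove the easier implication, that if $\binom{m'}{n'}=M\binom{m}{n}$ with $M\in\mathrm{SL}(2,\mathbb Z)$ then $\gcd(m,n)=\gcd(m',n')$. The key observation is that $g:=\gcd(m,n)$ divides every integer linear combination of $m$ and $n$; since the entries of $M$ are integers, $g$ divides both $m'$ and $n'$, so $g\mid \gcd(m',n')$. Applying the same argument to $M^{-1}\in\mathrm{SL}(2,\mathbb Z)$ (which has integer entries, being the adjugate as $\det M=1$) gives the reverse divisibility $\gcd(m',n')\mid \gcd(m,n)$, and hence equality.

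For the converse, the cleanest approach is to reduce every vector to a normal form in its orbit. I would show that any $\binom{m}{n}$ can be carried by some $M\in\mathrm{SL}(2,\mathbb Z)$ to $\binom{g}{0}$ where $g=\gcd(m,n)$; then two vectors with the same gcd both reach the same normal form $\binom{g}{0}$, and composing one transformation with the inverse of the other produces the required $M$ (note $\mathrm{SL}(2,\mathbb Z)$ is a group, so inverses and composites stay in it). To produce this reduction I would use the Euclidean algorithm realized through the generating matrices $\left(\begin{smallmatrix}1&1\\0&1\end{smallmatrix}\right)$ and $\left(\begin{smallmatrix}1&0\\1&1\end{smallmatrix}\right)$ (the images $f(\tau_a),f(\tau_b)$ already appearing in the excerpt), which perform elementary row operations $m\mapsto m\pm n$ and $n\mapsto n\pm m$: repeatedly subtracting the smaller coordinate from the larger terminates, by the usual descent, with one coordinate equal to $\gcd(m,n)$ and the other equal to $0$. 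A sign adjustment by $\left(\begin{smallmatrix}0&-1\\1&0\end{smallmatrix}\right)$ or $\left(\begin{smallmatrix}-1&0\\0&-1\end{smallmatrix}\right)$, both in $\mathrm{SL}(2,\mathbb Z)$, fixes the ordering and the sign so that the surviving entry is exactly $g\ge 0$.

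The main obstacle is bookkeeping rather than conceptual: one must handle the degenerate cases cleanly so the convention $\gcd(0,n)=n$ is respected. If $m=n=0$ the orbit is the single point $\binom00$ and there is nothing to do; if exactly one coordinate vanishes, say $n=0$, then already $\binom{m}{0}$ needs only a sign correction to reach $\binom{|m|}{0}=\binom{g}{0}$. I expect the write-up to be short once the Euclidean descent is phrased as "while both entries are nonzero, replace the larger-in-absolute-value by its difference with the smaller," and to emphasize that every elementary step is an element of $\mathrm{SL}(2,\mathbb Z)$ so the whole reduction is a single matrix in the group.
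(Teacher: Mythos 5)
Your proof is correct and follows essentially the same strategy as the paper: both arguments identify the orbit of $(m,n)$ with that of $\bigl(\gcd(m,n),0\bigr)$. The only difference is in how that identification is realized --- the paper characterizes the orbit of $(t,0)$ in one stroke via B\'ezout's identity (the second column of $M$ supplies the B\'ezout coefficients for $\frac{m}{t}u-\frac{n}{t}v=1$), whereas you reach the normal form constructively by Euclidean descent through the elementary matrices; both are sound, and your explicit treatment of signs and degenerate cases matches the paper's convention $\gcd(0,n)=n$.
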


\begin{proof}
Let $t\in \mathbb N$. Then $(m,n)\in\mathbb Z^2$ is in the orbit of $(t,0)$ if and only if $t$ divides $m$ and $n$, and there exists $u$ and $v$ such that $\frac{m}{t}u-\frac{n}{t}v=1$. By Bezout's identity, this is equivalent to the fact that $t$ divides $\gcd (m,n)$ and that $\gcd (\frac{m}{t},\frac{n}{t})=1$, so $\gcd(m,n)=t$.
\end{proof}

\begin{proof}[Proof of Theorem \ref{thmgcd}]$ $
\medskip

$(1)\Rightarrow (2)$. By Theorem \ref{thmAG}, there exists $\Phi\in\mathcal{M}(S,M)$ such that $d(a)=d'(\Phi (a))$ and $d(b)=d'(\Phi (b))$. Denote by $\epsilon=([a],[b])$ the basis of $H_1(S,\mathbb Z)$, and $M=G^\epsilon(\Phi)=\begin{pmatrix}p & q\\ r & s\end{pmatrix}$ the matrix of ${\rm SL}(2,\mathbb Z)$ corresponding to $\Phi$. Then by Proposition \ref{MCG} we have \[[\Phi(a)]^\epsilon=M.[a]^\epsilon=\begin{pmatrix}p\\ r\end{pmatrix}\quad \textrm{and} \quad[\Phi(b)]^\epsilon=M.[b]^\epsilon=\begin{pmatrix}q\\ s\end{pmatrix},\] that is  \[[\Phi(a)]=p[a]+r[b]\quad \textrm{and} \quad[\Phi(b)]=q[a]+s[b].\]
Since $[\Phi(a)]$ and $[\Phi(b)]$ are not zero we get \[\tag{*}\begin{pmatrix}d'(\Phi(a))\\ d'(\Phi(b))\end{pmatrix}  =\begin{pmatrix}pd'(a)+rd'(b)\\ qd'(a)+sd'(b)\end{pmatrix} = M. \begin{pmatrix}d'(a)\\ d'(b)\end{pmatrix}\] by Proposition~\ref{propgenus1.2}. By hypothesis $d(a)=d'(\Phi(a))$ and $d(b)=d'(\Phi(b))$, so we conclude using Lemma \ref{orbit}.

\medskip

$(2)\Rightarrow (1)$. By Lemma \ref{orbit}, if $\gcd(d(a),d(b))=\gcd (d'(a),d'(b))$ there exists $M\in{\rm SL}(2,\mathbb Z)$ such that $\begin{pmatrix}d(a)\\d(b)\end{pmatrix}=M.\begin{pmatrix}d'(a)\\d'(b)\end{pmatrix}$. Let $\Phi\in \Homeo^+(S,M)$ such that $G^\epsilon(\Phi)=M$. Then by the equalitiy (*), we get $d(a)=d'(\Phi(a))$ and $d(b)=d'(\Phi(b))$. We conclude using Theorem~\ref{thmAG}.

\end{proof}

\noindent
\textbf{Notation.}
For a surface algebra $\Lambda=(\Delta,d)$ associated to the torus with one boundary component, we denote by $\gcd(\Lambda)$ the derived invariant $\gcd(d(a),d(b))$. This invariant does not depend on the choice of the generators $a$ and $b$ by Theorem \ref{thmgcd}.

\subsection{Examples and consequences}

Consider the following triangultation of the torus with one boundary component and $2$ marked points ($g=b=1$ and $p=2$) and fix two generators $a$ and $b$ of $\pi_1(S)$.
\[\scalebox{0.8}{
\begin{tikzpicture}[scale=1,>=stealth]
\draw[thick, blue] (0,0)-- node (A)[fill=white, inner sep=1pt]{$1$} (4,0);
\draw[thick, blue] (4,0)-- node (B1)[fill=white, inner sep=1pt]{$2$} (4,4);
\draw[thick, blue] (4,4)-- node (A1)[fill=white, inner sep=1pt]{$1$} (0,4);
\draw[thick, blue] (0,4)-- node (B)[fill=white, inner sep=1pt]{$2$} (0,0);

\draw[thick, blue] (0,0).. node (E)[fill=white, inner sep=1pt]{$5$} controls (3.2,0) and (0,3.2).. (0,0);
\draw[thick, blue] (0,0).. node (D)[fill=white, inner sep=1pt]{$4$} controls (3.5,0) and (4,3.5).. (4,4);
\draw[thick, blue] (0,0).. node (C)[fill=white, inner sep=1pt]{$3$} controls (0,3.5) and (3.5,4).. (4,4);

\draw[->, thick] (A)--(B1); \draw[->, thick] (B1)--(D); \draw[->, thick] (D)--(A); \draw[->, thick] (D)--(C); \draw[->, thick] (C)--(E); \draw[->, thick] (E)--(D); \draw[->, thick] (C)--(A1); \draw[->, thick] (A1)--(B); \draw[->, thick] (B)--(C);

\draw {(0,0).. controls (0.1,0) and (1,0).. (1,1).. controls  (0,1) and (0,0.1).. (0,0)}[fill=gray!30];
\draw[thick, green, ->] (-1,2.5)--(5,2.5);\draw[thick, green, ->] (2.5,-1)--(2.5,5);
\node[green] at (4.5,2.7) {$a$};
\node[green] at (2.7,4.5) {$b$}; 
\node at (0,0) {$\bullet$};
\node at (4,0) {$\bullet$};
\node at (0,4) {$\bullet$};
\node at (4,4) {$\bullet$};
\node at (1,1) {$\bullet$};

\end{tikzpicture}}\]

One easily verifies that $\overline{a}^{\Delta}=\alpha_{23}-\alpha_{43}-\alpha_{42}$ and $\overline{b}^{\Delta}=-\alpha_{41}+\alpha_{43}+\alpha_{31}$.

Now consider the following different admissible cuts of $\Delta$, and the corresponding algebras $\Lambda_0$, $\Lambda_1$ and $\Lambda_2$ of global dimension $2$. The angles of degree $1$ are marked in red, and the zero relations in the quiver are marked with dots.

\[\scalebox{0.9}{
\begin{tikzpicture}[scale=0.6,>=stealth]
\draw[thick, blue] (0,0)-- (4,0);
\draw[thick, blue] (4,0)--  (4,4);
\draw[thick, blue] (4,4)--  (0,4);
\draw[thick, blue] (0,4)--  (0,0);

\draw[thick, blue] (0,0)..  controls (3.2,0) and (0,3.2).. (0,0);
\draw[thick, blue] (0,0).. controls (3.5,0) and (4,3.5).. (4,4);
\draw[thick, blue] (0,0).. controls (0,3.5) and (3.5,4).. (4,4);

\draw {(0,0).. controls (0.1,0) and (1,0).. (1,1).. controls  (0,1) and (0,0.1).. (0,0)}[fill=gray!30];
\node at (0,0) {$\bullet$};
\node at (4,0) {$\bullet$};
\node at (0,4) {$\bullet$};
\node at (4,4) {$\bullet$};
\node at (1,1) {$\bullet$};

\draw[thick, red] (0,2)--(0.5,2);\draw[thick, red] (1.4,0.8)--(2.4,0.8);\draw[thick, red] (3.4,2)--(4,2);

\node  (A5) at (0,-4) {$5$};
\node (A1) at (2,-4) {$1$};
\node (A2) at (4,-4) {$2$};
\node (A3) at (1,-2) {$3$};
\node (A4) at (1,-6) {$4$};
\draw[->, thick] (A3)--(A5); \draw[->,thick] (A4)--(A3); \draw[->, thick] (A3)--(A1); \draw[->,thick] (A4)--(A1);
\draw[->,thick] (2.1,-4.1)--(3.8,-4.1); \draw[->,thick] (2.1,-3.9)--(3.8,-3.9);
\draw[thick, dotted] (0.5,-3)--(1,-3.5); \draw[thick, dotted] (1.8,-3)--(2.8,-3.8);\draw[thick, dotted] (1.8,-5)--(2.8,-4.2);

\draw[thick, blue] (8,0)-- (12,0);
\draw[thick, blue] (12,0)--  (12,4);
\draw[thick, blue] (12,4)--  (8,4);
\draw[thick, blue] (8,4)--  (8,0);

\draw[thick, blue] (8,0)..  controls (11.2,0) and (8,3.2).. (8,0);
\draw[thick, blue] (8,0).. controls (11.5,0) and (12,3.5).. (12,4);
\draw[thick, blue] (8,0).. controls (8,3.5) and (11.5,4).. (12,4);

\draw {(8,0).. controls (8.1,0) and (9,0).. (9,1).. controls  (8,1) and (8,0.1).. (8,0)}[fill=gray!30];
\node at (8,0) {$\bullet$};
\node at (12,0) {$\bullet$};
\node at (8,4) {$\bullet$};
\node at (12,4) {$\bullet$};
\node at (9,1) {$\bullet$};

\draw[thick, red] (8,2)--(8.5,2);\draw[thick, red] (10.8,3.8)--(11.8,3);\draw[thick, red] (11.4,2)--(12,2);

\node  (B5) at (8,-4) {$5$};
\node (B1) at (10,-4) {$1$};
\node (B2) at (12,-4) {$2$};
\node (B3) at (9,-2) {$3$};
\node (B4) at (9,-6) {$4$};
\draw[->, thick] (B3)--(B5); \draw[->,thick] (B5)--(B4); \draw[->, thick] (B3)--(B1); \draw[->,thick] (B4)--(B1);
\draw[->,thick] (10.1,-4.1)--(11.8,-4.1); \draw[->,thick] (10.1,-3.9)--(11.8,-3.9);
 \draw[thick, dotted] (9.8,-3)--(10.8,-3.8);\draw[thick, dotted] (9.8,-5)--(10.8,-4.2);
 \draw[thick, dotted] (8.5,-4.5)--(8.5,-3.5);

\draw[thick, blue] (16,0)-- (20,0);
\draw[thick, blue] (20,0)--  (20,4);
\draw[thick, blue] (20,4)--  (16,4);
\draw[thick, blue] (16,4)--  (16,0);

\draw[thick, blue] (16,0)..  controls (19.2,0) and (16,3.2).. (16,0);
\draw[thick, blue] (16,0).. controls (19.5,0) and (20,3.5).. (20,4);
\draw[thick, blue] (16,0).. controls (16,3.5) and (19.5,4).. (20,4);

\draw {(16,0).. controls (16.1,0) and (17,0).. (17,1).. controls  (16,1) and (16,0.1).. (16,0)}[fill=gray!30];

\node at (16,0) {$\bullet$};
\node at (20,0) {$\bullet$};
\node at (16,4) {$\bullet$};
\node at (20,4) {$\bullet$};
\node at (17,1) {$\bullet$};

\draw[thick, red] (18,3.4)--(18,4);\draw[thick, red] (18.8,3.8)--(19.8,3);\draw[thick, red] (19.4,2)--(20,2);

\node  (C5) at (16,-4) {$5$};
\node (C1) at (18,-4) {$1$};
\node (C2) at (20,-4) {$2$};
\node (C3) at (17,-2) {$3$};
\node (C4) at (17,-6) {$4$};
\draw[->, thick] (C3)--(C5); \draw[->,thick] (C5)--(C4); \draw[->, thick] (C2)--(C3); \draw[->,thick] (C4)--(C1);
\draw[->,thick] (18.1,-4.1)--(19.8,-4.1); \draw[->,thick] (18.1,-3.9)--(19.8,-3.9);
 \draw[thick, dotted] (19,-3.4)--(18.8,-3.8);\draw[thick, dotted] (17.8,-5)--(18.8,-4.2);
 \draw[thick, dotted] (16.5,-4.5)--(16.5,-3.5);
 
 \node at (1,-7) {$\gcd(\Lambda_0)=0$};
 \node at (9,-7) {$\gcd(\Lambda_1)=1$};
 \node at (17,-7) {$\gcd(\Lambda_2)=2$};

\end{tikzpicture}}\]

Then we verify that $d_0(a)=1-0-1=0$ and $d_0(b)=-0+0+0=0$ so $\gcd(\Lambda_0)=0$. Similarly we obtain $\gcd(\Lambda_1)=\gcd(-1,1)=1$ and $\gcd(\Lambda_2)=\gcd(-2,2)=2$. Therefore these algebras are not derived equivalent. However they have the same AG-invariant in the sense of \cite{AAG}. Indeed, let $c$ be the loop around the boundary component, then the AG invariant is given by $(d(c)+2,2d(c)+2)$ by \cite[Prop. 5.3]{AG}, and in all these cases we have $d(c)=2$ (see \cite[Prop. 2.7]{AG}). So we observe the following.

\begin{corollary}\label{cor AG} There exist non derived equivalent gentle algebras with the same AG-invariant.
\end{corollary}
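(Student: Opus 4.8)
The plan is to take the three surface algebras $\Lambda_0$, $\Lambda_1$, $\Lambda_2$ constructed on the displayed triangulation as the desired examples, and to verify three things: that each is gentle, that they are pairwise non-derived-equivalent, and that they nevertheless share the same AG-invariant. The genuine work has already been done in Theorem \ref{thmgcd}; the corollary is then obtained by applying it to these explicit examples, so the task reduces to a finite verification.

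First I would record that the algebras $(\Delta,d)$ produced by Definition \ref{def admissible cut} are gentle: at each vertex of $Q^\Delta$ there are at most two arrows in and two out, the Jacobian relations are monomial of length two, and an admissible cut only deletes some arrows and relations, so each $\Lambda_i=(\Delta,d_i)$ remains gentle. For the three algebras here this can simply be read off the displayed quiver with its dotted relations.

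Second, to separate them I would invoke Theorem \ref{thmgcd}, for which it suffices to compute $\gcd(\Lambda_i)=\gcd(d_i(a),d_i(b))$. Using the expressions $\overline{a}^{\Delta}=\alpha_{23}-\alpha_{43}-\alpha_{42}$ and $\overline{b}^{\Delta}=-\alpha_{41}+\alpha_{43}+\alpha_{31}$ together with the degrees of the relevant angles read from the three cuts (the degree-$1$ angles drawn in red), one obtains $(d_0(a),d_0(b))=(0,0)$, $(d_1(a),d_1(b))=(-1,1)$ and $(d_2(a),d_2(b))=(-2,2)$, hence $\gcd(\Lambda_0)=0$, $\gcd(\Lambda_1)=1$ and $\gcd(\Lambda_2)=2$. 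As these three values are distinct, Theorem \ref{thmgcd} shows that $\Lambda_0$, $\Lambda_1$ and $\Lambda_2$ are pairwise non-derived-equivalent.

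Third, I would check that the AG-invariants agree. By \cite[Prop. 5.3]{AG} the AG-invariant of a surface algebra on the torus with one boundary component equals $(d(c)+2,\,2d(c)+2)$, where $c$ is the loop around the boundary, and by \cite[Prop. 2.7]{AG} one has $d(c)=2$ in each case; hence all three algebras have AG-invariant $(4,6)$. The point requiring care is exactly this last step: one must be sure that the formula for the AG-invariant depends only on $d(c)$, so that three algebras with genuinely different gradings cannot be told apart by it — this is the content of the cited proposition, and it is the only place where something beyond a direct computation enters. Combining the three verifications produces gentle algebras with equal AG-invariant that are not derived equivalent, which proves the corollary.
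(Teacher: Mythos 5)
Your proposal is correct and follows essentially the same route as the paper: the same three algebras $\Lambda_0$, $\Lambda_1$, $\Lambda_2$ on the displayed triangulation, the same computation of $\gcd(\Lambda_i)=0,1,2$ via $\overline{a}^{\Delta}$ and $\overline{b}^{\Delta}$, separation by Theorem \ref{thmgcd}, and equality of the AG-invariants via \cite[Prop.~5.3]{AG} and $d(c)=2$ from \cite[Prop.~2.7]{AG}. The only addition is your explicit remark that surface algebras are gentle, which the paper takes as known.
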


We can also prove the following result.

\begin{proposition}\label{prop values} Let $\Lambda$ be a surface algebra associated with a graded triangulation of a torus with one boundary component. Then $0\leq \gcd(\Lambda)\leq \frac{p+2}{2}$ where $p$ is the number of marked points. \end{proposition}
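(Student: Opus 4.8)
The plan is to reduce the statement to bounding the degree of a single well-chosen simple closed curve. First recall that, by Theorem~\ref{thmgcd}, the quantity $\gcd(\Lambda)=\gcd(d(a),d(b))$ does not depend on the chosen basis of $H_1(S,\mathbb Z)$, and that by Proposition~\ref{propgenus1.2} the map $d$ restricts to a $\mathbb Z$-linear functional on $H_1(S,\mathbb Z)$ whose value on any simple closed curve $\gamma$ with $[\gamma]=\lambda[a]+\mu[b]\neq 0$ is $\lambda d(a)+\mu d(b)$. Hence $\gcd(\Lambda)$ is exactly the non-negative generator of the image $d(H_1(S,\mathbb Z))\subseteq\mathbb Z$; in particular it is $0$ (and the inequality is then trivial) precisely when $d$ vanishes on homology. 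Assuming $\gcd(\Lambda)\neq 0$, I would choose a primitive class $e$ with $d(e)=\gcd(\Lambda)$ and represent it by a simple closed curve $\gamma$ in minimal position with respect to the arcs of $\Delta$. Since all simple representatives of $e$ have the same degree, it suffices to bound $|d(\gamma)|$ for this one curve.

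Next I would turn $d(\gamma)$ into a combinatorial count. Because $d$ is an admissible cut, for each internal triangle $\tau$ exactly one of its three arrows $\alpha_\tau$ has degree $1$, all other arrows (including every arrow of a non-internal triangle) having degree $0$; thus the number of degree-one arrows equals the number $t_0$ of internal triangles. Writing $\overline{\gamma}^\Delta=\sum_\alpha n_\alpha\,\alpha$, this gives
\[
d(\gamma)=\sum_{\tau\ \mathrm{internal}} n_{\alpha_\tau},
\]
where $n_{\alpha_\tau}$ is the signed number of passages of $\gamma$ through the distinguished angle of $\tau$. Since $\gamma$ is simple and in minimal position, the strands of $\gamma$ inside a fixed triangle cutting off a fixed corner form a family of disjoint, parallel, coherently oriented arcs; hence each $|n_{\alpha_\tau}|$ equals the number of such strands, with no internal cancellation inside a single triangle.

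An Euler characteristic count shows that a triangulation of the torus with one boundary component carrying $p$ marked points has exactly $p+3$ arcs, $p+2$ triangles, and $p$ boundary segments, each boundary segment lying in a unique triangle. The heart of the matter, and the step I expect to be hardest, is to promote the identity above into the inequality $|d(\gamma)|\le (p+2)/2$, i.e. to show that an embedded curve picks up a net degree of at most half the total number of triangles. The mechanism I would try to exploit is the interplay between the simplicity of $\gamma$ and the defining constraint of an admissible cut: the degree-one angles are forced to sit away from the boundary segments (every angle adjacent to a boundary segment has degree $0$), and the two genuinely uncontractible triangles together with the fan of triangles around the $p$ boundary marked points constrain how often an embedded curve can traverse distinguished angles with a consistent sign. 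Concretely, I would try to charge each passage through a degree-one angle either to a distinct pair of triangles or to a crossing with a boundary-adjacent arc, so that the contributions add up to at most $(p+2)/2$; the examples $\Lambda_0,\Lambda_1,\Lambda_2$ (where $p=2$ and the bound $2$ is attained) indicate that this count is sharp and must use the boundary structure in an essential way, since for the once-punctured torus (Proposition~\ref{propgenus1}) the analogous curves only ever yield $d(a),d(b)\in\{-1,0,1\}$.

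Granting this counting lemma, the proof finishes immediately: $\gcd(\Lambda)=|d(\gamma)|\le (p+2)/2$, while $\gcd(\Lambda)\ge 0$ holds by definition. Thus the only genuine content is the factor-$\tfrac12$ estimate, all the surrounding reductions being supplied by Theorem~\ref{thmgcd} and Propositions~\ref{propgenus1} and~\ref{propgenus1.2}.
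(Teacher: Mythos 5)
Your opening reductions are sound (a primitive class $e$ with $d(e)=\gcd(\Lambda)$ exists by B\'ezout, and Proposition~\ref{propgenus1.2} lets you evaluate $d$ on any simple representative of it), but the proposal has a genuine gap: the inequality $|d(\gamma)|\le\frac{p+2}{2}$, which you yourself single out as ``the heart of the matter'', is never proved. What you offer in its place is a strategy (``I would try to charge each passage through a degree-one angle either to a distinct pair of triangles or to a crossing with a boundary-adjacent arc''), not an argument; and the auxiliary claim that the strands of a simple curve cutting off a fixed corner of a triangle are coherently oriented, with no internal cancellation, is itself asserted without proof. So the only non-trivial step of the proposition is missing.

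Moreover, reducing to a single arbitrary primitive curve $\gamma$ makes the combinatorics strictly harder than necessary: such a curve can cross each arc of $\Delta$ many times, so there is no evident way to bound the number of distinguished angles it traverses by half the number of triangles. The paper instead keeps both generators and uses $\gcd(d(a),d(b))\le\max(|d(a)|,|d(b)|)$, choosing $a$ and $b$ so that their images in the once-punctured torus $\Sigma_{1,1}$ (obtained by collapsing the boundary) each meet exactly two of the three arcs of the induced triangulation, once each. With that choice the decomposition $\overline{\gamma}^\Delta=\overline{\gamma}^\Delta_{\rm bb}+\overline{\gamma}^\Delta_{\rm uc}$ from the proof of Proposition~\ref{propgenus1.2} immediately gives $|d(\overline a^\Delta)|\le p_2+p_3+1$ and $|d(\overline b^\Delta)|\le p_1+p_3+1$, where $p_i$ counts the internal triangles based on the boundary lying over the arc $i$; the Euler-characteristic relation $\sharp Q_0+\sharp Q_2-\sharp Q_1+1=0$ then yields $2(p_1+p_2+p_3)\le p-2q\le p$ and hence the bound $\frac{p+2}{2}$. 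To salvage your route you would need a counting lemma valid for an arbitrary primitive simple closed curve, which is precisely what the paper's choice of generators is designed to avoid.
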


Note that we prove in the next section (Corollary \ref{cor values}) that all integral values in the interval $[0,\frac{p+2}{2}]$ occur.

\begin{proof} Let $\Lambda=(\Delta,d)$ ba a surface algebra associated with a graded triangulation of the torus with one boundary component.
Let $f:S\backslash\partial S\to \Sigma_{1,1}$ be a homeomorphism from the surface $S$ to the one-punctured torus $\Sigma_{1,1}$. Recall from Subsection \ref{subsection 3.4} that there are three kind of triangles in the triangulation $\Delta$:
\begin{itemize}
\item the uncontractible triangles, which are the only two triangles $\tau$ of $\Delta$ such that $f(\tau)$ is a triangle of $f(\Delta)$;
\item the triangles based on the boundary, which are the triangles $\tau$ such that $f(\tau)$ is (homotopic to) an arc of $f(\Delta)$;
\item the triangles homotopic to the bounday, which are the triangles that are contractible to the puncture via $f$.
\end{itemize}
We denote by $1$, $2$ and $3$ the arcs of $f(\Delta)$, and by $\alpha_{12}$, $\alpha_{23}$, $\alpha_{31}$, $\alpha'_{12}$, $\alpha'_{23}$, $\alpha'_{31}$ the arrows of $Q^\Delta$ which are in the uncontractible triangles as in Figure \ref{figure torus}. Denote by $p_1$ (resp. $p_2$, $p_3$) the number of triangles $\tau$ of $\Delta$ which are internal, based on the boundary, and such that $f(\tau)$ is homotopic to the arc $1$ (resp. $2$, $3$). And let $q$ be the number of internal triangles homotopic to the boundary. Let $a$ and $b$ be simple curves that generate $\pi_1(S)$ and such that $f(a)$ and $f(b)$ are in the same configuration as in Figure \ref{figure torus}. Then we have\[ d(\overline{a}^\Delta)=d(\overline{a}_{\rm uc})+d(\overline{a}_{\rm bb})=-d(\alpha'_{23})+d(\alpha_{23})+d(\overline{a}_{\rm bb}).\]
Since the loop $f(a)$ intersects exaclty once the arc $2$ and the arc $3$,  $\overline{a}_{\rm bb}\in \mathbb Z Q_1$ is a linear combination, with coefficient $\pm 1$, of all the arrows coming from triangles $\tau$ based on the boundary such that $f(\tau)$ is the arc $2$ or $3$. Moreover since $d$ is an admissible cut, among these triangles, only the ones that are internal contain an arrow of degree $1$, so we have $|d(\overline{a}_{\rm bb})|\leq p_2+p_3$ and thus $|d(\overline{a}^\Delta)|\leq p_2+p_3+1$.

For the same reasons we get $|d(\overline{b}^\Delta)|\leq p_1+p_3+1$. Hence we obtain the following inequality
\[ \gcd(\Lambda)\leq {\rm max\ }(|d(\overline{a}^\Delta)|,|d(\overline{b}^\Delta)|)\leq p_1+p_2+p_3+1.\]

Now the number of internal triangles is $\sharp Q_2=2+(p_1+p_2+p_3)+q$. Since there are three arrows for each internal triangle and one arrow for each triangle with one side on the boundary we get $\sharp Q_1\geq 3(2 +p_1+p_2+p_3+q)$. The number of vertices of the quiver is $\sharp Q_0=6g-6+3b+p=p+3$. From the equality $\sharp Q_0+\sharp Q_2-\sharp Q_1+1=0$ (cf \ref{equationQ}) we obtain 
\[2(p_1+p_2+p_3)\leq p-2q\leq p.\] Therefore we obtain $\gcd (\Lambda)\leq \frac{p+2}{2}$.

\end{proof}

\section{Application to gentle two-cycle algebras}

In this section we relate our result to \cite{BM} on gentle two cycles algebras. In particular Theorem \ref{thmgcd} gives a partial positive answer to the conjecture stated in the introduction of \cite{BM}.  

First recall that a gentle $\ell$-cycle algebra is a gentle algebra $\Lambda=kQ/I$ with the property that $\sharp Q_1-\sharp Q_0 =\ell -1$. 

Let $\Delta$ be an ideal triangulation of a surface $(S,M)$ of genus $g$ with $b$ boundary components. 
\begin{lemma} Let $C_\bullet$ be the complex of $\mathbb Z$-modules \[\xymatrix{C_\bullet:\mathbb Z Q_2\ar[r]^-{\partial_1} & \mathbb Z Q_1\ar[r]^{\partial_0} & \mathbb Z Q_2}\] defined by $\partial_1(\tau)=\alpha_\tau+\beta_\tau+\gamma_\tau$ and $\partial_0(\alpha)=t(\alpha)-s(\alpha)$. Its homology satisfies $H_0(C_\bullet)\simeq \mathbb Z$, $H^1(C_\bullet)\simeq \mathbb Z^{2g+b-1}$ and $H_2(C_\bullet)=0$. 
\end{lemma}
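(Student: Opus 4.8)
The plan is to treat the three homology groups of $C_\bullet$ one at a time, reading $H_2=\ker\partial_1$ and $H_0=\operatorname{coker}\partial_0$ directly off the two differentials and importing $H_1$ from the identification $H_1(C_\bullet)\simeq H_1(S,\mathbb Z)$ that is already available.

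First I would dispose of $H_2$. Each internal triangle $\tau\in Q_2$ gives the three arrows $\alpha_\tau,\beta_\tau,\gamma_\tau$, and since an arrow of $Q^\Delta$ is an oriented internal angle located at a single corner of a single triangle, the elements $\{\alpha_\tau,\beta_\tau,\gamma_\tau\}_{\tau\in Q_2}$ are pairwise distinct members of the basis $Q_1$ of $\mathbb Z Q_1$. Thus the vectors $\partial_1(\tau)=\alpha_\tau+\beta_\tau+\gamma_\tau$ have pairwise disjoint supports, a relation $\sum_\tau n_\tau\,\partial_1(\tau)=0$ forces all $n_\tau=0$, and $\partial_1$ is injective. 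Hence $H_2(C_\bullet)=\ker\partial_1=0$.

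For $H_1$ I would simply invoke \cite[Lemma 2.2]{AG}, recalled in the proof of Lemma~\ref{lemma d-d' homology}, which gives $H_1(C_\bullet)\simeq H_1(S,\mathbb Z)$, and then identify the right-hand side. As $b\geq 1$ the surface has nonempty boundary, so $S\backslash\partial S$ deformation retracts onto the graph $R$ of Lemma~\ref{lemma retract}; in particular $S$ is homotopy equivalent to a wedge of circles, and from $\chi(S)=2-2g-b$ one reads $H_1(S,\mathbb Z)\simeq\mathbb Z^{1-\chi(S)}=\mathbb Z^{2g+b-1}$.

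The remaining and, to my mind, only substantive point is $H_0=\operatorname{coker}\partial_0=\mathbb Z Q_0/\langle t(\alpha)-s(\alpha):\alpha\in Q_1\rangle$, which is the degree-zero homology of the graph $\Gamma$ with vertex set $Q_0$ and edge set $Q_1$, hence free abelian of rank the number of connected components of $\Gamma$. I would prove $\Gamma$ connected using the observation that any side shared by two triangles of $\Delta$ is necessarily an arc, since a boundary segment belongs to a single triangle; consequently two arcs occurring as distinct sides of one triangle are always joined by an arrow. Connectedness of $S$ makes the dual graph of $\Delta$ (triangles, adjacent when they share a side) connected, so for two given arcs one may choose a path of triangles joining the triangles that contain them; recording the successive shared sides, which are arcs, yields a path between the two arcs in $\Gamma$. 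Therefore $\Gamma$ is connected and $H_0(C_\bullet)\simeq\mathbb Z$. As a consistency check, this is compatible with comparing the Euler characteristic $\sharp Q_2-\sharp Q_1+\sharp Q_0$ of $C_\bullet$ to $\chi(S)=2-2g-b$ using the values just found for $H_1$ and $H_2$. The main obstacle is thus the connectivity argument for $\Gamma$, where self-folded triangles would need a little extra care.
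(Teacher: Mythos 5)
Your proposal is correct and follows essentially the same route as the paper: injectivity of $\partial_1$ because each arrow lies in at most one internal triangle, connectedness of the quiver to identify $\operatorname{coker}\partial_0$ with $\mathbb Z$, and an appeal to \cite{AG} for $H_1$ (the paper cites \cite[Lemma 2.3]{AG} directly for the rank $2g+b-1$, whereas you combine \cite[Lemma 2.2]{AG} with the homotopy type of $S$). The only other difference is that you spell out the connectivity of $Q^\Delta$ via the dual graph of the triangulation, a point the paper simply asserts.
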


\begin{proof}
Any arrow $\alpha$ belongs to at most one internal triangle in $Q_2$, so $\partial_1$ is injective. Furthermore the quiver is connected so the kernel of the map $\mathbb Z Q_0\to \mathbb Z$ associating $1$ to any vertex $i\in Q_0$ is generated by $\{ t(\alpha)-s(\alpha),\  \alpha\in Q_1\}$ so the cokernel of $\partial_0$ is isomorphic to $\mathbb Z$. The fact that $H_1(C_\bullet)\simeq\mathbb Z^{2g+b-1}$ is proved in \cite[Lemma 2.3]{AG}.
\end{proof}

Hence we have \begin{equation}\label{equationQ}\sharp Q_1^\Delta -\sharp Q_0^\Delta -\sharp Q_2^\Delta = 2g+b-2.\end{equation}

Now let $d:Q_1^\Delta\to Z$ be an admissible cut and $\Lambda =(\Delta,d)$ be the corresponding surface algebra. Then we have $\sharp Q_0^\Lambda=\sharp Q_0^\Delta$ and $\sharp Q_1^\Lambda=\sharp Q_1^\Delta-\sharp Q_2^\Delta$. So together with the results of \cite{AO2} we deduce the following.

\begin{corollary}
The surface algebra $\Lambda=(\Delta,d)$ is a gentle 
\begin{itemize}
\item $0$-cycle algebra if and only if $g=0$ and $b=1$, that is if and only if $\Lambda$ is an algebra of global dimension $2$ of derived type $A_n$;
\item $1$-cycle algebra if and only if $g=0$ and $b=2$, that is if and only if $\Lambda$ is an algebra of global dimension $2$ of cluster type $\widetilde{A}_{p,q}$ (in the sense of \cite{AO2});
\item $2$-cycle algebra if and only if $g=0$ and $b=3$ or $g=b=1$.
\end{itemize}
\end{corollary}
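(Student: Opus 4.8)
The plan is to reduce the whole statement to the single numerical identity $\sharp Q_1^\Lambda-\sharp Q_0^\Lambda=2g+b-2$ and then to solve the resulting linear equation in $g$ and $b$ separately for $\ell=0,1,2$.

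First I would use that a surface algebra $\Lambda=(\Delta,d)$ is automatically gentle: the admissible cut removes exactly one arrow from each of the $\sharp Q_2^\Delta$ internal triangles, so that $\sharp Q_0^\Lambda=\sharp Q_0^\Delta$ and $\sharp Q_1^\Lambda=\sharp Q_1^\Delta-\sharp Q_2^\Delta$ as already recorded, and the Jacobian relations become the length-two zero relations of a gentle algebra. Granting gentleness, the only quantity to determine is $\sharp Q_1^\Lambda-\sharp Q_0^\Lambda$, which by definition equals $\ell-1$ precisely when $\Lambda$ is a gentle $\ell$-cycle algebra. Substituting these two relations into equation~(\ref{equationQ}) gives
\[\sharp Q_1^\Lambda-\sharp Q_0^\Lambda=\sharp Q_1^\Delta-\sharp Q_2^\Delta-\sharp Q_0^\Delta=2g+b-2,\]
so that $\Lambda$ is a gentle $\ell$-cycle algebra if and only if $2g+b-1=\ell$.

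It then remains to list the solutions of $2g+b=\ell+1$ under the standing constraints $g\geq 0$ and $b\geq 1$ (the surface carries marked points, hence at least one boundary component). For $\ell=0$ the unique solution is $(g,b)=(0,1)$; for $\ell=1$ it is $(g,b)=(0,2)$; and for $\ell=2$ the two solutions are $(g,b)=(0,3)$ and $(g,b)=(1,1)$, which is exactly the trichotomy in the statement. Finally I would invoke \cite{AO2} to name the two degenerate cases: the disc $(g,b)=(0,1)$ produces an algebra of global dimension $2$ and derived type $A_n$, and the annulus $(g,b)=(0,2)$ produces an algebra of global dimension $2$ and cluster type $\widetilde A_{p,q}$. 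The arithmetic above is immediate; the only genuine input is importing these two type identifications correctly from \cite{AO2}, after which the corollary follows at once from the identity $\sharp Q_1^\Lambda-\sharp Q_0^\Lambda=2g+b-2$.
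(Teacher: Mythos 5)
Your proposal is correct and follows exactly the paper's route: the paper derives the corollary directly from the identity $\sharp Q_1^\Delta-\sharp Q_0^\Delta-\sharp Q_2^\Delta=2g+b-2$ together with $\sharp Q_0^\Lambda=\sharp Q_0^\Delta$ and $\sharp Q_1^\Lambda=\sharp Q_1^\Delta-\sharp Q_2^\Delta$, yielding $\ell=2g+b-1$, and then cites \cite{AO2} for the type identifications in the cases $(g,b)=(0,1)$ and $(0,2)$. Your enumeration of the solutions of $2g+b=\ell+1$ under $g\geq 0$, $b\geq 1$ matches the trichotomy in the statement.
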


In the case $g=0$ and $b=3$, it is proved in \cite{DRS} that $\sum_{(m,n)}\Phi_\Lambda(m,n)=3$ where $\Phi_\Lambda$ is the invariant introduced by Avella-Alaminos and Geiss, that is $\Lambda$ is nondegenerate in the sense of \cite{BM}. It follows also from \cite[Corollary 5.6]{AG} that the derived equivalence class of $\Lambda$ is exactly determined by its AG-invariant. This was also proved in \cite[Theorem~1]{BM} and in \cite{AA}. 

In the case $g=b=1$ then we have  $\sum_{(m,n)}\Phi_\Lambda(m,n)=1$, so $\Lambda$ is degenerate in the sense of \cite{BM}. For $s\in \mathbb N^*$ and $r=0,\ldots, s-1$, Bobinski and Malicki defined the algebra $\Lambda_0(s,r)$ by the quiver:

\[\scalebox{0.8}{
\begin{tikzpicture}[scale=1.2,>=stealth]

\node (A) at (0,0) {$s+1$}; 
\node (B) at (-1,1) {$s$}; 
\node (C) at (0,2) {$s-1$}; \node (D) at (1,2){}; \node (E) at (3,2){}; \node (F) at (4,2) {$3$}; \node (G) at (5,1) {$2$}; \node (H) at (4,0) {$1$}; 

\draw[->] (A)--node [xshift=-5pt, yshift=-5pt]{$\alpha_s$}(B);\draw[->] (B)--node [xshift=-5pt, yshift=5pt]{$\alpha_{s-1}$}(C);\draw[->](C)--(D);\draw[loosely dotted, thick](D)--(E);\draw[->] (E)--(F);\draw[->] (F)--node [xshift=5pt, yshift=5pt]{$\alpha_2$}(G); \draw[->] (G)--node [xshift=5pt, yshift=-5pt]{$\alpha_1$}(H);
\draw [->](3.8,0.1)--node [yshift=5pt]{$\gamma$} (0.5,0.1);\draw [->](3.8,-0.1)--node [yshift=-5pt]{$\beta$} (0.5,-0.1);
\end{tikzpicture}}\]
with the relations $\alpha_s\beta$, $\gamma\alpha_1$ and $\alpha_i\alpha_{i+1}$ for $i=1,\ldots, r$.

For $s\in \mathbb N^*$, they define an algebra $\Lambda_0'(s,0)$ by the quiver 

\[\scalebox{0.8}{
\begin{tikzpicture}[scale=1.2,>=stealth]

\node (A) at (0,0) {$1$}; 
\node (B) at (-1,1) {$2$}; 
\node (C) at (0,2) {$3$}; \node (D) at (1,2){}; \node (E) at (3,2){}; \node (F) at (4,2) {$s-1$}; \node (G) at (5,1) {$s$}; \node (H) at (4,0) {$s+1$}; 
\node (I) at (6,0) {$s+2$};

\draw[<-] (A)--node [xshift=-5pt, yshift=-5pt]{$\alpha_1$}(B);\draw[<-] (B)--node [xshift=-5pt, yshift=5pt]{$\alpha_2$}(C);\draw[<-](C)--(D);\draw[loosely dotted, thick](D)--(E);\draw[->] (E)--(F);\draw[<-] (F)--node [xshift=5pt, yshift=5pt]{$\alpha_{s-1}$}(G); \draw[<-] (G)--node [xshift=-5pt, yshift=5pt]{$\alpha_s$}(H);
\draw [->](H)--node [yshift=-5pt]{$\beta$} (A);
\draw [->] (5.6,0.1)--node [yshift=5pt]{$\gamma$} (4.4,0.1);\draw [->] (5.6,-0.1)--node [yshift=-5pt]{$\delta$} (4.4,-0.1);

\end{tikzpicture}}\]
 with the relations $\alpha_p\gamma$ and $\beta\delta$.

They proved in \cite[Theorem 2]{BM} that a degenerate gentle $2$-cycle algebra is derived equivalent to $\Lambda_0(s,r)$ for $s\in\mathbb N^*$ and $0\leq r\leq s-1$ or to $\Lambda'_0(s,0)$ for $s\in\mathbb N^*$ and they conjecture that this list is minimal.

\begin{proposition}\label{propgentle2}
\begin{enumerate}
\item If $s\geq 2$ then the algebra $\Lambda'_0(s,0)$ is a surface algebra associated with the torus with one boundary component with $p=s-1$ marked points on the boundary and with invariant $\gcd(\Lambda)=0$.

\item
If $s\geq 3$ and $0\leq r\leq [\frac{s}{2}]-1$ the algebra $\Lambda_0(s,r)$ is derived equivalent to a surface algebra associated with a torus with one boundary component with $p=s-2$ marked points on the boundary and with invariant $\gcd(\Lambda)=r+1$.
\end{enumerate}
\end{proposition}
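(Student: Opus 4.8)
The plan is to realise each of the Bobi\'nski--Malicki algebras explicitly, up to derived equivalence, as a surface algebra attached to a triangulation of the torus with one boundary component, and then to read off the invariant $\gcd(\Lambda)$ directly from the degree map via Proposition~\ref{propgenus1.2}. The starting observation is numerical: by the count $\sharp Q_0=6g-6+3b+p=p+3$ used in the proof of Proposition~\ref{prop values}, a surface algebra on this surface with $p$ marked points has exactly $p+3$ vertices. For part~(1) this gives $p+3=s+2$, matching the number of vertices of $\Lambda'_0(s,0)$, and for part~(2) it gives $p+3=s+1$, matching $\Lambda_0(s,r)$. So the prescribed numbers of marked points are forced, and the real task is to produce the correct triangulation together with a compatible admissible cut.

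For part~(1) I would exhibit an explicit ideal triangulation $\Delta$ of the torus with one boundary component carrying $s-1$ marked points, together with an admissible cut $d$, and check by inspection that the quiver $Q^\Delta$ with the degree-zero relations induced by $d$ is exactly the quiver with relations defining $\Lambda'_0(s,0)$: the arrows $\alpha_1,\dots,\alpha_s$ should arise from the fan of triangles based on the boundary, while the arrows $\beta,\gamma,\delta$ and the two zero-relations should come from the two uncontractible triangles. This yields an isomorphism $\Lambda'_0(s,0)\cong(\Delta,d)$ on the nose. For part~(2), where the statement asks only for a derived equivalence, I would similarly build a triangulation with $s-2$ marked points and a cut depending on $r$; if the resulting surface algebra is not literally $\Lambda_0(s,r)$, I would connect the two either by recognising $\Lambda_0(s,r)$ itself as a surface algebra and then invoking Theorem~\ref{thmgcd}, or by a sequence of graded flips $\mu_i^L$, each of which preserves the derived equivalence class and, by Corollary~\ref{corollary d-d' homology}, the invariant $\gcd$.

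Once the surface realisation is in place, computing $\gcd(\Lambda)$ is a direct imitation of the worked example preceding Corollary~\ref{cor AG}. I would fix simple closed curves $a$ and $b$ generating $H_1(S,\mathbb Z)$, compute $\overline{a}^\Delta$ and $\overline{b}^\Delta$ in $\mathbb Z Q_1$ by recording the oriented angles crossed by $a$ and $b$ (using the convention $\alpha_{k\ell}=-\alpha_{\ell k}$), and then apply $d$. In part~(1) I expect $d(a)=d(b)=0$, whence $\gcd(\Lambda)=\gcd(0,0)=0$; in part~(2) I expect the cut to be arranged so that $\gcd(d(a),d(b))=r+1$. A useful consistency check, which also explains the hypothesis on $r$, is Proposition~\ref{prop values}: with $p=s-2$ it gives $\gcd(\Lambda)\le\frac{p+2}{2}=\frac{s}{2}$, so the value $r+1$ is attainable only when $r+1\le s/2$, that is $r\le[\frac{s}{2}]-1$, which is exactly the stated range.

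The main obstacle is the explicit combinatorial bookkeeping of the second paragraph: drawing a triangulation whose quiver, after cutting, matches the Bobi\'nski--Malicki presentation with the correct orientations and the two zero-relations, and --- for part~(2) --- choosing the cut so that the parameter $r$ governs $\gcd(d(a),d(b))$ precisely. Controlling the signs in $\overline{a}^\Delta$ and $\overline{b}^\Delta$ and verifying that the cut is admissible (degree $0$ on every non-internal angle and degrees summing to $1$ on each internal triangle) is the delicate step; by contrast, the passage from isomorphism to derived equivalence in part~(2) is essentially free from Theorem~\ref{thmgcd}.
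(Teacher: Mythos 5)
Your plan for part (1) coincides with the paper's proof: exhibit an explicit graded triangulation of the torus with one boundary component and $s-1$ marked points whose degree-zero algebra is $\Lambda_0'(s,0)$ on the nose, then verify $d(a)=d(b)=0$ for generators $a,b$ of $H_1(S,\mathbb Z)$. The vertex count $\sharp Q_0=p+3$ and the consistency check against Proposition \ref{prop values} are both sound observations.

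There is, however, a genuine gap in part (2). For $r\ge 1$ the algebra $\Lambda_0(s,r)$ cannot itself be a surface algebra: each zero-relation of a surface algebra comes from a distinct internal triangle, and since an arrow lies in at most one internal triangle the zero-relations of a surface algebra are pairwise arrow-disjoint; the relations $\gamma\alpha_1,\alpha_1\alpha_2,\dots,\alpha_r\alpha_{r+1}$ of $\Lambda_0(s,r)$ overlap in the arrows $\alpha_1,\dots,\alpha_r$. Consequently both of the bridges you propose fail: ``recognising $\Lambda_0(s,r)$ itself as a surface algebra and invoking Theorem~\ref{thmgcd}'' is impossible, and a sequence of graded flips $\mu_i^L$ only moves within the class of surface algebras, so it can never reach $\Lambda_0(s,r)$ either (Theorem~\ref{thmgcd} likewise only compares two surface algebras). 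The missing idea is an \emph{external} derived equivalence: the paper first applies \cite[Lemma 1.1]{BM} repeatedly to replace $\Lambda_0(s,r)$ by the derived-equivalent algebra with the same quiver and the pairwise arrow-disjoint relations $\alpha_s\beta$, $\gamma\alpha_1$ and $\alpha_{2i}\alpha_{2i+1}$ for $i=1,\dots,r$, and only then realises \emph{that} algebra as a surface algebra with $p=s-2$ marked points and computes $d(a)=-(r+1)$, $d(b)=0$. This spreading-out of the relations is also what forces $2r+1<s$, i.e.\ $r\le[\frac{s}{2}]-1$, giving a sharper explanation of the hypothesis on $r$ than the a posteriori bound from Proposition~\ref{prop values}. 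Without some such tilting step your argument cannot get started for $r\ge 1$.
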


\begin{proof}
For the first statement, one  can check that $\Lambda_0'(s,0)$ is the surface alegbra associated with graded triangulation given in figure \ref{figure2}, where the angles of degree $1$ are marked in red.

\begin{figure}
\[\scalebox{0.6}{
\begin{tikzpicture}[scale=1.4,>=stealth]

\draw[fill=gray!20] (0,0) circle (1); 
\node (A00) at (0,1) {$\bullet$};
\node (B00) at (0.5,0.86) {$\bullet$}; 
\node (F00) at (-0.5,0.86) {$\bullet$};
\node at (0.86,0.5) {$\bullet$};
\node at (1,0){$\bullet$};

\node at (6.86,0.5) {$\bullet$};
\node at (7,0){$\bullet$};
\node at (0.86,6.5) {$\bullet$};
\node at (1,6){$\bullet$};
\node at (6.86,6.5) {$\bullet$};
\node at (7,6){$\bullet$};

\draw[thick, blue] (0,1)..  node (B)[fill=white, inner sep=1pt]{$2$}controls(0,1) and (-2.2,6.5)..(-0.5,6.86);
\draw[thick, blue] (0,1).. node (A)[fill=white, inner sep=1pt]{$1$}controls (-1,1) and (-3,7)..(0,7);
\draw[thick, blue] (0,1)..  node (C)[fill=white, inner sep=1pt]{$s-1$}controls(0,1) and (2.2,6.5)..(0.5,6.86);
\draw[thick, blue] (0,1).. node (D)[fill=white, inner sep=1pt, yshift=20pt]{$s$}controls (1,1) and (3,7)..(0,7);

\draw[thick, blue] (0,1)--node (G)[fill=white, inner sep=1pt]{$s+1$}(6,1);
\draw[thick, blue] (0,7)--node (F)[fill=white, inner sep=1pt]{$s+1$}(6,7);
\draw[thick, blue] (0,1)..node (E)[fill=white, inner sep=1pt]{$s+2$}controls (2,1) and (3,7)..(6,7);
\draw[thick, blue] (6,1).. node (H)[fill=white, inner sep=1pt]{$1$}controls (5,1) and (3,7)..(6,7);

\draw[blue, very thick,  dotted] (C)--(B);

\draw[thick,->] (A)--(B);
\draw[thick, ->] (C)--(D);
\draw[thick,->] (F)--(D);
\draw[thick, ->] (E)--(F);
\draw[thick,->] (E)--(G);
\draw[thick, ->] (G)--(H);

\draw[green, very thick, ->] (-2,2)--node [fill=white, inner sep=1pt]{$a$}(8,2);
\draw[green, very thick, ->] (2,-2)--node [fill=white, inner sep=1pt]{$b$}(2,8);
\draw[thick, red] (0.6,1.5)--(0.97,1.5);
\draw[thick, red] (4,6)--(4.6,6);

\draw[fill=gray!20] (6,0) circle (1);
\node (A60) at (6,1) {$\bullet$};
\node (B60) at (6.5,0.86) {$\bullet$}; 
\node (F60) at (5.5,0.86) {$\bullet$};
\draw[fill=gray!20] (0,6) circle (1);
\node (A06) at (0,7) {$\bullet$};
\node (B06) at (0.5,6.86) {$\bullet$}; 
\node (F06) at (-0.5,6.86) {$\bullet$};
 \draw[fill=gray!20] (6,6) circle (1);
\node (A66) at (6,7) {$\bullet$};
\node (B66) at (6.5,6.86) {$\bullet$}; 
\node (F66) at (5.5,6.86) {$\bullet$};

\end{tikzpicture}}\]
\caption{A graded triangulation associated to the algebra $\Lambda'_0(s,0)$ .} \label{figure2}
\end{figure}

One easily checks that for the curves $a$ and $b$ as in Figure \ref{figure2} we have $d(a)=d(b)=0$, so $\gcd(\Lambda)=0$.

\medskip

Using successively \cite[Lemma1.1]{BM}, one shows that the algebra $\Lambda_0(s,r)$ is derived equivalent to the algebra with the same quiver and with the relations: $\alpha_s\beta$, $\gamma\alpha_1$ and $\alpha_{2i}\alpha_{2i+1}$ for $i=1,\ldots, r$. This algebra is the surface algebra associated with the  graded triangulation given in figure~\ref{figure1}, where the angles of degree $1$ are marked in red.

\begin{figure}
\[\scalebox{0.6}{
\begin{tikzpicture}[scale=1.4,>=stealth]

\draw[fill=gray!20] (0,0) circle (1); 
\node (A00) at (0,1) {$\bullet$};
\node at (0.3,0.95) {$\bullet$};
\node (B00) at (0.5,0.86) {$\bullet$}; \node (C00) at (0.5,-0.86){$\bullet$};\node (D00) at (0,-1){$\bullet$};
\node (E00) at (-0.5,-0.86) {$\bullet$};
\node (F00) at (-0.5,0.86) {$\bullet$};
\node at (0.86,0.5) {$\bullet$};
\node at (1,0){$\bullet$};

\node at (6.86,0.5) {$\bullet$};
\node at (7,0){$\bullet$};
\node at (0.86,6.5) {$\bullet$};
\node at (1,6){$\bullet$};
\node at (6.86,6.5) {$\bullet$};
\node at (7,6){$\bullet$};

\draw[blue, thick] (B00) arc (15:135:0.3);
\draw [blue, thick] (D00) arc (225:345:0.3);
\node [blue] at (0.2,-0.8) {$3$};
\node [blue] at (0.2,0.7) {$2r+1$};

\draw[blue, thick] (0,1)-- (0,5);
\draw[blue, thick] (0.5,0.86)..node (B)[fill=white, inner sep=1pt]{$2r$}controls (0.5,0.86) and (0,4)..(0,5);
\draw[blue, thick] (0,-1).. node [fill=white, inner sep=1pt]{$2$}controls (3.5,-2) and (1,4)..(0,5);
\draw[blue, thick] (0.5, -0.86)..node (C) [fill=white, inner sep=1pt]{$4$} controls (2.5,-1.5) and (0.5,4)..(0,5);
\draw[thick, blue] (0,1)--(-0.5,5.14);
\draw[thick, blue] (0,1)..  node (A)[fill=white, inner sep=1pt]{$s-1$}controls(0,1) and (-2.2,6.5)..(-0.5,6.86);
\draw[thick, blue] (0,1).. node [fill=white, inner sep=1pt]{$s$}controls (-1,1) and (-3,7)..(0,7);

\draw[blue, very thick,  dotted] (A)--(-0.25,3.07);

\draw[blue, very thick,  dotted] (B)--(C);

\draw[green, very thick, ->] (-2,2)--node [fill=white, inner sep=1pt]{$a$}(8,2);
\draw[green, very thick, ->] (2,-2)--node [fill=white, inner sep=1pt]{$b$}(2,8);
\draw[thick, red] (0,4)..controls (1,4) and (1.5,4.7)..(1.5,4.6);
\draw[thick, red] (3.9,4.5)--(4.4,4.5);

\draw[thick, blue] (6,1).. node [fill=white, inner sep=1pt]{$s$}controls (5,1) and (3,7)..(6,7);

\draw[thick, blue] (0,-1)..  node [fill=white, inner sep=1pt]{$s+1$}controls (2,-3) and (4,2)..(6,1);
\draw[thick,blue] (0,5)..  node [fill=white, inner sep=1pt]{$s+1$}controls (2,3) and (4,8)..(6,7);

\draw[thick, blue] (0,-1).. node [fill=white, inner sep=1pt]{$1$} controls (5,-3) and (2.5,8).. (6,7);

\draw[fill=gray!20] (6,0) circle (1);
\node (A60) at (6,1) {$\bullet$};
\node at (6.3,0.95) {$\bullet$};
\node (B60) at (6.5,0.86) {$\bullet$}; \node (C60) at (6.5,-0.86){$\bullet$};\node (D60) at (6,-1){$\bullet$};
\node (E60) at (5.5,-0.86) {$\bullet$};
\node (F60) at (5.5,0.86) {$\bullet$};
\draw[blue, thick] (B60) arc (15:135:0.3);
\draw [blue, thick] (D60) arc (225:345:0.3);

\draw[fill=gray!20] (0,6) circle (1);
\node (A06) at (0,7) {$\bullet$};
\node at (0.3,6.95) {$\bullet$};
\node (B06) at (0.5,6.86) {$\bullet$}; \node (C06) at (0.5,5.14){$\bullet$};
\node (D06) at (0,5){$\bullet$};
\node (E06) at (-0.5,5.14) {$\bullet$};
\node (F06) at (-0.5,6.86) {$\bullet$};
\draw[blue, thick] (B06) arc (15:135:0.3);
\draw [blue, thick] (D06) arc (225:345:0.3);

 \draw[fill=gray!20] (6,6) circle (1);
\node (A66) at (6,7) {$\bullet$};
\node at (6.3,6.95) {$\bullet$};
\node (B66) at (6.5,6.86) {$\bullet$}; \node (C66) at (6.5,5.14){$\bullet$};\node (D66) at (6,5){$\bullet$};
\node (E66) at (5.5,5.14) {$\bullet$};
\node (F66) at (5.5,6.86) {$\bullet$};
\draw[blue, thick] (B66) arc (15:135:0.3);
\draw [blue, thick] (D66) arc (225:345:0.3);
\end{tikzpicture}}\]
\caption{A graded triangulation associated to the algebra $\Lambda_0(s,r)$ .} \label{figure1}
\end{figure}

One then checks that for the closed curves $a$ and $b$ marked in green on the figure we have $d(a)=-(r+1)$ and $d(b)=0$ so $\gcd(\Lambda)=r+1$.

\medskip

\end{proof}

Then applying Theorem \ref{thmgcd} one gets the following consequence.
\begin{corollary}\label{cor bobinski}
The algebras of the list $\Lambda_0(s,r)$ $s\geq 3, \ r\leq [\frac{s}{2}-1]$ and $\Lambda_0'(s,0)$ $s\geq 2$ are  pairewise non derived equivalent.
\end{corollary}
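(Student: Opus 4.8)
The plan is to combine Theorem~\ref{thmgcd} with one additional, elementary derived invariant: the rank of the Grothendieck group, which for a finite-dimensional algebra equals the number $\sharp Q_0$ of vertices of its quiver and is preserved under derived equivalence. For a surface algebra on the torus with one boundary component the formula $\sharp Q_0 = 6g-6+3b+p = p+3$ (cf.\ \ref{equationQ}) shows that this number records exactly the number $p$ of marked points. Since $g=b=1$ is fixed and there is a single boundary component, two such surface algebras with the same $p$ live on homeomorphic marked surfaces, so Theorem~\ref{thmgcd} applies and tells us they are derived equivalent if and only if their invariants $\gcd(\Lambda)$ agree. Thus, on this family, the pair $(p,\gcd(\Lambda))$ is a complete derived invariant: equal pairs force derived equivalence by Theorem~\ref{thmgcd}, while distinct values of $p$ already forbid it via the rank of $K_0$.

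Next I would read off this pair from Proposition~\ref{propgentle2}, using transitivity of derived equivalence to attach the invariants of the associated surface algebra to $\Lambda_0(s,r)$ itself. For $\Lambda'_0(s,0)$ with $s\geq 2$ one has $p=s-1$ and $\gcd(\Lambda)=0$, giving the pair $(s-1,0)$. For $\Lambda_0(s,r)$ with $s\geq 3$ and $0\leq r\leq [\frac{s}{2}]-1$ one has $p=s-2$ and $\gcd(\Lambda)=r+1$, giving the pair $(s-2,r+1)$. It then remains to check that this assignment of pairs is injective on the index set of the list.

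Finally I would run through the three kinds of comparison. Within the $\Lambda'_0(s,0)$-family, distinct $s$ give distinct first coordinates $s-1$, hence non-equivalence. Within the $\Lambda_0(s,r)$-family, the first coordinate $s-2$ recovers $s$ and then the second coordinate $r+1$ recovers $r$, so distinct pairs $(s,r)$ give distinct invariants. Across the two families, every $\Lambda_0(s,r)$ has second coordinate $r+1\geq 1$ while every $\Lambda'_0(s,0)$ has second coordinate $0$, so their pairs can never coincide. In each case the two algebras fail to share the invariant $(p,\gcd(\Lambda))$ and are therefore not derived equivalent.

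The main subtlety, rather than a genuine obstacle, is that $\gcd(\Lambda)$ alone does not separate the list: it is insensitive to $s$ in the $\Lambda_0(s,r)$-family and is identically $0$ on the whole $\Lambda'_0(s,0)$-family. One must therefore supplement Theorem~\ref{thmgcd} with the vertex count to pin down $s$ (equivalently $p$), and only the combination of the two invariants yields pairwise non-equivalence. Some care is also needed to confirm that the hypotheses of Theorem~\ref{thmgcd} really are met whenever two candidates share the same $p$, i.e.\ that equal numbers of marked points on a single boundary component of a genus-one surface do place both algebras on the same marked surface.
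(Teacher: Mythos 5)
Your proof is correct and follows essentially the same route as the paper, whose own proof simply cites Theorem~\ref{thmgcd} together with Proposition~\ref{propgentle2} without further detail. The one point you make explicit that the paper leaves implicit --- and which is genuinely needed, since Theorem~\ref{thmgcd} only compares surface algebras attached to the \emph{same} marked surface --- is that algebras with different numbers $p$ of marked points are already separated by the rank of the Grothendieck group, i.e.\ by $\sharp Q_0=p+3$; your complete invariant $(p,\gcd(\Lambda))$ packages this correctly, and your three-way injectivity check on the list is right.
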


\begin{remark} Proposition \ref{prop values} may suggest that the algebras $\Lambda_0(s,r)$ for $s\geq 3$ and  $[\frac{s}{2}]+1\leq r\leq s-1$, and for $s=2$, are not derived equivalent to a surface algebra. For instance, one can easily see that $\Lambda_0(2,0)$ has global dimension $2$ but is not a surface algebra. 
\end{remark}

\begin{remark}
In a subsequent paper \cite{Bob} Bobinski proved that all algebras $\Lambda_0(s,r)$ are pairwise non derived equivalent using Corollary \ref{cor bobinski}. 
\end{remark}

From Proposition \ref{propgentle2} one also deduces the following (compare with Proposition \ref{prop values}).
\begin{corollary}\label{cor values} Let $p$ and $0\leq r\leq \frac{p+1}{2}$ be integers. Then there exists a graded triangulation $(\Delta,d)$ of the torus with one boundary component and $p$ marked points on $\partial S$ such that the invariant of the corresponding surface algebra is $\gcd(\Lambda)=r$. 
\end{corollary}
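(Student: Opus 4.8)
The plan is to deduce the statement directly from Proposition~\ref{propgentle2} by solving for the parameters of the two families $\Lambda'_0(s,0)$ and $\Lambda_0(s,r')$ in terms of $p$ and of the prescribed value $r$. Throughout I assume $p\geq 1$, which is forced by the standing convention that each boundary component carries at least one marked point.

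For the value $r=0$ I would apply Proposition~\ref{propgentle2}(1) with $s=p+1$. Since $p\geq 1$ we have $s\geq 2$, and the proposition then presents $\Lambda'_0(p+1,0)$ itself as a surface algebra on the torus with one boundary component, carrying $s-1=p$ marked points, with $\gcd(\Lambda)=0$. This realizes $r=0$ for every admissible $p$.

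For a value $r$ with $1\leq r\leq \frac{p+1}{2}$ I would apply Proposition~\ref{propgentle2}(2) with $s=p+2$ and internal parameter $r'=r-1$. Proposition~\ref{propgentle2}(2) then produces a surface algebra, derived equivalent to $\Lambda_0(p+2,r-1)$, on the torus with one boundary component having $s-2=p$ marked points and invariant $\gcd(\Lambda)=r'+1=r$, which is exactly what is wanted. It remains only to check that the choice is admissible, i.e.\ that $s\geq 3$ and $0\leq r'\leq[\frac{s}{2}]-1$. Here $s=p+2\geq 3$ and $r'=r-1\geq 0$ are immediate, so everything hinges on the last inequality $r-1\leq[\frac{p+2}{2}]-1$, equivalently $r\leq[\frac{p+2}{2}]$.

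This single inequality is the only real point, and it is exactly where the hypothesis $r\leq\frac{p+1}{2}$ enters. Since $r$ is an integer, $r\leq\frac{p+1}{2}$ is the same as $r\leq\lfloor\frac{p+1}{2}\rfloor$, and I would finish with the elementary observation $\lfloor\frac{p+1}{2}\rfloor\leq\lfloor\frac{p+2}{2}\rfloor=[\frac{p+2}{2}]$, valid for every integer $p$ (the two floors coincide when $p$ is odd and differ by $1$ when $p$ is even). Assembling the two cases yields, for each $p\geq 1$ and each integer $r$ in the prescribed range, a graded triangulation whose surface algebra satisfies $\gcd(\Lambda)=r$, which proves the corollary.
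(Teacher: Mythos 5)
Your proposal is correct and follows exactly the route the paper intends: the paper states the corollary as an immediate consequence of Proposition~\ref{propgentle2} without writing out the parameter choices, and your argument simply makes that deduction explicit (taking $s=p+1$ in part (1) for $r=0$, and $s=p+2$, $r'=r-1$ in part (2) for $r\geq 1$, with the floor inequality $\lfloor\frac{p+1}{2}\rfloor\leq\lfloor\frac{p+2}{2}\rfloor$ verifying admissibility). Nothing further is needed.
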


\end{document}